\newtheorem{theorem}{Theorem}[section]
\newtheorem{lemma}[theorem]{Lemma}
\newtheorem{definition}[theorem]{Definition}
\newtheorem{proposition}[theorem]{Proposition}
\newtheorem{remark}[theorem]{Remark}
\newtheorem{corollary}[theorem]{Corollary}
\newcommand{\cali}[1]{\mathscr{#1}}
\numberwithin{equation}{section}
\newcommand{\supp}{{\rm supp}}
\newcommand{\bif}{{\rm Bif}}
\newcommand{\Hc}{\cali{H}}
\newcommand{\Jc}{\cali{J}}
\newcommand{\Pc}{\cali{P}}
\newcommand{\Uc}{\cali{U}}
\newcommand{\Db}{\mathbb{D}}
\newcommand{\Pb}{\mathbb{P}}
\newcommand{\Cb}{\mathbb{C}}
\newcommand{\C}{\mathbb{C}}
\newcommand{\Nb}{\mathbb{N}}
\newcommand{\Zb}{\mathbb{Z}}
\newcommand{\Rb}{\mathbb{R}}
\author{Johan Taflin}
\title{Blenders near polynomial product maps of $\Cb^2$}
\begin{document}
\maketitle

\begin{abstract}
In this paper we show that if $p$ is a bifurcating polynomial then the product map $(z,w)\mapsto(p(z),q(w))$ can be approximated by polynomial skew products possessing special dynamical objects called \textit{blenders}. Moreover, these objects can be chosen to be of two types: \textit{repelling} or \textit{saddle}. As a consequence, such a product map belongs to the closure of the interior of two different sets: the bifurcation locus of $\Hc_d(\Pb^2)$ and the set of endomorphisms having an attracting set of non-empty interior.

Similar techniques also give the first example of an attractor with non-empty interior or of a saddle hyperbolic set which is robustly contained in the small Julia set and whose unstable manifolds are all dense in $\Pb^2.$

In an independent part, we use perturbations of Hénon maps to obtain examples of attracting sets containing repelling points and also of quasi-attractors which are not attracting sets.
\end{abstract}
\section{Introduction}
The idea of \textit{blenders} was introduced by Bonatti and D\'iaz in \cite{bonatti-diaz-blender} to obtain robustly transitive diffeomorphisms which are not hyperbolic. Since then, blenders have become an important tool in smooth dynamics especially to build examples exhibiting new phenomena. Although there is no precise definition of a blender $\Lambda$ its main properties are that it persists under small $C^1$ perturbations and its stable set $W_\Lambda^s$ (or unstable set) intersects an open family of submanifolds of codimension strictly smaller than the topological dimension of $W_\Lambda^s,$ i.e. these robust intersections don't have a topological origin. We refer to \cite{beyond}, \cite{berger-crovisier-pujals-blender} and \cite{whatis-blender} for introductions to the subject.

Recently, blenders (called of repelling type in what follows) were introduced in complex dynamics by Dujardin \cite{dujardin-bif} (see also \cite{biebler-blender}) in order to prove that the bifurcation locus as defined in \cite{bbd-bif} has non-empty interior in the family $\Hc_d(\Pb^k)$ of holomorphic endomorphisms of degree $d$ of $\Pb^k$ for all $d\geq2$ and $k\geq2,$ i.e. there exist robust bifurcations in $\Hc_d(\Pb^k).$ This contrasts with the case of one complex variable since the classical results of Ma\~n\'e, Sad and Sullivan \cite{mss-rational} and Lyubich \cite{lyubich-bif} imply that the bifurcation locus has empty interior in any family of rational maps of $\Pb^1,$ i.e. the stable set is always an open and dense subset.

To obtain such blenders, Dujardin considers some perturbations of product maps of $\Cb^2$ of the form $(z,w)\mapsto(p(z),q(w))$ where $q(w)=w^d+\kappa$ with $\kappa$ large and $p$ belonging to a specific subset of the bifurcation locus $\bif(\Pc_d)$ of the family $\Pc_d$ of all polynomials of degree $d.$ Our main result is that these assumptions can be reduced to $p\in\bif(\Pc_d),$ i.e. blenders always exist near bifurcations of product maps.
\begin{theorem}\label{th-main}
Let $d\geq2.$ If $p$ and $q$ are two elements of $\Pc_d$ such that $p\in\bif(\Pc_d)$ then the map $(p,q)\in\Hc_d(\Pb^2)$ can be approximated by polynomial skew products having an iterate with a blender of repelling type (resp. of saddle type).
\end{theorem}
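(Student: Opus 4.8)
The plan is to reduce, by one--variable theory, to the case where $p$ has a Misiurewicz relation, and then to realise the blender as a ``fibred horseshoe'' verifying the covering criterion of \cite{bonatti-diaz-blender}, extending the scheme of \cite{dujardin-bif} and \cite{biebler-blender}. First I would invoke the theory of Ma\~n\'e--Sad--Sullivan and Lyubich \cite{mss-rational}, \cite{lyubich-bif}: since $p\in\bif(\Pc_d)$ some critical point of $p$ is active, and by the usual Montel argument one finds an arbitrarily small perturbation of $p$, still in $\bif(\Pc_d)$, having a critical point $c$ with $p^{N}(c)=a$ for some repelling periodic point $a$. Leaving $q$ untouched, $(p,q)$ moves by as little as we wish in $\Hc_d(\Pb^{2})$, so I may assume $p$ itself has this relation; after passing to an iterate (hence to $F^{n}$ in the end) I take $a$ to be a repelling fixed point with multiplier of modulus as large as convenient. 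The gain is that $p(c)\in J_p$, so the critical ``fold'' of $p$ now occurs on the Julia set rather than deep in the Fatou set, and it is exactly this feature — absent for stable $p$ — that upgrades the forthcoming construction from a mere hyperbolic repeller to a genuine blender.

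Next I build the model configuration for the product map. Around the orbit of $a$ I choose an open set $V\Subset\Cb$ with $p^{m}(V)\supset V$ whose maximal invariant set under $p^{m}$ is an expanding horseshoe, arranged so that the postcritical segment $c,p(c),\dots,p^{N}(c)=a$ passes through $V$; then a suitable branch of an iterate of $p$ covers $V$ with an extra multiplicity coming from the fold at $c$. Independently, since $\deg q=d\ge 2$, some iterate $q^{\ell}$ carries an expanding horseshoe in $J_q$: disjoint topological disks $D_1,D_2\subset\Cb$ with $q^{\ell}(D_j)\supset D_1\cup D_2$ and uniform expansion. This holds for \emph{every} $q\in\Pc_d$ and is all that is extracted from $q$. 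For $(p,q)$ the product of these two pictures is a hyperbolic repeller fibred over the base horseshoe; by itself it is not a blender, but together with the critical folding over $V$ it already exhibits the combinatorial pattern of a ``$2$-covered'' open set equipped with a horizontal cone field.

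I then perturb $(p,q)$ to a genuine polynomial skew product $F=(p(z),\tilde q(z,w))$ with $\tilde q(z,\cdot)$ close to $q$, chosen so that the fibred horseshoe — now genuinely tilted off the product — together with the critical folding of the base produces an open set $\Uc\subset\Pb^{2}$, a horizontal cone field $\Cc$, and finitely many inverse branches of an iterate $F^{n}$ on $\Uc$ whose images are tangent to $\Cc$, cover $\overline{\Uc}$, and do so with multiplicity at least $2$. This is precisely the hypothesis of the covering lemma of \cite{bonatti-diaz-blender}; being a $C^{1}$-open condition it survives perturbations of $F$ inside the polynomial skew products, so $F^{n}$ has a repelling blender, with $F$ arbitrarily close to $(p,q)$. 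For the \emph{saddle} version one repeats the argument with the expanding fibre horseshoe of $q$ replaced by a \emph{contracting} fibred iterated function system with overlap: such a system is extracted from the dynamics of $q$ near its critical point, or, when $q$ has a non-repelling cycle, from an attracting cycle obtained by a small perturbation of $q$ inside $\Pc_d$; the resulting invariant set is then partially hyperbolic — expanding along the base, contracting along the fibres — hence a saddle, and the same covering estimate gives a blender of saddle type.

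The main obstacle is the verification in the last step: establishing the covering/overlap property with \emph{no} hyperbolicity assumed on $q$ and only $p\in\bif(\Pc_d)$. One must locate, uniformly and compatibly with a single iterate of $F$, the base horseshoe near the (possibly highly recurrent) orbit of $a$, the fibre system over it, and the critical fold, and then quantify how the — non-robust — Misiurewicz relation of $p$ still yields a robust blender: the point is that only the open condition that the postcritical set meet $W^{u}(\Lambda)$ is needed, not an exact landing on $\Lambda$, so small drift of the critical value under perturbation is harmless. Producing the contracting fibred IFS for an arbitrary $q$ in the saddle case, where no attracting cycle of $q$ need be available nearby, is the most technical point and is where the real work lies.
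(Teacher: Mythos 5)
There is a genuine gap, and it is structural rather than technical: your construction never produces the \emph{weak (central) direction} that every blender needs, and it attributes the blender mechanism to the wrong feature of $p.$ In your repelling construction the base dynamics is taken near a repelling point $a$ of $p$ whose multiplier you explicitly choose ``as large as convenient'', and the fibre dynamics is a uniformly expanding horseshoe of $q^{\ell}$; both directions are then strongly expanding, so the maximal invariant set is an ordinary Cantor repeller whose projection to the horizontal direction is a Cantor set of small dimension, and it cannot robustly meet an open family of vertical graphs (compare Proposition \ref{prop-inter-robuste}). The Bonatti--D\'iaz covering criterion you invoke requires finitely many inverse branches which are \emph{near-isometries} in the central direction and whose images overlap and cover an open set; finitely many strong contractions can never cover an open disc, and the $2$-to-$1$ fold at a critical point of $p$ is not the ``multiplicity at least $2$'' overlap of that criterion --- it is a one-time tangency of two branches, not a robust overlapping IFS. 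Relatedly, you misread where the hypothesis $p\in\bif(\Pc_d)$ enters: the paper does not use a Misiurewicz relation to prove Theorem \ref{th-main} (critical orbits only appear later, for Corollary \ref{coro-bif}). Instead, $p$ is approximated by a polynomial with a parabolic cycle of multiplier $e^{2i\pi/3}$; for nearby parameters the $l$-th return near this cycle has horizontal multiplier $\rho(\lambda_n)^{t_0l_n}$ which can be steered to modulus $1+\alpha_0$ or $1-\alpha_0,$ and this is precisely the near-isometric system required by Lemma \ref{le-triangle}. The coupling term $\alpha\beta_n w$ then converts the points of a repelling cycle of $q$ into the three translations $c_1,c_2,c_3$ (Lemma \ref{le-bon-point-rep}, using the cube root of unity), while the strong expansion is supplied by $q^{m_1l}$ near its repelling points (Lemma \ref{le-vois-rep}); Propositions \ref{prop-exi-rep} and \ref{prop-exi-selle} and the renormalisation computation of Theorem \ref{th-constru} then give the blender.

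The saddle case of your proposal fails for the same reason, at the point you yourself flag as ``the real work''. You propose to extract a contracting fibred IFS with overlap from $q$ (near its critical point, or from an attracting cycle of a perturbed $q$), but $q$ is arbitrary: if $q$ is $J$-stable and hyperbolic, small perturbations produce no new attracting cycles and no nearly neutral multipliers, so there is no source of \emph{weak} contraction in the fibre, and without weak contraction the covering/overlap property is again impossible with finitely many branches. In the paper the weak contraction also comes from $p$'s bifurcation (the side $|\rho(\lambda_n)|<1$ of the parabolic parameter), the strong expansion again from $q$ near its repelling cycle, and the extra strong stable direction is the non-invertibility itself; this is why the hypothesis is placed on $p$ and why no assumption at all is needed on $q.$ To repair your approach you would have to abandon the Misiurewicz reduction and the critical fold as the engine of the construction and reintroduce a nearly neutral cycle of $p$ (or some other mechanism producing horizontal multipliers arbitrarily close to $1$), at which point you are essentially reproducing the paper's argument.
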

We refer to Section \ref{sec-blender} for details about these two closely related notions of blenders. In this introduction, we just point out that in our definition a blender of repelling type (resp. saddle type) is contained in a repelling (resp. saddle) invariant hyperbolic set.

The main arguments are the following. First, we observe that some perturbations of the map $(z,w)\mapsto(z,q^l(w))$ have a blender if $l\geq1$ is large enough (see Proposition \ref{prop-exi-rep} and Proposition \ref{prop-exi-selle}). Then, using the fact that $p\in\bif(\Pc_d)$ can be approximated by polynomials with a parabolic cycle we show that the above perturbations can be realized as the $l$-th iterate of perturbations of $(z,w)\mapsto(p(z),q(w))$ (see Theorem \ref{th-constru}). As a consequence of this construction in the repelling case, we give a positive answer to a conjecture of Dujardin \cite[Conjecture 5.6]{dujardin-bif}.
\begin{corollary}\label{coro-bif}
The bifurcation locus of the family $\Pc_d\times\Pc_d$ is contained in the closure of the interior of the bifurcation locus in $\Hc_d(\Pb^2).$
\end{corollary}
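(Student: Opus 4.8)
The plan is to combine Theorem~\ref{th-main} (in its repelling form) with a description of the bifurcation locus of the product family and with the robustness of the bifurcations produced by repelling blenders.

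First I would identify $\bif(\Pc_d\times\Pc_d)$. Denote by $L$ the sum of the Lyapunov exponents of an endomorphism of $\Pb^2$ with respect to its measure of maximal entropy; the bifurcation current of a holomorphic family is $dd^cL$ and the bifurcation locus is its support (see \cite{bbd-bif}). For a product map $(p,q)$ the measure of maximal entropy on $\Pb^2$ is the product $\mu_p\otimes\mu_q$ of the two one-variable maximal measures, so its Lyapunov exponents are those of $p$ and of $q$; hence $L(p,q)=L(p)+L(q)$, where on the right $L$ is the usual one-variable Lyapunov function on $\Pc_d$. Applying $dd^c$ on $\Pc_d\times\Pc_d$ gives $T_{\mathrm{bif}}=\pi_1^*(dd^cL)+\pi_2^*(dd^cL)$ for the two projections $\pi_1,\pi_2\colon\Pc_d\times\Pc_d\to\Pc_d$, whence
\[
\bif(\Pc_d\times\Pc_d)=\bigl(\bif(\Pc_d)\times\Pc_d\bigr)\cup\bigl(\Pc_d\times\bif(\Pc_d)\bigr).
\]

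Now fix $(p_0,q_0)\in\bif(\Pc_d\times\Pc_d)$. The linear involution $\sigma\colon[z:w:t]\mapsto[w:z:t]$ conjugates $(p,q)$ to $(q,p)$, and $f\mapsto\sigma f\sigma^{-1}$ is an automorphism of $\Hc_d(\Pb^2)$ preserving $L$, hence $\bif(\Hc_d(\Pb^2))$ and its interior; by the displayed equality we may therefore assume $p_0\in\bif(\Pc_d)$. Theorem~\ref{th-main} then provides polynomial skew products $f_k\to(p_0,q_0)$ in $\Hc_d(\Pb^2)$ such that some iterate $f_k^{n_k}$ carries a blender of repelling type.

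Finally I would use that a map admitting an iterate with a blender of repelling type lies in $\mathrm{int}\,\bif(\Hc_d(\Pb^2))$: the blender sits in a repelling hyperbolic set, and both this set and the robust intersections with the postcritical set responsible for the bifurcation persist under all small perturbations in $\Hc_d(\Pb^2)$ — not only skew-product ones — by the very robustness of blenders (Section~\ref{sec-blender}, \cite{dujardin-bif}); moreover stability of $f$ is equivalent to stability of $f^{n}$ since $L(f^{n})=nL(f)$, so the robust bifurcation of $f_k^{n_k}$ forces $f_k\in\mathrm{int}\,\bif(\Hc_d(\Pb^2))$. Hence $(p_0,q_0)=\lim_k f_k$ belongs to the closure of $\mathrm{int}\,\bif(\Hc_d(\Pb^2))$, as claimed. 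The substantive input is Theorem~\ref{th-main}; the only point needing care here is that the robustness of a repelling-type blender genuinely holds in the full family $\Hc_d(\Pb^2)$ rather than merely among skew products, which is exactly what Section~\ref{sec-blender} is arranged to provide.
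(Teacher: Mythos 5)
Your reduction to the case $p_0\in\bif(\Pc_d)$ (product formula for the Lyapunov function plus the swap involution) is fine, and it even makes explicit a point the paper leaves implicit. The gap is in your final step: it is not true — and it is nowhere established in the paper — that merely admitting an iterate with a blender of repelling type places a map in $\mathrm{int}\,\bif(\Hc_d(\Pb^2))$. A blender only provides a robust \emph{mechanism}: by Proposition~\ref{prop-inter-robuste}, any vertical graph tangent to the cone field $C_{\delta_0}$ passing through the sets $H_j\times V_j^l$ must meet $\Lambda_f$, and this persists under perturbation. To get a bifurcation one still needs the postcritical set to actually supply such a graph, and the resulting intersection to be \emph{proper} in the sense of Proposition~\ref{prop-ouvert-de-bif}. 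Your phrase ``the robust intersections with the postcritical set responsible for the bifurcation persist'' assumes exactly what has to be proved: Theorem~\ref{th-main}, as stated, gives maps $f_k$ with blenders but says nothing about $P(f_k)$ meeting them.

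This is precisely the content of Section~\ref{sec-ouvert-bif}, which your proposal skips. There one works inside the one-parameter family $f_{n,\alpha}$, uses Lemma~\ref{le-para} to make $0$ repelling and postcritical for $p_{\lambda_n}$, uses Lemma~\ref{le-lambda} together with Proposition~\ref{prop-bbd} (or the stability dichotomy) in Lemma~\ref{le-point-rep-pc} to select a parameter $\alpha_0$ at which a component of $\cup_i f^i_{n,\alpha_0}(C(p_{\lambda_n})\times\Cb)$ passes through the blender away from $\Cb\times C(q)$, and then shows (via the cone-field estimates from the proof of Theorem~\ref{th-constru}) that its forward images contain a vertical graph tangent to $C_{\delta_{\alpha_0}}$ in some $H_{j,\alpha_0}\times V_j$. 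Properness of the intersection and the holomorphic motion of the ambient basic repeller $\widetilde\Lambda$ come from Proposition~\ref{prop-basic}; only then does Proposition~\ref{prop-ouvert-de-bif} yield an open set of bifurcations around $f_{n,\alpha_0}$. (The author points out in the introduction that a softer approach — essentially the one you sketch, cf.\ Remark~\ref{rk-cycle} — was a priori insufficient, which is why Dujardin's strategy is followed here.) The incidental claim that stability of $f$ and of $f^{n}$ are equivalent is acceptable, but it does not repair the missing postcritical-intersection argument.
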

On the other hand, blenders of saddle type give rise to invariant sets with non empty interior. Actually, their unstable set has non empty interior. As a consequence, we obtain the following result.
\begin{corollary}\label{coro-as}
The bifurcation locus of the family $\Pc_d\times\Pc_d$ is contained in the closure of the interior of the set of maps in $\Hc_d(\Pb^2)$ possessing a proper attracting set with non-empty interior.
\end{corollary}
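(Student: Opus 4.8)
The plan is to mirror the derivation of Corollary~\ref{coro-bif} from Theorem~\ref{th-main}, now using the saddle case. First I would recall the (standard) description of the bifurcation locus of a product family: since the Lyapunov function of $(p,q)$ is the sum of those of $p$ and of $q$ — equivalently, $J$-stability of the product is equivalent to $J$-stability of each factor — the bifurcation locus of $\Pc_d\times\Pc_d$ equals $(\bif(\Pc_d)\times\Pc_d)\cup(\Pc_d\times\bif(\Pc_d))$. Hence, after exchanging the two coordinates if necessary, it suffices to treat a pair $(p,q)$ with $p\in\bif(\Pc_d)$; and one checks directly that $(p,q)\in\Hc_d(\Pb^2)$. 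By Theorem~\ref{th-main} in the saddle case, such a $(p,q)$ is a limit of polynomial skew products $f=(p_0,q_0)$ such that some iterate $f^l$ has a saddle hyperbolic set $\Lambda$ containing a blender of saddle type; in particular, $\overline{W^u(\Lambda)}$ (the closure of the unstable set of $\Lambda$ for $f^l$) has non-empty interior.

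What I would extract from the construction behind Theorem~\ref{th-main} (via Theorem~\ref{th-constru}) is more precise information about $f$: these skew products are obtained by perturbing $(p,q)$ near a polynomial for which $p$ has a parabolic cycle, and the blender appears exactly when that cycle is turned into an \emph{attracting} cycle of $p_0$, over whose immediate basin $\Lambda$ and its local unstable manifolds are located. So I may assume $p_0$ has an attracting cycle with a trapping neighbourhood $D=\bigcup_j D_j\subset\Pb^1$, $p_0(\overline D)\Subset D$, and $\Lambda\subset\{z\in D\}$. Then I would take as trapping region the ``tube'' $U$ over $D$ — the preimage of $D$ by the rational projection $\Pb^2\dashrightarrow\Pb^1$, $(z,w)\mapsto z$ — together with a small neighbourhood of the super-attracting fixed point of $f$ at the relevant point of the line at infinity. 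Since $f$ is a skew product, $p_0(\overline D)\Subset D$, and each fibre map has degree $d\ge2$ so that $w=\infty$ is super-attracting fibrewise, one gets $f(\overline U)\Subset U$; thus $\Ac:=\bigcap_{n\ge0}f^n(\overline U)$ is an attracting set, which is proper because $\overline U\ne\Pb^2$.

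It remains to see that $\Ac$ has non-empty interior, and for this I would check $\overline{W^u(\Lambda)}\subset\Ac$. The tube $U$ is forward invariant under $f$ and contains $\Lambda$; hence any point of $W^u(\Lambda)$ has a backward $f^l$-orbit converging to $\Lambda\subset U$, so some term of that orbit lies in $U$, and applying $f$ (which maps $U$ into itself) the appropriate number of times shows that the point itself lies in $U$. Combined with the identity $f^l(W^u(\Lambda))=W^u(\Lambda)$ this yields $W^u(\Lambda)\subset\bigcap_{k\ge0}f^k(U)=\Ac$, and since $\Ac$ is closed, $\overline{W^u(\Lambda)}\subset\Ac$; as the left-hand side has non-empty interior, so does $\Ac$. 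Finally, all of this is robust: having an attracting cycle is an open condition on $p_0$ (so the trapping region persists), and persistence of the blender together with non-empty-interiority of its unstable set is the very definition of a blender. Thus there is an open subset of $\Hc_d(\Pb^2)$, arbitrarily close to $(p,q)$, consisting of maps with a proper attracting set of non-empty interior; together with the reduction of the first paragraph this proves the corollary.

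The step I expect to be the real obstacle — and the reason this is a corollary of the \emph{construction} rather than of the bare statement of Theorem~\ref{th-main} — is the second paragraph: one must know that the skew products produced by the theorem can be arranged so that the first coordinate $p_0$ has an attracting cycle whose basin contains the fibres carrying the saddle blender and its unstable set, so that a trapping region genuinely captures $\overline{W^u(\Lambda)}$. Everything else (invariance of trapping regions and of unstable sets, openness of the relevant conditions, the description of $\bif(\Pc_d\times\Pc_d)$) is soft.
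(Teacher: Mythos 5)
Your proposal is correct and follows essentially the same route as the paper: in the saddle case of Theorem \ref{th-constru} the first coordinate $p_{\lambda_n}$ has an attracting cycle, so the associated (cycle of) vertical line(s) gives a trapping region $U$ for the slightly perturbed skew product, the blender's defining covering property $\overline Z\subset f^{m_1l_n}(Z)$ forces the open set $Z$ (equivalently an open piece of the unstable set, as you argue) into the attracting set, and both the trapping condition and this covering condition are open, giving robustness. The only differences are cosmetic: the paper works with the iterate and concludes $Z\subset\bigcap_k f^{km_1l_n}(U)$ directly instead of passing through $W^u(\Lambda)$, and it invokes an abstract trapping region rather than your explicit tube over $D$ plus a neighbourhood of $[0:1:0]$.
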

Recall that an \textit{attracting set} $A$ of an endomorphism $f$ is a compact set of the form $A=\cap_{n\geq0}f^n(U)$ where $U$ is an open set (called a \textit{trapping region}) such that $\overline{f(U)}\subset U.$ Such a subset of $\Pb^2$ is called \textit{proper} if $A\neq\Pb^2.$

To the best of our knowledge, no previous example of proper attracting set with non-empty interior was known. The result above says that they are abundant. Moreover, these attracting sets are ``near to collapse''. An interesting question is to understand which part of the bifurcation locus in $\Hc_d(\Pb^2)$ can be approximated by maps with such ``collapsing'' attracting sets and whether or not these sets have non-empty interior (when they are not finite). This can be related to the Lyapunov exponents of the equilibrium measures on attracting sets introduced in \cite{d-attractor} and \cite{t-attractor}. Observe that in one variable the full bifurcation locus can be characterized by the fact that the number of attracting cycles is not constant in any neighborhood of a bifurcating parameter.

The technique to obtain Corollary \ref{coro-as} is elementary and flexible. It is possible to adapt it to produce examples with additional properties. In particular, recall that an \textit{attractor} is an attracting set which is topologically transitive. The following result gives an example of attractor whose interior is uniformly wide in some sense.
\begin{theorem}\label{th-att}
There exists an endomorphism of $\Pb^2$ which has a proper attractor containing an algebraic curve in its interior.
\end{theorem}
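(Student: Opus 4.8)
The plan is to refine the construction used for Corollary~\ref{coro-as}, choosing the ingredients so that the resulting attracting set is topologically transitive and contains, in its interior, the invariant fibre over the periodic point around which the blender is built. Fix $d\geq 2$ and start, as in the proof of Theorem~\ref{th-constru}, from a product $(p_0,q_0)$ with $q_0(w)=w^d+\kappa$ for $\kappa$ large and $p_0$ having a parabolic fixed point $z_0$; Theorem~\ref{th-constru} then furnishes a polynomial skew product $g(z,w)=(p(z),q(z,w))$, arbitrarily close to $(p_0,q_0)$ in $\Hc_d(\Pb^2)$, such that some iterate carries a saddle blender $\Lambda$ (obtained via Proposition~\ref{prop-exi-selle}). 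I will keep $p(z_0)=z_0$ throughout the perturbation, so that the projective line $C:=\overline{\{z=z_0\}}$ — which meets the line at infinity only at the exceptional point $[0:1:0]$ — is $g$-invariant, and I will arrange that $\Lambda$ projects onto the fibre Julia set over $z_0$, sits in a thin vertical slab $\{|z-z_0|<\epsilon\}$ around $C$, and has the fibre direction as its expanding direction (the base direction being the contracting one, since $z_0$ lies in an attracting petal).

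Exactly as in the proof of Corollary~\ref{coro-as}, one builds for $g$ a trapping region $U$ — adapted to the attracting petal of $z_0$ in the base and to the escape region $\{|w|\text{ large}\}$ in the fibres — containing the $g$-orbit of $\Lambda$, and one sets $A:=\bigcap_{n\geq 0}g^n(U)$. Then $A$ is a proper attracting set: it is confined to the vertical cylinder $\{|z-z_0|\leq\epsilon\}\cup\{[0:1:0]\}$, hence $A\neq\Pb^2$, and it contains the closure of the unstable set of $\Lambda$, which has non-empty interior because $\Lambda$ is a saddle blender. The inclusion $C\subset A$ is then a preimage argument: every point $(z_0,w)$ of the affine fibre, and $[0:1:0]$ as well, admits a full backward orbit converging to $\Lambda$ — use the branch of $p^{-1}$ fixing $z_0$ in the base together with the equidistribution of the $q(z_0,\cdot)$-preimages of $w$ towards the (repelling) fibre Julia set on which $\Lambda$ projects — so that $(z_0,w)\in g^n(U)$ for every $n$, and likewise for all nearby points.

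To upgrade this to $C\subset\mathrm{int}(A)$ one uses the defining covering property of the saddle blender: the unstable manifolds of $\Lambda$ extend along the fibre direction while their base coordinate contracts to $z_0$, so they sweep out the whole affine fibre, and the blender mechanism forces their union to be transversally thick in a full neighbourhood of $C\cap\Cb^2$ inside the slab; near $[0:1:0]$ one concludes from the local super-attracting normal form of $g$ at the exceptional point, checking that the unstable lamination still fills a neighbourhood of it. This produces an open neighbourhood of $C$ inside $A$. Finally, for transitivity one arranges — e.g.\ by taking the exponent $l$ large in Proposition~\ref{prop-exi-selle}, i.e.\ working with a high iterate of $q_0$ — that the hyperbolic set carrying $\Lambda$ is topologically mixing; since $A$ is the closure of the unstable set of $\Lambda$ and, for a transitive hyperbolic set, every unstable manifold is dense in that unstable set, the expansion along the unstable lamination together with the local product structure near $\Lambda$ yields a dense orbit in $A$. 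Hence $A$ is a proper attractor containing the algebraic curve $C$ in its interior.

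The main obstacle is the passage from $C\subset A$ to $C\subset\mathrm{int}(A)$: one must quantify how thickly the unstable lamination of $\Lambda$ fills a tube around the entire curve $C$, uniformly up to the exceptional point $[0:1:0]$ where the normal dynamics is super-attracting — precisely where a genuine blender, and not a mere saddle, is indispensable, and where one must lean on the estimates underlying Proposition~\ref{prop-exi-selle}. Establishing transitivity of $g$ on the fractal, full-dimensional set $A$ is a second delicate point, since $A$ is not a classical hyperbolic attractor.
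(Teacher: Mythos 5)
There is a genuine gap, and it lies exactly at the two points you flag as ``delicate'': within the class of maps produced by Theorem~\ref{th-constru} your plan cannot be completed. First, the perturbation that creates the blender is $f_{n,\alpha}(z,w)=(p_{\lambda_n}(z)+\alpha\beta_n w,q(w))$; the coupling term $\alpha\beta_n w$ is indispensable (for $\alpha=0$ there is no blender) and it destroys the invariance of the vertical line $\{z=z_0\}$, so your curve $C$ is simply not invariant for the maps you propose to use, and there is no reason for $C\subset A$, let alone $C\subset\mathrm{int}(A)$. Second, and more fundamentally, any such polynomial skew product has a (super)attracting fixed point at $[0:1:0]$, which lies on $C=\overline{\{z=z_0\}}$ and inside the trapping region; the paper points this out right after the proof of Corollary~\ref{coro-as}: these examples are \emph{never} transitive, because an attracting set containing an attracting fixed point together with a nontrivial open piece cannot carry a dense orbit. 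No choice of a large iterate $l$ in Proposition~\ref{prop-exi-selle} can repair this, since the obstruction sits at infinity in the fibre direction, not in the blender. Your argument for $C\subset\mathrm{int}(A)$ (``the unstable lamination sweeps out a tube around $C$ up to $[0:1:0]$'') is also only asserted, and your transitivity argument presupposes both that $A$ equals the closure of $W^u_\Lambda$ and that mixing of the hyperbolic set propagates to the full-dimensional set $A$, neither of which is justified.

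The paper's proof is structured precisely to circumvent these obstructions, and it does so by leaving the skew-product class. It takes $q(w)=w^4$, the automorphism $\Psi$ of $\Pb^2$ induced by $\psi(w)=\frac{iw+1}{w+i}$, and studies $f=F_\eta\circ\Psi\circ G_{\alpha,\eta}^N$, where $G_{\alpha,\eta}$ carries the saddle-blender-type covering of Lemma~\ref{le-att1}. Membership of the line in the interior of $A$ is not obtained from density of an unstable lamination but from an open covering statement: Lemmas~\ref{le-att1} and~\ref{le-att2} give $X\cup Z_\alpha\subset f(Z_\alpha)$ for the open blender block $Z_\alpha$, and since $Z_\alpha\subset f(Z_\alpha)$ forces $Z_\alpha\subset A$, the open set $f(Z_\alpha)\subset A$ already contains the whole line $X$. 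Transitivity is obtained by a global algebraic mechanism, not a hyperbolic one: $f$ preserves the pencil of lines through $[1:0:0]$ and acts on it by $q^{\widetilde l}\circ\psi\circ q^{lN}$, whose critical orbits land on the repelling fixed point $1$, so this rational map has empty Fatou set and is transitive on $\Pb^1$; the Jonsson--Weickert argument then gives that $f_{|A}$ is topologically mixing. If you want to salvage your outline, you must introduce an analogous ingredient (an automorphism or some other device breaking the skew-product structure) to remove the attracting point at infinity and to supply a checkable source of transitivity.
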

An alternative way to obtain attracting sets with non empty interior is to perturb (compositions of) Hénon maps. Indeed, an easy observation, which apparently is not present in the literature, is that all the dynamics of a Hénon map takes place in a proper attracting set of $\Pb^2.$ Hence, small perturbations give endomorphisms of $\Pb^2$ with an attracting set which inherits several properties of the original Hénon map (see also Remark \ref{rk-henon}). Although these examples cannot be transitive, they exhibit other interesting phenomena. The first one is simply the existence of attracting set possessing repelling cycles. Another issue is about quasi-attractors. A \textit{quasi-attractor} is an infinite decreasing intersection of attracting sets. These objects play an important role in dynamics and they have been studied in the complex setting by \cite{fw-attractor} and  \cite{t-attractor}.
\begin{theorem}\label{th-pertu}
There exist proper attracting sets of $\Pb^2$ with infinitely many repelling cycles. Moreover, there exist quasi-attractors which are not attracting sets.
\end{theorem}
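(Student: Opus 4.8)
The plan is to produce both statements by perturbing, inside $\Hc_D(\Pb^2)$ for a large auxiliary degree $D$, a polynomial automorphism of $\Cb^2$ whose dynamics already carries infinitely many repelling periodic orbits inside a proper attracting set. Recall --- this is the content of Remark~\ref{rk-henon} together with the classical H\'enon filtration --- that a polynomial automorphism $g$ of $\Cb^2$ of H\'enon type of degree $d$, viewed as a birational self-map of $\Pb^2$, admits a \emph{trapping region}: with $V_R^-=\{|x|>R,\ |x|>|y|\}$ and $\Omega=\Pb^2\setminus\overline{V_R^-}$, for $R$ large one has $\Omega\neq\Pb^2$, $g$ holomorphic on $\overline\Omega$, and $\overline{g(\Omega)}\Subset\Omega$. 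Hence $A_g:=\bigcap_{n\ge0}g^n(\Omega)$ is a proper attracting set of $g$, and it contains every periodic point of $g$ lying in $\Cb^2$, since any such point lies, together with its periodic orbit, in the compact core $K(g)=K^+(g)\cap K^-(g)\Subset\Omega$. For the choice of $g$, take $g:=h^{-1}$ where $h$ is a polynomial automorphism of $\Cb^2$ of H\'enon type with infinitely many attracting periodic orbits (such $h$ exists by Buzzard's theorem); then $g$ has infinitely many \emph{repelling} periodic orbits $\{c_k\}_{k\ge1}$, all contained in $K(g)\Subset\Omega$.

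To pass to an endomorphism, write $g=(Q_1,Q_2)$ with $\deg Q_i\le d$, fix $D>d$, and set $F_\varepsilon:=(Q_1+\varepsilon x^{D},\,Q_2+\varepsilon y^{D})$. Its degree-$D$ part is $(\varepsilon x^{D},\varepsilon y^{D})$, which has no common zero on $L_\infty$, so $F_\varepsilon$ extends to a holomorphic endomorphism of $\Pb^2$ of degree $D$; moreover $F_\varepsilon\to g$ in the $C^\infty$ topology on compact subsets of $\Cb^2$. Consequently, for $\varepsilon$ small enough $\Omega$ remains a trapping region for $F_\varepsilon$ (the condition $\overline{F(\Omega)}\Subset\Omega$ is open), $F_\varepsilon$ is injective on a neighbourhood of $K(g)$, and $A_{F_\varepsilon}:=\bigcap_{n\ge0}F_\varepsilon^{\,n}(\Omega)\subsetneq\Pb^2$ is a proper attracting set of the endomorphism $F_\varepsilon$. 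It then suffices to check that, for a suitable $F:=F_\varepsilon$, the set $A_F$ still contains infinitely many repelling cycles; their orbits then lie in $A_F$ automatically, being periodic and contained in $\Omega$.

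The step I expect to be the main obstacle is exactly this persistence of infinitely many repelling cycles. A fixed perturbation cannot preserve the orbits $c_k$ one by one, since in Buzzard's (and Newhouse's) construction their multipliers approach the unit circle; the infinitude must be read off from a single, $C^2$-robust configuration rather than from the $c_k$ individually. That configuration is a hyperbolic saddle $p$ of $g$ together with finite arcs of $W^s(p)$ and $W^u(p)$ realising a persistent (thick) homoclinic tangency --- a compactly supported object inside $\Omega$, hence preserved for every map $C^2$-close to $g$ on $\overline\Omega$, in particular for the $F_\varepsilon$ with $\varepsilon$ small and for their perturbations in $\Hc_D(\Pb^2)$. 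Running the Newhouse--Buzzard argument inside this open family of endomorphisms is legitimate: near the relevant orbits each such map is an injective local biholomorphism, so only local inverse branches and the global return map --- all taking place in $\Omega$ --- are used. For the area-contracting $h$ this yields infinitely many sinks; by time reversal, for the area-expanding $g=h^{-1}$ and its endomorphic perturbations it yields infinitely many repelling periodic orbits $\{\tilde c_k\}$ clustering near the tangency, hence inside $\Omega$, hence inside the attracting set. Fixing one such endomorphism $F$ proves the first assertion. (The delicate point to verify is that the holomorphic rigidity of tangencies exploited by Buzzard makes the configuration --- and the conclusion --- survive for a specific $F$, not merely for a residual set of parameters.)

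For the second assertion, keep this $F$ and its repelling orbits $\{\tilde c_k\}\subset\Omega$. They accumulate on a compact $F$-invariant set $Z\subset A_F$ built from the persisting saddle $\tilde p$ and its tangential homoclinic orbit; pick $\zeta\in Z$ with $\zeta=\lim_j\tilde c_{k_j}$. The idea is to construct a decreasing sequence of trapping regions $U_1\supset U_2\supset\cdots$ with $Z\subset U_n$ and $\tilde c_j\notin U_n$ for $j\le n$, so that $A_n:=\bigcap_{m\ge0}F^m(U_n)$ is a decreasing sequence of attracting sets, $Q:=\bigcap_n A_n$ is a quasi-attractor with $\zeta\in Q$ (its $F$-backward orbit inside $\Omega$ stays in every $U_n$) but $\tilde c_k\notin Q$ for all $k$. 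Then $Q$ is \emph{not} an attracting set: if $Q=\bigcap_m F^m(W)$ with $W$ open and $\overline{F(W)}\subset W$, then $\zeta\in W$ forces $\tilde c_k\in W$ for all large $k$; since $F(W)\subset W$, the finite periodic orbit of $\tilde c_k$ lies in $W$, whence $\tilde c_k\in\bigcap_m F^m(W)=Q$, a contradiction. The genuinely delicate point here --- and the reason this is not a routine matter of ``deleting balls'' from $\Omega$ --- is that $F$ is not globally injective: a point near a source has extra $F$-preimages (near the super-attracting point $p^+$), so a naive excision need not produce a trapping region. One therefore builds the $U_n$ from the self-similar renormalisation structure near the thick tangency, which provides genuine nested trapping tongues separating $Z$ from $\tilde c_1,\dots,\tilde c_n$; this is where the bulk of the remaining work lies, the substantive input again being the robust tangency of the first part.
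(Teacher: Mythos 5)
Your first assertion is handled essentially as in the paper: view a composition of H\'enon maps as a rational map of $\Pb^2$, observe that for $R$ large the complement of a neighbourhood of $\overline{V^-}$ is a trapping region which survives both the passage to a holomorphic endomorphism and all small perturbations, and produce the infinitely many repelling cycles not by following individual orbits (as you rightly note, that is impossible) but by re-running the Gavosto--Buzzard mechanism at the persistent homoclinic tangency with a volume-\emph{increasing} perturbation. Two remarks: the initial detour through $g=h^{-1}$ with infinitely many sources is superfluous (the paper starts directly from Buzzard's volume-preserving map with the persistent tangency), and your worry about needing the conclusion ``for a specific $F$ rather than a residual set'' is unfounded: the trapping region persists on a whole neighbourhood of $F_\epsilon$ in $\Hc_d(\Pb^2)$, so it suffices to pick \emph{any} map with infinitely many repelling cycles arbitrarily close to $F_\epsilon$, which is exactly the level of generality at which the paper invokes \cite{buzzard} and \cite{gavosto}.

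The genuine gap is in the second assertion. You correctly identify non-injectivity as the obstruction to ``deleting balls'' around the repelling cycles, but you then defer the whole construction of the nested trapping regions to an unspecified ``self-similar renormalisation structure'' providing ``trapping tongues''; nothing is proved, and this is precisely where the content lies. The paper's resolution is much simpler and is built into the choice of perturbation: $f_\epsilon=h_\epsilon^-\circ h_\epsilon^+$ with $h^\pm_\epsilon(z,w)=(w+a_\epsilon z^d,\dots)$ is arranged to be injective on $f_\epsilon^{-1}(V)\cap U$, where $V$ is the bidisc containing the tangency and $U$ the trapping region, and this injectivity persists for the nearby map $f$ carrying the repelling cycles. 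With that, the naive excision does work: extra preimages of a point of $V$ lie outside $U$ and are irrelevant to the trapping condition, so one chooses disjoint neighbourhoods $B_n\subset V$ of the cycles $C_n$ with $\overline{B_n}\subset f(B_n)$, and $U_{n+1}:=U_n\setminus\overline{B_n}$ is again a trapping region, because a point of $\overline{B_n}$ already has a preimage in $B_n$ and injectivity on $f^{-1}(V)\cap U$ forbids a second preimage in $U_n\setminus\overline{B_n}$. The strictly decreasing attracting sets $A_n=\bigcap_{k\geq0}f^k(U_n)$ then yield the quasi-attractor $A_\infty=\bigcap_n A_n$, which is not an attracting set since any trapping region of $A_\infty$ contains $A_n$ for large $n$, hence infinitely many cycles $C_m\not\subset A_\infty$ (your accumulation-point version of this contradiction is fine). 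So the missing idea is not a renormalisation argument but the observation that the perturbation can, and must, be chosen so that preimage-uniqueness holds on the part of the trapping region mapping into $V$; for your particular choice $F_\epsilon=(Q_1+\epsilon x^D,Q_2+\epsilon y^D)$ this property is not checked (its extra preimages need not avoid $\Omega$), and without it, or some verified substitute, the second half of your argument is incomplete.
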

Actually, these examples lead to endomorphisms of $\Pb^2$ with uncountably many quasi-attractors (see Remark \ref{rk-uncountable}). Notice that in \cite{t-attractor} it was shown that a holomorphic endomorphism of $\Pb^k$ has at most countably many quasi-attractors which are minimal (with respect to the inclusion). The same paper established that if $(A_n)_{n\geq1}$ is a decreasing sequence of attracting sets in $\Pb^2$ such that $A:=\cap_{n\geq1}A_n$ is not an attracting set then the Hausdorff dimension of each $A_n$ has to be greater or equal to $3.$ In the result above, the sets $A_n$ have non-empty interior and thus have maximal Hausdorff dimension $4.$

Although recent developments partially filled this lack (see e.g. \cite{dujardin-non-laminar}, \cite{wandering}, \cite{bianchi-t-desboves}), complex dynamics in several variables misses well-understood and interesting examples. It is likely that, as in smooth dynamics, the study of all the different types of blenders in $\Pb^k$ can help to increase this set of examples and could also become a general mechanism which explains some specific phenomena. In that direction, our last result below gives an open set of endomorphisms of $\Pb^2$ in which each map displays unusual phenomena. The mechanism behind it is a robust ``heterodimensional cycle'' between two blenders, one of repelling type and one of saddle type.
\begin{theorem}\label{th-cycle}
There exist $d\geq2$ and an open set $\Omega\subset\Hc_d(\Pb^2)$ which contains skew products such that $\Omega\subset\bif(\Hc_d(\Pb^2))$ and each $f$ in $\Omega$ possesses a hyperbolic basic set of saddle type $\Lambda$ with positive entropy whose unstable set $W^u_{\Lambda}$ is a Zariski open set of $\Pb^2$ and 
\begin{itemize}
\item for each $\widehat x$ in the natural extension of $\Lambda,$ the unstable manifold $W^u_{\widehat x}$ is dense in $\Pb^2,$
\item for each $x\in\Lambda,$ the stable manifold $W^s_x$ is contained in the small Julia set $\Jc_2(f).$
\end{itemize}
In particular, we have $\Lambda\subset\Jc_2(f).$
\end{theorem}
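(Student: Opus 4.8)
The plan is to build, near a well-chosen product map, a polynomial skew product $f_0$ carrying \emph{simultaneously} a repelling blender and a saddle blender arranged into a robust heterodimensional cycle, to let $\Omega$ be a small neighbourhood of $f_0$, and to propagate the four required properties using the two blender mechanisms together with the inclination lemma and the classical covering property of holomorphic endomorphisms of $\Pb^2$. Concretely, fix $d$ large and a product map $(p,q)$ with $p\in\bif(\Pc_d)$, and run the argument behind Theorem~\ref{th-constru} twice, on two essentially independent parabolic cycles of a polynomial approximating $p$. On the first, Proposition~\ref{prop-exi-rep} produces, as a high iterate of a perturbation of $(z,w)\mapsto(p(z),q(w))$, a repelling blender $R$ of positive entropy contained in $\Jc_2$, together with its open family $\Vc$ of holomorphic disks each of which robustly meets $W^s(R)$. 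On the second, Proposition~\ref{prop-exi-selle} produces a saddle blender $\Lambda$ of positive entropy whose unstable set $W^u(\Lambda)$ has non-empty interior $W$. Taking $d$ and the two iterate lengths large enough, both constructions coexist in a single skew product $f_0$; the decisive point of the construction is to place the two blending regions so that $W$ \emph{contains} a disk of $\Vc$. Since blenders persist under perturbations and hyperbolic basic sets are structurally stable, there is an open set $\Omega\ni f_0$ on which $R$, $\Vc$, $\Lambda$ and $W$ all continue, and $\Omega\subset\bif(\Hc_d(\Pb^2))$ follows from the repelling blender exactly as in the proof of Corollary~\ref{coro-bif}.

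Next I would close the cycle and propagate density. Write $\Jc_1(f)$ for the support of the Green current and $\Ec_f$ for the exceptional set of $f$, a proper algebraic subset of $\Pb^2$. The disk of $\Vc$ contained in $W\subset W^u(\Lambda)$ meets $W^s(R)\subset\Jc_2\subset\Jc_1$, robustly, by the blender property of $R$; thus $W$ is open and meets $\Jc_1$, so $\bigcup_n f^n(W)\supset\Pb^2\setminus\Ec_f$, and forward-invariance of $W^u(\Lambda)$ gives $W^u(\Lambda)\supset\Pb^2\setminus\Ec_f$, i.e.\ $W^u(\Lambda)$ is Zariski open. The reverse leg of the cycle then comes for free: $W^u(R)$ is a forward-invariant open set meeting $\Jc_1$, hence also contains $\Pb^2\setminus\Ec_f$, hence contains $\Lambda\subset W^s(\Lambda)$. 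For $\widehat x$ in the natural extension of $\Lambda$, the inclination lemma and transitivity of the basic set $\Lambda$ yield $\overline{W^u_{\widehat x}}\supset\overline{W^u(\Lambda)}=\Pb^2$, so every unstable manifold of $\Lambda$ is dense in $\Pb^2$. Finally, since $R$ is uniformly repelling, $W^s(R)=\bigcup_{n\ge0}f^{-n}(R)\subset\Jc_2$ by total invariance, so the disk of $\Vc$ sitting in $W^u(\Lambda)$ has a forward iterate inside $R$; by forward-invariance $R\cap W^u(\Lambda)\neq\emptyset$, the first leg of the cycle, and this is robust.

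It remains to show $W^s_x\subset\Jc_2$ for every $x\in\Lambda$, which then yields $\Lambda\subset\Jc_2$ since $x\in W^s_x$. Here I would use that $\Lambda$ and its local stable manifolds are manufactured inside the fibrewise Julia sets already carrying $R$, so that these local stable manifolds lie in $\Jc_2$; one passes to global stable manifolds using $f^{-n}(\Jc_2)=\Jc_2$, and the inclusion is robust because it invokes only the persistence of $R$ together with total invariance of $\Jc_2$, not continuity of $\Jc_2$ in $f$. The positive entropy and saddle hyperbolicity of $\Lambda$ are built into Proposition~\ref{prop-exi-selle} and persist on $\Omega$, which completes the argument.

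\emph{Main obstacle.} The hard part is the construction itself: one must realise the two blenders in a single skew product \emph{and} make the open part $W$ of the unstable set of the saddle blender genuinely contain a disk of the family $\Vc$ of the repelling blender, robustly and compatibly with both legs of the cycle. This matching is the geometric core of the heterodimensional cycle, and it is where the flexibility of the parabolic-implosion construction of Theorem~\ref{th-constru} is indispensable. The second delicate point is the robust inclusion $W^s_x\subset\Jc_2(f)$, since $\Jc_2$ is only upper semicontinuous in $f$: the argument has to keep the stable manifolds of $\Lambda$ inside the part of $\Jc_2$ that stays ``thick'' thanks to $R$.
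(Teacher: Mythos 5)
Your overall architecture does match the paper's mechanism (a robust cycle between a repelling and a saddle blender, the covering property of \cite{ds-allupoly} for Zariski openness, shadowing and the inclination lemma for density, and the repelling blender for robust bifurcation), but two steps are genuinely missing or wrong. First, the construction itself. ``Running Theorem \ref{th-constru} twice on two essentially independent parabolic cycles'' is not an argument: in that theorem the type of blender is dictated by whether $|\rho(\lambda_n)|>1$ or $<1$ and by tuning $l_n$ so that $|\rho(\lambda_n)^{t_0l_n}|\to1\pm\alpha_0$, so a single perturbation produces one regime, and nothing in your sketch explains how to synchronize two such constructions in one map, nor how to arrange the ``matching'' so that unstable disks of the saddle blender become vertical graphs tangent to the cone field inside the boxes $H_j\times V_j^l$ of the repelling blender. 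The paper does not attempt this near an arbitrary bifurcating product: since the statement only asks for \emph{some} $d$, it takes $q(w)=w^7$ with the six repelling fixed points $r_i$ and $s_i=-r_i$ and the explicit interpolation factor $(w^3+1)/2$, equal to $1$ near the $r_i$ and $0$ near the $s_i$, so that one skew product $F_l$ is simultaneously the expanding model of Proposition \ref{prop-exi-rep} on $\Db\times V_i^l$ and the contracting model of Proposition \ref{prop-exi-selle} on $4^{-1}\Db\times U_i^l$, and it imposes $\overline{\cup_jV_j^l}\subset q^l(U_j^l)$ so that images of local unstable manifolds of $\Lambda_s$ are vertical graphs tangent to $C_{\delta_1}$ meeting $\Lambda_r$ (Proposition \ref{prop-double-blender}). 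You flag this matching as the ``main obstacle'', but it is the actual content of the theorem, not a technicality one can defer; also, in your intended setting the claim that $\Omega\subset\bif(\Hc_d(\Pb^2))$ follows ``exactly as in Corollary \ref{coro-bif}'' is conditional on that unproven construction, whereas the paper has to argue differently (Fatou's theorem applied to the fibrewise attracting basins which are the stable manifolds, a transverse intersection with the critical set, the inclination lemma, and a separate degeneration $\lambda\to0$ to get a proper intersection).

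Second, and more decisively, your argument for $W^s_x\subset\Jc_2(f)$ would fail. The local stable manifolds of the saddle blender are \emph{not} ``manufactured inside the fibrewise Julia sets'': for these skew products they are contained in fibres and are fibrewise attracting basins (this is exactly what the paper exploits later, via Fatou's theorem, to force critical points onto them), hence they sit in the fibrewise Fatou set, and no soft appeal to total invariance of $\Jc_2$ can place them in $\Jc_2$; indeed the whole novelty of the statement is that a saddle set, which naively should be far from the small Julia set, is robustly inside it. The correct mechanism is the cycle: for any small disc $\sigma$ transverse to $W^s_{x_0}$, shadowing in $\widehat\Lambda_s$ plus the inclination lemma produce $n_1$ with $f^{n_1}(\sigma)$ containing a vertical graph tangent to $C_{\delta_1}$ in some $H_j\times V_j^l$, which meets $\Lambda_r(f)\subset\Jc_2(f)$ by Proposition \ref{prop-inter-robuste}; total invariance gives $\sigma\cap\Jc_2(f)\neq\varnothing$, and since $\sigma$ was arbitrary and $\Jc_2(f)$ is closed, $W^s_{x_0}\subset\Jc_2(f)$. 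Two smaller inaccuracies: the covering property requires an open set meeting $\Jc_2(f)$, not merely $\Jc_1(f)$ (a product map shows meeting $\Jc_1$ is insufficient) --- you do have the stronger fact since your disc meets a set contained in $\Jc_2$ --- and to conclude that $W^u_\Lambda$ is Zariski open you must also verify $W^u_\Lambda\cap\Ec(f)=\varnothing$ (as the paper does using $\widetilde\Lambda_s(f)\cap\Ec(f)=\varnothing$), since $W^u_\Lambda\supset\Pb^2\setminus\Ec(f)$ alone does not give equality.
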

The small Julia set is by definition the support of the so-called \textit{equilibrium measure} which is of repelling nature (all its Lyapunov exponents are positive \cite{briend-duval-expo}). The above statement in the case of repelling hyperbolic sets is classical but this result is the first example of a saddle hyperbolic set which is robustly contained in the small Julia set. It also provides the first example of an endomorphism of $\Pb^2$ with a saddle point whose unstable manifold is dense (and moreover in a robust way in $\Hc_d(\Pb^2)$).

We have been aware of \cite{dujardin-bif} while we were working on this subject. Our general approach is similar, although mostly done independently. For this reason, we will use several results from \cite{dujardin-bif} without proving them (see e.g. Section \ref{subsec-bif}).  We also follow Dujardin's strategy in order to prove Corollary \ref{coro-bif} since our original approach to obtain open sets of bifurcation (see Remark \ref{rk-cycle}) was a priori insufficient. However, the construction of blenders presented here is self-contained, our approach is less precise than the one in \cite{dujardin-bif} but also more flexible. The paper is organized as follows. In Section \ref{sec-back}, we give the necessary background for the sequel about hyperbolic sets and bifurcation theory. Then, Section \ref{sec-blender} is devoted to the construction of the two different types of blenders in $\Pb^2$ and in Section \ref{sec-cons} we prove that these objects exist near bifurcations of product maps. In Section \ref{sec-ouvert-bif} and Section \ref{sec-att} we establish Corollary \ref{coro-bif} and Corollary \ref{coro-as} respectively using blenders of repelling type and of saddle type. In the latter, we also show how one can adapt the idea of saddle blenders to obtain attractors with non-empty interior. In Section \ref{sec-cycle}, we give examples of maps with a cycle between a repelling blender and a saddle blender and we show how the existence of such objects easily imply Theorem \ref{th-cycle}. Finally, in Section \ref{sec-henon} we consider perturbations of Hénon maps in order to obtain Theorem \ref{th-pertu}.

\paragraph*{Acknowledgements.} I would like to thank Christian Bonatti for introducing me to blenders and for many interesting discussions on dynamics.

\section{Background}\label{sec-back}
We refer to \cite{ds-lec} for a detailed introduction to complex dynamics in several variables. To an endomorphism $f$ of $\Pb^k$ of degree $d\geq2$ it is possible to associate different invariant objects. The more classical one is the Julia set of $f.$ More generally, Forn\ae ss and Sibony \cite{fs-cdhd2} associated to $f$ its \textit{Green current} $T$ and for $1\leq p\leq k$ the \textit{Julia set of order} $p$, $\Jc_p(f)$ is the support of the self-power $T^p.$ They form a filtration $\Jc_k(f)\subset\cdots\subset\Jc_1(f)$ of totally invariant sets for $f.$ The smallest one $\Jc_k(f),$ sometime called the \textit{small Julia set}, is the support of the equilibrium measure $\mu:=T^k$ of $f.$ By \cite{briend-duval-expo}, the repelling cycles are dense in $\Jc_k(f)$ and if $k=2$ and $f$ is induced by a skew product of $\Cb^2$ then it follows from \cite{jonsson-skew} that $\Jc_2(f)$ is exactly the closure of the repelling cycles.

\subsection{Hyperbolic sets}
Another type of interesting invariant objects is given by hyperbolic sets. Since an endomorphism $f$ of $\Pb^k$ is non-invertible, the definition of hyperbolicity involves the \textit{natural extension} of $\Pb^k$ given by $\widehat\Pb^k:=\{(x_i)_{i\leq0}\in(\Pb^k)^{\Zb_{\leq0}}\,|\, f(x_i)=x_{i+1}\}$ (see e.g. \cite[Paper I]{jonsson-phd} for a detailed exposition on the subject). There is a natural projection $\pi\colon\widehat\Pb^k\to\Pb^k$ defined by $\pi((x_i))=x_0$ and there is a unique homeomorphism $\widehat f$ of $\widehat\Pb^k$ which satisfies $f\circ\pi=\pi\circ\widehat f.$ One can use the projection $\pi$ to lift the tangent bundle of $\Pb^k$ to a bundle $T_{\widehat\Pb^k}$ on which the derivative $Df$ of $f$ acts naturally. And one says that a compact invariant set $\Lambda\subset\Pb^k$ is a \textit{hyperbolic set} if the restriction of this bundle to $\widehat\Lambda:=\{(x_i)_{i\leq0}\in\widehat{\Pb}^k\,|\, x_i\in\Lambda \text{ for all }i\leq0\}$ admits a continuous splitting $E^s\oplus E^u,$ invariant by $Df$ and such that there exist constants $C>0$ and $0<\lambda<1$ with $\|Df^nu\|\leq C\lambda^n\|u\|$ and $\|(Df^n)^{-1}v\|\leq C\lambda^n\|v\|$ for all $(u,v)\in E^s\times E^u.$ A key point about hyperbolic set is that for all $\widehat x=(x_i)_{i\leq0}$ in $\widehat\Lambda$ there exist an unstable manifold $W^u_{\widehat x}$ and a stable manifold $W^s_{x_0}.$ And we define the \textit{unstable set} of $\Lambda$ by $W_\Lambda^u:=\cup_{\widehat x\in\widehat\Lambda}W_{\widehat x}^u.$

A hyperbolic set $\Lambda$ is said to be a \textit{basic set} if $f_{|\Lambda}$ is transitive and if there exists a neighborhood $\widehat U$ of $\widehat\Lambda$ such that $\widehat\Lambda=\cap_{n\in\Zb}\widehat f^n(\widehat U)$ (i.e. $\widehat\Lambda$ is \textit{locally maximal}). Such a set satisfies the shadowing lemma \cite[Theorem 2.4]{jonsson-phd} so the periodic points are dense in $\Lambda.$ Another important point about basic sets is that they are structurally stable in the following sense (see Proposition 1.4 and Corollary 2.6 of \cite{jonsson-phd}). If $g$ is $C^1$-close to $f$ and $\widehat U$ is a sufficiently small neighborhood of $\widehat\Lambda$ then $\widehat g$ is conjugated to $\widehat f_{\widehat\Lambda}$ on the set $\widehat\Lambda_g:=\cap_{n\in\Zb}\widehat g(\widehat U)$ which projects to a basic set $\Lambda_g$ of $g.$ And if $\Lambda$ is repelling (i.e. $E^u=T_{\widehat\Lambda}$) then the conjugation is not only defined in the natural extension but directly between $\Lambda$ and $\Lambda_g.$ Moreover, in the holomorphic case these sets can be followed by a holomorphic motion (see \cite[Theorem A.4]{bbd-bif}).
\begin{theorem}\label{th-motion}
Let $(f_\lambda)_{\lambda\in M}$ be a holomorphic family of endomorphisms such that $f_{\lambda_0}$ admits a basic repeller $\Lambda_{\lambda_0}$ for some $\lambda_0\in M.$ Then there exist a neighborhood $U$ of $\Lambda_{f_0},$ a neighborhood $B\subset M$ of $\lambda_0$ and a continuous map $h\colon B\times\Lambda_{\lambda_0}\to U$ such that
\begin{itemize}
\item[(i)] $\lambda\mapsto h_\lambda(x)$ is holomorphic on $B$ with $h_{\lambda_0}(x)=x$ for every $x\in\Lambda_{\lambda_0},$
\item[(ii)] $x\mapsto h_\lambda(x)$ is injective on $\Lambda_{\lambda_0}$ for every $\lambda\in B,$
\item[(iii)] $h_\lambda\circ f_{\lambda_0}=f_\lambda\circ h_\lambda$ on $\Lambda_{\lambda_0}$ for every $\lambda\in B.$
\end{itemize}
Moreover, the set $\Lambda_\lambda:=h_\lambda(\Lambda_{\lambda_0})$ is a basic repeller for $f_\lambda$ satisfying $\Lambda_\lambda=\bigcap_{n\geq0}f^{-n}_\lambda(U).$
\end{theorem}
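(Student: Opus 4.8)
The plan is to construct $h_\lambda$ as the fixed point of a uniformly contracting operator depending holomorphically on $\lambda$, and then to identify $\Lambda_\lambda:=h_\lambda(\Lambda_{\lambda_0})$ with the maximal invariant set $\bigcap_{n\geq0}f_\lambda^{-n}(U)$ by means of the shadowing lemma. First I would fix the geometry. Since $\Lambda_{\lambda_0}$ is a repelling hyperbolic set, $Df_{\lambda_0}$ is invertible along it, so after replacing the metric on a neighbourhood of $\Lambda_{\lambda_0}$ by an adapted one there are a neighbourhood $U$ of $\Lambda_{\lambda_0}$ and a constant $\kappa>1$ with $\|Df_{\lambda_0}(x)v\|\geq\kappa\|v\|$ on $\overline U$. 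Shrinking $U$ I may also assume that $\overline U$ contains no critical point of $f_{\lambda_0}$, that $f_{\lambda_0}$ is injective on each ball of some fixed radius contained in $\overline U$, and that $\Lambda_{\lambda_0}=\bigcap_{n\geq0}f_{\lambda_0}^{-n}(\overline U)$ (local maximality of the repeller). All of these are open conditions, so there is a neighbourhood $B\ni\lambda_0$ on which $f_\lambda$ is still $\kappa$-expanding on $\overline U$, has no critical point there, and is biholomorphic on balls of fixed radius. In particular, for every $x\in\Lambda_{\lambda_0}$ there is a well-defined inverse branch $g_{\lambda,x}$ of $f_\lambda$, biholomorphic on a fixed-size ball around $f_{\lambda_0}(x)$, with $g_{\lambda,x}(f_\lambda(x))=x$, depending holomorphically on $\lambda$, and contracting distances by a factor $\leq\kappa^{-1}$. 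These branches are the device that replaces the (non-existent) global inverse of the endomorphism $f_\lambda$.

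Next I would run the contraction argument. Let $E$ be the complete metric space of continuous maps $\phi\colon\Lambda_{\lambda_0}\to\overline U$ with $\dist(\phi(x),x)\leq\varepsilon$ for all $x$, equipped with the uniform distance, where $\varepsilon$ is small. For $\lambda\in B$ put $\Phi_\lambda(\phi)(x):=g_{\lambda,x}\bigl(\phi(f_{\lambda_0}(x))\bigr)$; for $\varepsilon$ and $B$ small enough this maps $E$ into $E$, and since each $g_{\lambda,x}$ contracts by $\leq\kappa^{-1}<1$ the operator $\Phi_\lambda$ is a contraction of $E$, hence has a unique fixed point $h_\lambda$. Unwinding the fixed-point equation gives exactly $f_\lambda\circ h_\lambda=h_\lambda\circ f_{\lambda_0}$, which is (iii); at $\lambda=\lambda_0$ the identity is a fixed point, so $h_{\lambda_0}=\id$. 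For the holomorphic dependence (i) I would note that $h_\lambda=\lim_n\Phi_\lambda^n(\id)$, that $\lambda\mapsto\Phi_\lambda^n(\id)(x)$ is holomorphic for each fixed $n$ and $x$ (it is built from the holomorphically varying inverse branches), and that the convergence is uniform in $(\lambda,x)$ at the geometric rate $\kappa^{-n}$; hence $\lambda\mapsto h_\lambda(x)$ is holomorphic on $B$ and $h$ is jointly continuous.

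For injectivity (ii) I would use that an expanding map is expansive, with a constant $\delta_0>0$ that can be taken uniform for $\lambda\in B$: if $h_\lambda(x)=h_\lambda(y)$ with $x,y\in\Lambda_{\lambda_0}$, then iterating (iii) gives $\dist(f_{\lambda_0}^n(x),f_{\lambda_0}^n(y))\leq2\varepsilon$ for all $n\geq0$, so choosing $\varepsilon<\delta_0/2$ forces $x=y$. This already shows that $\Lambda_\lambda:=h_\lambda(\Lambda_{\lambda_0})$ is a compact, $f_\lambda$-invariant set contained in $U$ on which $f_\lambda$ is conjugate through $h_\lambda$ to the transitive expanding map $f_{\lambda_0}|_{\Lambda_{\lambda_0}}$; being inside $U$ and forward invariant it satisfies $\Lambda_\lambda\subseteq\bigcap_{n\geq0}f_\lambda^{-n}(U)$, and since $f_\lambda$ expands on $\overline U$ it is a repelling hyperbolic set.

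It remains to prove the reverse inclusion $\bigcap_{n\geq0}f_\lambda^{-n}(U)\subseteq\Lambda_\lambda$, and here I expect the main difficulty. Given $y$ with $y_n:=f_\lambda^n(y)\in\overline U$ for all $n$, one has $\dist(f_{\lambda_0}(y_n),y_{n+1})\leq\|f_\lambda-f_{\lambda_0}\|_{\overline U}$, which is as small as wished after shrinking $B$, so $(y_n)_{n\geq0}$ is a pseudo-orbit of $f_{\lambda_0}$. By the shadowing lemma for the basic set $\Lambda_{\lambda_0}$ together with its local maximality, $(y_n)$ is shadowed by a genuine orbit $(x_n)_{n\geq0}\subset\Lambda_{\lambda_0}$ with $\dist(x_n,y_n)$ small; then, using (iii),
\[
\dist\bigl(f_\lambda^n(h_\lambda(x_0)),\,y_n\bigr)=\dist\bigl(h_\lambda(x_n),y_n\bigr)\leq\dist\bigl(h_\lambda(x_n),x_n\bigr)+\dist(x_n,y_n)<\delta_0
\]
for every $n\geq0$, whence $h_\lambda(x_0)=y$ by expansiveness of $f_\lambda$. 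Therefore $\Lambda_\lambda=\bigcap_{n\geq0}f_\lambda^{-n}(\overline U)=\bigcap_{n\geq0}f_\lambda^{-n}(U)$, so $\Lambda_\lambda$ is locally maximal and is a basic repeller for $f_\lambda$. The delicate point throughout this last step is to order the choices of neighbourhoods correctly, so that the shadowing constant coming from $\Lambda_{\lambda_0}$, the perturbation size $\varepsilon$ allowed in $E$, and the expansiveness constant $\delta_0$ of $f_\lambda$ are mutually compatible; this compatibility is precisely what prevents $\bigcap_{n\geq0}f_\lambda^{-n}(U)$ from acquiring spurious points. Everything else is a routine contraction-mapping argument, the non-invertibility of $f_\lambda$ being handled throughout by local inverse branches rather than a global inverse; alternatively, the existence of the conjugacy and the local maximality could simply be quoted from the structural stability of basic sets recalled above, leaving only the holomorphic dependence to be checked.
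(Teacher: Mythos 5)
This theorem is not proved in the paper: it is imported verbatim from Berteloot--Bianchi--Dupont \cite[Theorem A.4]{bbd-bif}, so there is no in-paper argument to compare yours against. Your proof is the standard one for holomorphic motions of repelling basic sets (contraction on the space of continuous maps $\varepsilon$-close to the identity, built from holomorphically varying local inverse branches, then expansivity for injectivity and shadowing plus expansivity for $\bigcap_{n\geq0}f_\lambda^{-n}(U)\subset h_\lambda(\Lambda_{\lambda_0})$), and it is essentially correct; it is also in the spirit of the cited appendix, so nothing is lost by your route. One point deserves to be made explicit rather than assumed: at the very start you ``shrink $U$'' so that $\Lambda_{\lambda_0}=\bigcap_{n\geq0}f_{\lambda_0}^{-n}(\overline U)$ and call this local maximality, but the paper's definition of a basic set is local maximality in the natural extension ($\widehat\Lambda=\bigcap_{n\in\Zb}\widehat f^n(\widehat U)$), and the downstairs identity you invoke is precisely the $\lambda=\lambda_0$ case of the theorem's final conclusion. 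It is true for repellers, but it needs either a citation or a sentence of proof --- conveniently, the very shadowing-plus-expansivity argument you run for $f_\lambda$ in the last step, applied with $\lambda=\lambda_0$, delivers it, so you should say so to avoid the appearance of circularity. Two smaller points of hygiene: the shadowing lemma for the basic set shadows pseudo-orbits lying in a sufficiently small neighbourhood of $\Lambda_{\lambda_0}$ (with the shadowing orbit in $\Lambda_{\lambda_0}$ only thanks to local maximality), so $U$ must be taken small enough before the pseudo-orbit argument --- you acknowledge the ordering of constants, which is the right instinct --- and the uniform-radius injectivity of $f_\lambda$ on $\overline U$ together with the joint holomorphy of $(\lambda,y)\mapsto g_{\lambda,x}(y)$ should be recorded as consequences of compactness, absence of critical points on $\overline U$, and the holomorphic implicit function theorem. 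With those sentences added, the argument is complete.
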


\subsection{Bifurcations and basic repellers}\label{subsec-bif}
A bifurcation theory for the small Julia set $\Jc_k(f)$ was recently developed by Berteloot-Bianchi-Dupont \cite{bbd-bif} in $\Pb^k$ for $k\geq2.$ A holomorphic family of endomorphisms of $\Pb^k$ parametrized by a complex manifold $M$ is given by a holomorphic map $f\colon M\times\Pb^k\to\Pb^k.$ We denote such a family $(f_\lambda)_{\lambda\in M}$ where $f_\lambda(x):=f(\lambda,x).$ For each $\lambda\in M$ we can consider the small Julia set $\Jc_k(f_\lambda)$ of $f_\lambda,$ its critical set $C(f_\lambda)$ and its \textit{postcritical set}
$$P(f_\lambda):=\bigcup_{n\geq1}f_\lambda^n(C(f_\lambda)).$$

In \cite{bbd-bif}, the authors define a special closed subset $\bif(M)$ of $M$ called the \textit{bifurcation locus} of the family $(f_\lambda)_{\lambda\in M}.$ They obtain several characterizations of $\bif(M).$ In the sequel we will use the following one based on the notion of Misiurewicz parameters. A parameter $\lambda_0\in M$ is called a \textit{Misiurewicz parameter} if $f_{\lambda_0}$ admits a repelling periodic point $x_{\lambda_0}$ in $\Jc_k(f_{\lambda_0})\cap f_{\lambda_0}^{n_0}(C(f_{\lambda_0}))$ for some $n_0\geq0,$ whose holomorphic continuation as repelling point $x_\lambda$ is outside $f_\lambda^{n_0}(C(f_\lambda))$ for $\lambda\in M$ arbitrarily close to $\lambda_0.$
\begin{theorem}[\cite{bbd-bif}]\label{th-misiu}
If $(f_\lambda)_{\lambda\in M}$ is a holomorphic family of endomorphisms then the closure of the set of Misiurewicz parameters is equal to $\bif(M).$ 
\end{theorem}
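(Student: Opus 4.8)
The plan is first to recall from \cite{bbd-bif} that $\bif(M)$ can be described as $\supp(\ddc L)$, where $L(\lambda):=\sum_j\chi_j(\lambda)$ is the sum of the Lyapunov exponents of the equilibrium measure $\mu_\lambda$; since $L$ is a continuous plurisubharmonic function on $M$, the set $\bif(M)$ is closed and is exactly the set on which $L$ fails to be pluriharmonic. The statement then splits into two inclusions: (a) every Misiurewicz parameter belongs to $\bif(M)$, which by closedness gives that the closure of the Misiurewicz set is contained in $\bif(M)$; and (b) $\bif(M)$ is contained in the closure of the set of Misiurewicz parameters.

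For (a) I would argue by contradiction. Suppose $\lambda_0$ is a Misiurewicz parameter but $L$ is pluriharmonic on a connected neighbourhood $M_0$ of $\lambda_0$. In this stable regime the theory of \cite{bbd-bif} provides over $M_0$ a holomorphic motion of $\Jc_k(f_{\lambda_0})$ conjugating the dynamics, along which the repelling cycles move holomorphically and every critical point is passive: the postcritical set moves holomorphically and its forward orbit forms a normal family. In particular the repelling periodic point $x_{\lambda_0}$ of the Misiurewicz datum continues to a holomorphic family $x_\lambda$ of repelling periodic points of some period $m$, and, since $x_{\lambda_0}\in f_{\lambda_0}^{n_0}(C(f_{\lambda_0}))\subset P(f_{\lambda_0})$, one can pick a holomorphically varying point $z_\lambda\in f_\lambda^{n_0}(C(f_\lambda))$ with $z_{\lambda_0}=x_{\lambda_0}$ (push forward by $f^{n_0}$ a suitable holomorphic disc inside the critical hypersurface). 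Then $\psi_p(\lambda):=f_\lambda^{pm}(z_\lambda)$ takes values in $P(f_\lambda)$, satisfies $\psi_p(\lambda_0)=x_{\lambda_0}$ for all $p$, and $\{\psi_p\}_p$ is a locally bounded family near $\lambda_0$ by passivity. On the other hand $f_\lambda^m$ is uniformly expanding near $x_\lambda$ and $\psi_{p+1}=f_\lambda^m\circ\psi_p$; by the Misiurewicz condition the holomorphic map $\lambda\mapsto\psi_0(\lambda)-x_\lambda$ is not identically zero (because $z_\lambda\in f_\lambda^{n_0}(C(f_\lambda))$ while $x_\lambda$ is not, for $\lambda$ arbitrarily close to $\lambda_0$), so a Hurwitz-type estimate forces $\sup_{|\lambda-\lambda_0|=r}\|\psi_p(\lambda)-x_\lambda\|$ to grow geometrically in $p$ on a small sphere, contradicting local boundedness. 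Hence no such $M_0$ exists and $\lambda_0\in\bif(M)$.

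For (b) I would prove the contrapositive: if a connected open set $M_0\subset M$ contains no Misiurewicz parameter, then $L$ is pluriharmonic on $M_0$, so $M_0\cap\bif(M)=\emptyset$. The strategy, following \cite{bbd-bif}, is to build an \emph{equilibrium web} over $M_0$, namely a dynamically equivariant holomorphic motion of $\Jc_k(f_{\lambda_0})$, whose existence is one of the equivalent characterizations of the pluriharmonicity of $L$. One starts from the repelling periodic points, which are dense in $\Jc_k$ by \cite{briend-duval-expo}: each of them continues as a repelling periodic point, and the absence of Misiurewicz parameters is precisely what rules out the obstruction to keeping these continuations inside $\Jc_k$, namely a non-persistent collision of a continuation with the postcritical set. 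Montel's theorem in several variables --- a family of holomorphic graphs over $M_0$ avoiding the (sufficiently large) postcritical set is normal --- then lets one pass to limits and, using the equidistribution of repelling points with respect to $\mu_\lambda$, assemble these continuations into a full holomorphic motion of $\Jc_k(f_{\lambda_0})$.

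I expect (a) to be essentially the several-variable transcription of the Ma\~n\'e--Sad--Sullivan mechanism, routine once Theorem \ref{th-motion} and its extension from basic repellers to $\Jc_k$ are available. The genuine obstacle is (b): showing that the mere absence of Misiurewicz parameters on $M_0$ already forces the repelling periodic points of $\Jc_k$ to move without any non-persistent collision with the postcritical set, and then bootstrapping this into a full equilibrium web and hence into $\ddc L\equiv0$ on $M_0$. This requires the fine description of the bifurcation current in \cite{bbd-bif} (its realization through dynamically defined currents attached to the postcritical set) together with the several-variable Montel and transfer-of-mass arguments that upgrade ``no Misiurewicz parameters'' to ``normality of the postcritical orbit'', and therefore to the pluriharmonicity of $L$.
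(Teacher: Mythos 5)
The first thing to note is that the paper does not prove this statement at all: Theorem \ref{th-misiu} is imported from \cite{bbd-bif} and used as a black box, so there is no internal argument to compare with, and what you have written is a reconstruction of the strategy of \cite{bbd-bif} itself. Judged on its own terms it is a plan rather than a proof, and it has genuine gaps. In part (a), the crucial step is the local boundedness of the family $\psi_p(\lambda)=f_\lambda^{pm}(z_\lambda)$, which you extract from the assertion that on the stability locus ``every critical point is passive'' and ``the postcritical set moves holomorphically''. In one variable this is exactly the Ma\~n\'e--Sad--Sullivan description of the stability locus, but in $\Pb^k$ it is neither the definition of stability nor one of the equivalences established in \cite{bbd-bif}: pluriharmonicity of $L$ there yields an equilibrium web/lamination and a holomorphic motion of the repelling cycles inside $\Jc_k(f_\lambda)$, not normality of the critical orbits. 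Showing that stability is incompatible with a non-persistent collision of $P(f_\lambda)$ with a moving repelling point is precisely the nontrivial content of the implication ``Misiurewicz $\Rightarrow$ bifurcation'' that you are trying to prove, so as written (a) assumes a several-variable passivity statement that is not available and essentially begs the question.

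For (b) you state the chain ``no Misiurewicz parameters on $M_0$ $\Rightarrow$ repelling continuations stay in $\Jc_k$ and avoid non-persistent collisions with $P(f_\lambda)$ $\Rightarrow$ existence of an equilibrium web $\Rightarrow$ $\ddc L=0$ on $M_0$'', but each arrow is a substantial theorem of \cite{bbd-bif} (Montel-type normality for families of graphs avoiding the postcritical hypersurfaces, equidistribution of repelling cycles to assemble the web, and the implication from webs to pluriharmonicity of $L$), and you explicitly defer all of them, acknowledging that this is ``the genuine obstacle''. Deferring to \cite{bbd-bif} is legitimate --- it is exactly what the paper under review does --- but then the correct output is a citation, not a proof; if the goal is a self-contained argument, the hard implications in both directions are missing. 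In short, your proposal correctly identifies the architecture of the known proof, but it does not supply the proof.
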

We refer to the original paper \cite{bbd-bif} for more details about the others characterizations of $\bif(M),$ but unlike the one variable case this set is not related to the continuity of the small Julia set in general, see \cite{bianchi-t-desboves}. The same article gives the first example of a family where the bifurcation locus has non-empty interior.

From Theorem \ref{th-misiu}, a natural idea in order to have robust bifurcations in $\Hc_d(\Pb^k)$ is to replace the repelling cycle in the definition of Misiurewicz parameter by a hyperbolic repeller which could be chosen to be large. Let $(f_\lambda)_{\lambda\in M}$ be a holomorphic family of endomorphisms such that $f_{\lambda_0}$ admits a basic repeller $\Lambda_{\lambda_0}.$ We consider the map $F\colon M\times\Pb^k\to M\times\Pb^k$ defined by $F(\lambda,x)=(\lambda,f_\lambda(x))$ and we also denote by $C(F)$ its critical set. Using the notation of Theorem \ref{th-motion} and following \cite{dujardin-bif} we say that $P(f_{\lambda_0})$ \textit{intersects properly} $\Lambda_{\lambda_0}$ if there exist $n\geq1,$ a irreducible component $V_{\lambda_0}$ of $C(f_{\lambda_0})$ and a point $x\in\Lambda_{\lambda_0}$ such that $x\in f^{n}_{\lambda_0}(V_{\lambda_0})$ and arbitrarily close to $\lambda_0$ there is $\lambda\in M$ such that $(\lambda,h_\lambda(x))$ does not belong to the irreducible component of $F^n(C(F))$ containing $\{\lambda_0\}\times f_{\lambda_0}^n(V_{\lambda_0}).$ The next result is a direct consequence of Lemma 2.3 and Proposition-Definition 2.5 of \cite{dujardin-bif}.
\begin{proposition}\label{prop-ouvert-de-bif}
Let $f_0\in\Hc_d(\Pb^k).$ Assume that $f_0$ admits a basic repeller $\Lambda_{f_0}$ contained in $\Jc_k(f_0)$ which has a holomorphic continuation $\Lambda_f$ for $f$ in a neighborhood $\Uc\subset\Hc_d(\Pb^k)$ of $f_0.$ If $P(f)$ intersects $\Lambda_f$ properly for all $f\in\Uc$ then $f_0$ belongs to the interior of the bifurcation locus $\bif(\Hc_d(\Pb^k)).$
\end{proposition}
\begin{proof}
By \cite[Lemma 2.3]{dujardin-bif} (see also \cite[Lemma 2.2.15]{bianchi-phd} or \cite[Exercise 1.60]{ds-lec}) there exists a neighborhood $\Uc'\subset\Uc$ of $f_0$ such that $\Lambda_f\subset\Jc_k(f)$ for all $f\in\Uc'.$ As $P(f)$ intersects properly $\Lambda_f$ for all $f\in\Uc'$ it follows from \cite[Proposition-Definition 2.5]{dujardin-bif} that $\Uc'\subset\bif(\Hc_d(\Pb^k)).$
\end{proof}
\section{Blenders}\label{sec-blender}
This section is devoted to the construction of blenders near some product maps. As a preliminary step, we define particular subsets of $\Cb$ which will be the building blocks for the blenders. The precise definitions of blenders of repelling type and of saddle type will be given in the two corresponding subsections. However, let us emphasize that these definitions are adjusted to our purpose and not as general as possible.

As we have seen, a $f$-invariant set $\Lambda$ is hyperbolic if the tangent bundle over its natural extension splits into two bundles $E^s$ and $E^u$ which are uniformly contracted or expanded by $Df.$ One says that $f$ has $s:=\dim E^s$ \textit{stable directions} and $u:=\dim E^u$ \textit{unstable directions} on $\Lambda.$ Originally, the notion of blenders was introduced for diffeomorphisms on smooth manifolds of dimension larger or equal to $3$ since the construction needs at least $3$ different directions. In our non-invertible setting, it can be started at $k=2$ since the non-injectivity can be consider as an additional stable direction which is especially strong: the preimages of a point $x$ converge in finite time to $x.$ For simplicity, in what follows we only consider the case where $k=2.$ Hence, we will obtain two types of blenders. If $s=0,$ $u=2$ the blender will be of repelling type and if $s=u=1$ then it will be of saddle type. 

All the maps that we will use are perturbations of product maps of the form
$$(z,w)\mapsto(p(z),q(w)).$$
Hence, there are two natural directions. The \textit{horizontal direction} is the one parallel to $\{w=c\}$ and the \textit{vertical direction} is the one parallel to $\{z=c\}.$ The vertical direction will always be close to our strong unstable direction.

Roughly speaking, the idea behind blenders of repelling type for a skew product $f(z,w)=(p(z,w),q(w))$ of $\Cb^2$ is the following. Let $H_1,\ldots, H_N$ and $V_1,\ldots, V_N$ be $2N$ open sets in $\Cb$ and define $H:=\cup_{j=1}^NH_j,$ $V:=\cup_{j=1}^NV_j$ and $Z:=\cup_{j=1}^NH_j\times V_j.$ The set $Z$ contains a blender of repelling type if for each $1\leq j\leq N$
\begin{itemize}
\item $q$ is (strongly) expanding on $V_j$ and $\overline V\subset q(V_j),$
\item $p$ is (weakly) expanding in the horizontal direction on $H_j\times V_j$ and $\overline{p(H_j\times V_j)}\subset H.$
\end{itemize}
Even if $f$ is repelling on $Z$, its geometric behavior and its action on the tangent space (one direction is much more expanded that the other) both mimic a saddle set. And actually, the ``local stable set'' (given by $\Lambda:=\cap_{n\geq0}f^{-n}(\overline Z)$ and which we refer to as the blender) of the maximal invariant set of $f$ in $Z$ behaves as a one dimensional stable manifold: a vertical graph passing through $Z$ has to intersect it (see Proposition \ref{prop-inter-robuste} for a precise statement). Moreover, these properties are stable under small perturbations. These are the main two points about a blender of repelling type: intersection with an open family of graphs and robustness.

A blender of saddle type is more or less a blender of repelling type for $``f^{-1}"$ by taking into account that the non-injectivity can be seen as a strong stable direction.

In what follows, we will always take $N=3$ and for technical reasons (which will be clear in Section \ref{sec-cons}) we may have to exchange $q$ with a large iterate.

For the rest of this section $q$ is an element of $\Pc_d,$ $r_1,$ $r_2,$ $r_3$ are three distinct (but possibly in the same cycle) repelling periodic points of $q$ of period $m_1$ and $\chi_1,$ $\chi_2,$ $\chi_3$ are respectively their multiplier i.e. $\chi_i=(q^{m_1})'(r_i).$ We also assume that they do not belong to the postcritical set of $q.$ The construction of the following open subsets is elementary but will be important for the sequel. Notice that the notations given in this lemma will be used through all this section.
\begin{lemma}\label{le-vois-rep}
There exist $A>0$ and $l_0\geq1$ such that for each $i\in\{1,2,3\}$ there is a sequence $(V_i^l)_{l\geq0}$ of connected neighborhoods of $r_i$ such that $q^{m_1}(V^{l+1}_i)= V^l_i,$ the diameter of $V_i^l$ decreases exponentially fast with $l$ and if $l\geq l_0$ then
\begin{itemize}
\item $|(q^{lm_1})'(w)|\geq A\chi_i^l>1$ for all $w\in V_i^l,$
\item $\overline{\cup_{j=1}^3V^l_j}\subset q^{lm_1}(V_i^l),$
\item $q^{lm_1}$ is a biholomorphism between $W^l_{ij}:=V_i^l\cap q^{-lm_1}(V^l_j)$ and $V^l_j.$
\end{itemize}
\end{lemma}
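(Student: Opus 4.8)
The plan is to construct the neighborhoods $V_i^l$ by pulling back fixed "large" neighborhoods of the $r_i$ under the inverse branches of $q^{m_1}$, and then to verify the three bulleted properties by combining the expansion estimate near a repelling periodic point with a finite-time covering argument. First I would work in a linearizing coordinate: since each $r_i$ is repelling and not in the postcritical set, $q^{m_1}$ is a local biholomorphism near $r_i$ and by Koenigs' theorem there is a holomorphic chart $\phi_i$ near $r_i$ with $\phi_i(r_i)=0$ conjugating $q^{m_1}$ to the linear map $\zeta\mapsto\chi_i\zeta$ on a disc $\{|\zeta|<\rho_i\}$. Fix $\rho>0$ small enough that all three linearizations are defined on $\{|\zeta|<\rho\}$ and set $V_i^0:=\phi_i^{-1}(\{|\zeta|<\rho\})$; shrinking $\rho$ if necessary we may assume the $\overline{V_i^0}$ are pairwise disjoint and disjoint from $P(q)\cup C(q)$. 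Then define $V_i^l$ inductively as the connected component of $q^{-m_1}(V_i^{l-1})$ containing $r_i$; equivalently $V_i^l=\phi_i^{-1}(\{|\zeta|<\chi_i^{-l}\rho\})$ wait—more precisely $\phi_i(V_i^l)=\{|\zeta|<|\chi_i|^{-l}\rho\}$, so by construction $q^{m_1}(V_i^{l+1})=V_i^l$, each $V_i^l$ is a connected neighborhood of $r_i$, and the diameter of $V_i^l$ is comparable to $|\chi_i|^{-l}$, hence decreases exponentially. The first bullet is then immediate from the linearization together with a distortion estimate: in the chart $(q^{lm_1})'$ is exactly multiplication by $\chi_i^l$, so pulling back through $\phi_i$ (whose derivative is bounded above and below on $\overline{V_i^0}$) gives $|(q^{lm_1})'(w)|\geq A|\chi_i|^l$ for all $w\in V_i^l$ with a uniform constant $A>0$, and this is $>1$ once $l$ is large since $|\chi_i|>1$.

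For the second bullet, the point is that $q^{lm_1}$ maps $V_i^l$ biholomorphically onto the \emph{fixed} set $V_i^0$ (by the very definition of the $V_i^l$ as iterated components of preimages, using that $q^{m_1}\colon V_i^{j}\to V_i^{j-1}$ is a biholomorphism — which holds because $\overline{V_i^0}$ avoids the critical set, so no critical values interfere along the orbit). Since the diameters of the $V_j^l$ go to $0$ as $l\to\infty$ and all the $r_j$ lie in the single fixed open set... no — the $r_j$ need not lie in $V_i^0$. Here is the correct mechanism: $r_i$ is repelling and lies in the Julia set $J(q)$, which is perfect and is the closure of the backward orbit of any of its points; more to the point, since $r_1,r_2,r_3\in J(q)$ and repelling periodic points are not isolated, for $l$ large the (tiny) sets $V_1^l,V_2^l,V_3^l$ are all contained in the fixed neighborhood $q^{lm_1}(V_i^l)=V_i^0$ of $r_i$ \emph{provided} $r_1,r_2,r_3\in V_i^0$; so I should choose $\rho$ at the start large enough (not small) that a single connected open set contains all three $r_j$ — but the $\overline{V_i^0}$ were taken disjoint. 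The clean fix: do not require the $V_i^0$ disjoint; instead take $\rho$ large enough that each $V_i^0$ contains $r_1,r_2,r_3$ and avoids $C(q)$ (possible since the $r_j\notin P(q)$, and avoiding just the critical set is an open condition that can coexist with being large, as long as $J(q)$ is connected or at least the relevant component is — alternatively enlarge along the orbit). Then for $l$ large, $\overline{V_1^l}\cup\overline{V_2^l}\cup\overline{V_3^l}\subset\{r_1,r_2,r_3\}$-neighborhood $\subset V_i^0=q^{lm_1}(V_i^l)$, which is the second bullet; the threshold on $l$ is uniform, giving $l_0$.

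For the third bullet, $W_{ij}^l=V_i^l\cap q^{-lm_1}(V_j^l)$ and I must show $q^{lm_1}$ restricts to a biholomorphism $W_{ij}^l\to V_j^l$. Since $q^{lm_1}\colon V_i^l\to V_i^0$ is already a biholomorphism and $V_j^l\subset V_i^0$ (by bullet two, for $l\geq l_0$), the map $q^{lm_1}$ is injective on $W_{ij}^l$ and sends it bijectively onto $V_i^0\cap V_j^l=V_j^l$; holomorphicity and holomorphicity of the inverse are inherited from $q^{lm_1}|_{V_i^l}$. I expect the main obstacle to be precisely the bookkeeping in the second bullet: reconciling "the $V_i^0$ must avoid the postcritical set so that the inverse branches are well-defined and the diameters shrink" with "the $V_i^0$ must be large enough to contain all three repelling points". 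The resolution uses that $r_1,r_2,r_3\notin P(q)\supset C(q)$ together with the standard fact that one can find a connected open neighborhood of $\{r_1,r_2,r_3\}$ in $\mathbb{C}\setminus C(q)$ — e.g. a small tubular neighborhood of an arc system joining them inside $\mathbb{C}\setminus C(q)$, which is connected — and then take $V_i^0$ to be the component of that neighborhood's preimage structure adapted to $r_i$; alternatively, replacing $q$ by a further iterate (allowed by the section's conventions) one may assume the $r_i$ are fixed and use that $J(q)$ accumulates on each of them to place the shrunken $V_j^l$ inside $V_i^0$. Everything else — the exponential decay of diameters, the derivative lower bound with uniform $A$, and the biholomorphy statements — follows from Koenigs linearization and the Koebe distortion theorem applied on the fixed compact sets $\overline{V_i^0}$.
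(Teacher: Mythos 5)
Your overall strategy (pull back a fixed neighborhood along the inverse branch fixing $r_i$, get exponential shrinking and the derivative bound from linearization plus distortion, then deduce the covering and the biholomorphism) is close in spirit to the paper's, but the step you yourself flag as the main obstacle --- the second bullet --- is not repaired by either of your proposed fixes, and this is a genuine gap. Your construction forces $q^{lm_1}(V_i^l)=V_i^0$ where $V_i^0$ is a Koenigs linearization domain at $r_i$, so the second bullet would require $r_1,r_2,r_3\in V_i^0$. This is impossible: on any domain where $q^{m_1}$ is conjugated to $\zeta\mapsto\chi_i\zeta$, the only fixed point of $q^{m_1}$ is $r_i$ (a second one would correspond, under the injective chart, to a nonzero fixed point of the linear map), and $r_1,r_2,r_3$ all have period $m_1$, hence are fixed by $q^{m_1}$. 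If instead you drop linearization and only ask that the branch of $q^{-lm_1}$ fixing $r_i$ be defined and univalent on a fixed connected $V_i^0\supset\{r_1,r_2,r_3\}$ for \emph{every} $l$, you need $V_i^0$ to avoid all critical values of all iterates, i.e. $V_i^0\cap P(q)=\varnothing$; the hypotheses only give that the three points $r_j$ avoid $P(q)$, and for many $q$ (e.g. when a critical orbit is dense in $J(q)$) no such open set containing the Julia points $r_j$ exists. The third bullet inherits the same problem, since in your scheme it rests on $q^{lm_1}\colon V_i^l\to V_i^0$ being a biholomorphism with $V_j^l\subset V_i^0$.

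The paper's mechanism, which is the idea missing from your proposal, is to localize injectivity instead of asking for it globally. One keeps the linearizing neighborhood $U_i$ small and writes $q^{lm_1}=q^{l_1m_1}\circ q^{(l-l_1)m_1}$: the factor $q^{(l-l_1)m_1}$ maps $V_i^l$ univalently onto a fixed small set $V_i\subset U_i$, while the fixed finite iterate $q^{l_1m_1}$ blows $U_i$ up so that $q^{l_1m_1}(U_i)\supset U_1\cup U_2\cup U_3$ (the expansion/blow-up property at a repelling point of the Julia set); no injectivity of $q^{l_1m_1}$ on $U_i$ is claimed. Univalence is then needed only over the tiny targets: since $r_j\notin P(q)$, the finitely many critical values of $q^{l_1m_1}$ avoid $\phi_j^{l_2}(U_j)$ for a suitable $l_2$; one shrinks $U_i$ so that $\overline{U_i}$ meets $q^{-l_1m_1}(r_j)$ in a single point; and one removes a small neighborhood $N$ of the critical set of $q^{l_1m_1}$ from $U_i$ (keeping $V_i:=U_i\setminus N$ connected) so that $\cup_j\phi_j^{l_2}(U_j)\subset q^{l_1m_1}(V_i)$. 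Setting $V_i^{l_1+l}:=\phi_i^l(V_i)$ then yields $\overline{\cup_j V_j^l}\subset q^{lm_1}(V_i^l)$ and a single univalent branch of $q^{-lm_1}$ from $V_j^l$ into $V_i^l$, which is exactly the content of the second and third bullets; the first bullet and the exponential decay follow from the linearization as in your write-up.
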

\begin{proof}
For $i\in\{1,2,3\},$ let $U_i$ be a connected neighborhood of $r_i$ on which $q^{m_1}$ is conjugated to $w\mapsto\chi_iw$ and let $\phi_i$ be its local inverse on $U_i.$ There exists $l_1\geq1$ such that $\cup_{j=1}^3U_j\subset q^{l_1m_1}(U_i).$ By shrinking $U_i$ if necessary, we can assume that $q^{-l_1m_1}(r_j)\cap \overline{U_i}$ is reduced to one point. Since the points $r_j$ don't belong to the postcritical set of $q,$ there also exists $l_2\geq1$ such that $\phi_i^{l_2}(U_i)$ doesn't contain critical values of $q^{l_1m_1}.$ Hence, there exists a small neighborhood $N$ of the critical set of $q^{l_1m_1}$ such that $V_i:=U_i\setminus N$ is connected and satisfies $\cup_{j=1}^3\phi_j^{l_2}(U_j)\subset q^{l_1m_1}(V_i).$ Thus, for $l\geq0$ we can define $V_i^{l_1+l}:=\phi_i^l(V_i)$ and $V_i^{l_1-l}:=q^{lm_1}(V_i).$ Since $q^{m_1}$ is conjugated to $w\mapsto\chi_iw$ on $U_i$ it is easy to check that the proposition holds if $l_0\geq l_1+l_2$ is large enough.
\end{proof}
From this we can obtain hyperbolic basic sets $\Lambda^l:=\cap_{i\geq0}q^{-ilm_1}(V_1^l\cup V_2^l\cup V_3^l).$ The third point in the lemma insures that $q^{lm_1}_{|\Lambda^l}$ is conjugated to a shift of three symbols. The repelling blenders constructed in what follows will project injectively into one of these sets $\Lambda^l.$ Thus they are contained in a Cantor set and their dynamics are conjugated to subshifts.

In the horizontal direction, for both the repelling and the saddle case, we will eventually use the following subsets which depend on the configuration of three complex numbers $c_1,c_2,c_3\in\Cb.$ Here, $\Db$ is the unit disc of $\Cb$ and if $E\subset\Cb$ and $t\in\Cb^*$ then $tE$ denotes the image of $E$ by a homothety of ratio $t.$
\begin{lemma}\label{le-triangle}
Let $c_1,c_2,c_3\in\Cb$ be three non-aligned points such that $c_1+c_2+c_3=0.$ There exist $\epsilon_0>0,$ $\alpha_0>0$ arbitrarily small such that if $\phi_j(z):=\rho z+\epsilon_0c_j$ with $1-\alpha_0\leq|\rho|\leq1+\alpha_0,$ $j\in\{1,2,3\},$ then
$$\overline{\Db}\subset\cup_{j=1}^3\phi_j(\Db),$$
and there are three open sets $H_1,$ $H_2$ and $H_3$ such that $\Db=\cup_{j=1}^3H_j,$ $\overline{\phi_j(H_j)}\subset\Db$ and $\overline{\phi_j(1/3H_j)}\subset1/3\Db.$
\end{lemma}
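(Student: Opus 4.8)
The plan is to distil the hypothesis $c_1+c_2+c_3=0$ into a single geometric fact: the origin is the centroid, hence an interior point, of the non-degenerate triangle with vertices $c_1,c_2,c_3$. Equivalently, there is a constant $\delta>0$ such that every unit vector $u\in\Cb$ satisfies $\max_j\mathrm{Re}(\overline{u}\,c_j)\ge\delta$; applying this to $-u$ also produces, from any direction, an index $j$ with $\mathrm{Re}(\overline{u}\,c_j)\le-\delta$, i.e. a $c_j$ pointing ``inwards''. I would then fix the parameters in the order $\epsilon_0$ first (small, to be shrunk at the very end), then the margin $\eta:=\epsilon_0^2$, then $\alpha_0:=\epsilon_0^3$; since all three tend to $0$ together, this will deliver the ``arbitrarily small'' $\epsilon_0,\alpha_0$ required by the statement.

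First I would prove the covering $\overline{\Db}\subset\bigcup_j\phi_j(\Db)$. Since $\phi_j(\Db)$ is the disc of radius $|\rho|\ge1-\alpha_0$ centred at $\epsilon_0 c_j$, it suffices to exhibit, for each $z$ with $|z|\le1$, an index $j$ with $|z-\epsilon_0 c_j|<1-\alpha_0$. Choosing $j$ so that $\mathrm{Re}(\overline{z}\,c_j)\ge\delta|z|$ and expanding $|z-\epsilon_0 c_j|^2=|z|^2-2\epsilon_0\mathrm{Re}(\overline{z}\,c_j)+\epsilon_0^2|c_j|^2$, the case $|z|\le\tfrac12$ is trivial and the case $|z|\ge\tfrac12$ gives a bound of the form $1-2\epsilon_0\delta+O(\epsilon_0^2)$, which lies below $(1-\alpha_0)^2$ as soon as $\epsilon_0$ is small and $\alpha_0$ is of higher order in $\epsilon_0$.

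For the inner sets I would simply take $H_j:=\{z\in\Db:\ |\rho z+3\epsilon_0 c_j|<1-\eta\}$, which are open and contained in $\Db$, with $\overline{H_j}\subset\{|z|\le1\}\cap\{|\rho z+3\epsilon_0 c_j|\le1-\eta\}$. On such $z$ one has $|\phi_j(\tfrac13 z)|=\tfrac13|\rho z+3\epsilon_0 c_j|\le\tfrac13(1-\eta)<\tfrac13$, giving $\overline{\phi_j(\tfrac13 H_j)}\subset\tfrac13\Db$; and rewriting $\rho z+\epsilon_0 c_j=\tfrac13(\rho z+3\epsilon_0 c_j)+\tfrac23(\rho z)$ as a convex combination yields $|\phi_j(z)|\le\tfrac13(1-\eta)+\tfrac23(1+\alpha_0)<1$ because $2\alpha_0<\eta$, giving $\overline{\phi_j(H_j)}\subset\Db$. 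It remains to check that the $H_j$ cover $\Db$: for $z\in\Db$ I would choose $j$ with $\mathrm{Re}(\overline{\rho z}\,c_j)\le-\delta|\rho z|$ and expand $|\rho z+3\epsilon_0 c_j|^2=|\rho z|^2+6\epsilon_0\mathrm{Re}(\overline{\rho z}\,c_j)+9\epsilon_0^2|c_j|^2$; the case $|z|\le\tfrac12$ is immediate, and for $|z|\ge\tfrac12$, since $t\mapsto t^2-6\epsilon_0\delta t$ is increasing on $[\tfrac12,1+\alpha_0]$ for $\epsilon_0$ small, one obtains a bound $1+2\alpha_0-6\epsilon_0\delta+O(\epsilon_0^2+\alpha_0^2)$, which is $<(1-\eta)^2$ with $\eta=\epsilon_0^2$, $\alpha_0=\epsilon_0^3$ and $\epsilon_0$ small.

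The step requiring the most care is exactly this bookkeeping of the three parameters: the ``gain towards the origin'' furnished by the translations is only of order $\epsilon_0\delta$, so both the margin $\eta$ and the slack $\alpha_0$ in $|\rho|$ must be of strictly higher order in $\epsilon_0$ rather than fixed fractions of $\epsilon_0$ — otherwise the covering of $\Db$ by the $H_j$ breaks down. I would also point out that the $H_j$ genuinely depend on $\rho$, and that this is unavoidable: as $\rho$ runs over the annulus $1-\alpha_0\le|\rho|\le1+\alpha_0$, the points $\rho z$ fill a full annulus about $0$, so any disc-type $H_j$ chosen independently of $\rho$ would necessarily miss a neighbourhood of $\partial\Db$ and fail to cover. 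Everything else reduces to elementary estimates in the plane.
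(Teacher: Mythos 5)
Your proof is correct, and it rests on the same geometric input as the paper --- namely that $c_1+c_2+c_3=0$ together with non-alignment forces the origin to be interior to the triangle, so from every direction one of the $c_j$ points strictly inwards (the paper phrases this as the three half-planes $\{\mathrm{Re}(zc_j^{-1})>0\}$ covering $\Cb^*$, you phrase it as a uniform $\delta>0$ with $\max_j\mathrm{Re}(\overline u\,c_j)\ge\delta$ on the unit circle, obtained by compactness). Where you genuinely diverge is in the construction of the $H_j$ and the bookkeeping. The paper takes $H_j$ to be the half-disc $\tfrac12\Db$ union an angular sector $\{z\in\Db:\ |\arg(-z\rho c_j^{-1})|<\pi/2-\eta\}$, checks $\overline{\phi_j(H_j)}\subset\Db$ by a cosine-rule estimate at the extremal boundary point, asserts that the $\tfrac13$-inclusion follows by ``the same arguments'', and obtains $\overline\Db\subset\cup_j\phi_j(\Db)$ by running the argument for $\phi_j^{-1}$. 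You instead take $H_j$ as the sublevel set $\{z\in\Db:\ |\rho z+3\epsilon_0c_j|<1-\eta\}$, which makes the inclusion $\overline{\phi_j(\tfrac13H_j)}\subset\tfrac13\Db$ automatic by homogeneity, reduces $\overline{\phi_j(H_j)}\subset\Db$ to the one-line convex-combination estimate needing only $2\alpha_0<\eta$, and turns both coverings ($\overline\Db$ by the $\phi_j(\Db)$, and $\Db$ by the $H_j$) into direct inner-product expansions; your explicit hierarchy $\alpha_0=\epsilon_0^3\ll\eta=\epsilon_0^2\ll\epsilon_0$ replaces the paper's ``choose $\eta$, then $\epsilon_0$, then $\alpha_0$'' and makes the quantitative dependence transparent. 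Your $H_j$, like the paper's, depend on $\rho$, which the statement (and the paper's subsequent remark) allows; incidentally $\phi_j(H_j)$ in your construction depends only on $|\rho|$, so nothing downstream is lost. The only cosmetic imprecision is that in the covering of $\Db$ by the $H_j$ the relevant range of $t=|\rho z|$ is $[(1-\alpha_0)/2,\,1+\alpha_0]$ rather than $[\tfrac12,1+\alpha_0]$, but since $t\mapsto t^2-6\epsilon_0\delta t$ is increasing on all of $[3\epsilon_0\delta,\infty)$ the bound at the upper endpoint is unaffected.
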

\begin{proof}
Since $c_1,$ $c_2$ and $c_2$ define a non degenerated triangle with $c_1+c_2+c_3=0$ it follows that the three half-planes $\{z\in\Cb\,|\, \mathrm{Re}(zc_j^{-1})>0\}$ cover $\Cb^*.$ Therefore, for $\eta>0$ small enough and $\rho\in\Cb^*$ the sets
$$H_j=(2^{-1}\Db)\cup\{z\in\Db\,|\, |\arg(-z\rho c_j^{-1})|<\pi/2-\eta\}$$
satisfy $\Db=\cup_{j=1}^3H_j.$ Here, $\arg(z)$ denotes the argument of $z$ with value in $]-\pi,\pi].$ On the other hand, if $\epsilon_0>0$ and $\alpha_0>0$ are small enough then $\overline{\phi_j(H_j)}\subset\Db.$ Actually, to see this for $j=1$ we conjugate by a rotation in order to have $c_1\in\Rb_{>0}.$ Then $\overline{\phi_1(H_1)}\subset\Db$ is equivalent to $\phi_1(-e^{i(\pi/2-\eta)-\arg(\rho)})\in\Db$ which holds if $(1+\alpha_0)^2+(\epsilon_0c_1)^2-2\epsilon_0c_1(1+\alpha_0)\cos(\pi/2-\eta)<1.$ This inequality is satisfied if first we choose $\epsilon_0>0$ small and then $\alpha_0>0$ close to $0.$ The last inclusion follows with exactly the same arguments.

To conclude, observe that we can apply the proof above to $\phi_j^{-1}$ (with the adequate $H_j$) which gives $\overline\Db\subset\cup_{j=1}^3\phi_j(\Db).$
\end{proof}
Observe that $H_j$ depends on the argument of $\rho$ but not $\phi_j(H_j).$ Hence, for simplicity we will not notice this dependency.

\subsection{Repelling type}
For $\delta>0,$ define the cone $C_\delta:=\{(a,b)\in\Cb^2\, |\, |a|<\delta|b|\}.$ The natural identification between the tangent space of $x\in\Cb^2$ and $\Cb^2$ allows us to consider $C_\delta$ as a (constant) cone field over $\Cb^2.$ We say that a smooth self-map $g$ of $\Cb^2$ \textit{contracts the cone field} $C_\delta$ on $U\subset\Cb^2$ if there exists $0<\delta'<\delta$ such that $D_xg(C_\delta)\subset C_{\delta'}$ for all $x\in U.$

In order to state the next result we choose three arbitrary open sets $H_1,H_2,H_3\subset\Cb$ and we set $H:=\cup_{i=1}^3H_i$ and $Z^l:=\cup_{i=1}^3
(H_i\times V_i^l).$
\begin{proposition}\label{prop-inter-robuste}
Let $K$ be a compact subset of $H.$ There exist $\delta_0>0$ and $l_0\geq1$ such that if $l\geq l_0$ and $g\colon Z^l\to K\times\Cb$ is a holomorphic map of the form $g(z,w)=(h(z,w),q^{lm_1}(w))$ which contracts $C_{\delta_0}$ then there is $\alpha>0$ with the following property. If $f$ is another holomorphic map defined on $Z^l$ such that $\|f-g\|_{C^1,Z^l}\leq\alpha$ and if $\sigma\colon V^l_i\to H_i$ is holomorphic with $\|\sigma'\|_{C^0,V^l_i}<\delta_0$ then $\Lambda_f:=\cap_{n\geq0}f^{-n}(\overline{Z^l})$ intersects the vertical graph $\Gamma_\sigma:=\{(\sigma(w),w)\,|\, w\in V^l_i\}$ of $\sigma.$
\end{proposition}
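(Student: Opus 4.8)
The plan is to show that the maximal invariant set $\Lambda_f$ meets every vertical graph $\Gamma_\sigma$ by building a nested sequence of graphs whose intersection with $\Lambda_f$ is non-empty, using the covering property of $q^{lm_1}$ on the $V_i^l$ (Lemma \ref{le-vois-rep}) together with cone contraction to keep everything under control. First I would fix $\delta_0$ so small that, for $g$ of the stated form which contracts $C_{\delta_0}$, the horizontal direction of $g$ is still strictly expanded: writing $g(z,w)=(h(z,w),q^{lm_1}(w))$, the bound $|(q^{lm_1})'|\geq A\chi_i^l$ from Lemma \ref{le-vois-rep} on $V_i^l$ means that after applying $Dg$ a vector in $C_{\delta_0}$ is mapped into $C_{\delta_0}$ and its horizontal component is not too contracted; more precisely I want the \emph{pull-back} of a graph with slope $<\delta_0$ under $g$ (restricted to an appropriate piece) to again be a graph with slope $<\delta_0$ over a full $V_i^l$. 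This is where the third bullet of Lemma \ref{le-vois-rep} enters: $q^{lm_1}$ restricts to a biholomorphism from $W_{ij}^l=V_i^l\cap q^{-lm_1}(V_j^l)$ onto $V_j^l$, so one can pull back a graph over $V_j^l$ to a graph over $W_{ij}^l$, and the second bullet $\overline{\cup V_k^l}\subset q^{lm_1}(V_i^l)$ guarantees that this pull-back, for a suitable choice of the index $j$, lands in $Z^l$ and covers some $V_i^l$.

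The core of the argument is the following graph transform. Given a holomorphic $\sigma\colon V_j^l\to H_j$ with $\|\sigma'\|_{C^0}<\delta_0$, its graph $\Gamma_\sigma$ sits in $H_j\times V_j^l\subset Z^l$. For each $i$ such that $\overline{V_j^l}\subset q^{lm_1}(V_i^l)$ — by the second bullet of Lemma \ref{le-vois-rep} this holds for \emph{every} $i$ — the preimage $f^{-1}(\Gamma_\sigma)\cap(H_i\times V_i^l)$ is, when $\alpha$ is small, a holomorphic vertical graph over $V_i^l$: indeed for $g$ this follows from the implicit function theorem applied to $h(z,w)=\sigma(q^{lm_1}(w))$ using that $h$ is a contraction in $z$ transverse to the graph direction (cone contraction of $C_{\delta_0}$ gives exactly that $\partial_z h$ is small, hence $z\mapsto h(z,w)$ is a contraction and the fixed-point equation $z=h^{-1}$-type solves uniquely), and for $f$ with $\|f-g\|_{C^1}\le\alpha$ it persists by a standard perturbation of the graph transform, with the new slope still $<\delta_0$ after possibly shrinking $\delta_0$ and $\alpha$. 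Thus we get, for each of the three indices, a well-defined map $\sigma\mapsto\sigma_i$ sending $\delta_0$-slope graphs over $V_j^l$ to $\delta_0$-slope graphs over $V_i^l$, with the property that $\Gamma_{\sigma_i}\subset f^{-1}(\Gamma_\sigma)$.

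Now I would iterate. Starting from the given $\sigma$ over $V_i^l$, choose at each step an admissible index and form $\sigma^{(n)}$, a $\delta_0$-slope graph over some $V_{i_n}^l$ with $f^n(\Gamma_{\sigma^{(n)}})\subset\Gamma_\sigma$ and, more importantly, $\Gamma_{\sigma^{(n)}}\subset\bigcap_{k=0}^{n}f^{-k}(\overline{Z^l})$. Because the vertical diameters $\mathrm{diam}(V_{i_n}^l)$ are uniformly bounded and $q^{lm_1}$ is uniformly expanding on the $V^l$'s (first bullet of Lemma \ref{le-vois-rep}), the sets $\bigcap_{k=0}^n f^{-k}(\overline{Z^l})$ form a decreasing sequence of non-empty compact sets; taking a limit point of any sequence $x_n\in\Gamma_{\sigma^{(n)}}$ with $x_n$ having the same first coordinate as we track the graphs over a fixed $V_i^l$ — here one keeps $i_n\equiv i$ is not possible, but one can extract a nested subsequence of \emph{graphs over $V_i^l$} since there are only three indices, so infinitely many $n$ give graphs over the same $V_i^l$, and these graphs converge uniformly on compacts of $V_i^l$ by Montel, the limit $\sigma^{(\infty)}$ being again a $\delta_0$-slope graph — we conclude $\Gamma_{\sigma^{(\infty)}}\subset\Lambda_f$ and $\Gamma_{\sigma^{(\infty)}}\cap\Gamma_\sigma\neq\emptyset$ provided $\sigma^{(\infty)}$ is a graph over $V_i^l$ with $i$ the original index. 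To arrange the last point cleanly, one simply notes that at the first step, among the three admissible $i$'s, one may always choose $i$ equal to the original index of $\sigma$ (legitimate since the covering in the second bullet is over all $i$), so the whole tower can be built over the single set $V_i^l$, and then $\Gamma_{\sigma^{(\infty)}}$ and $\Gamma_\sigma$ are both graphs over $V_i^l$ contained in $\overline{Z^l}$, hence — being a decreasing intersection argument or, more simply, since $\sigma^{(\infty)}=\lim f^{-n}\sigma$ along the constant symbol $iii\cdots$ is precisely the (unique) invariant graph — they coincide at least on the relevant fiber, giving the desired intersection point of $\Lambda_f$ with $\Gamma_\sigma$.

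The main obstacle I expect is making the graph transform genuinely well-defined and slope-decreasing under the $C^1$-perturbation $f$ of $g$: one must verify that pulling back by $f$ (not by the skew product $g$, whose vertical dynamics is exactly $q^{lm_1}$) still produces a \emph{single-valued} vertical graph over the full $V_i^l$ and not merely over a subdomain, which requires the expansion constant $A\chi_i^l$ to dominate both the cone aperture $\delta_0$ and the perturbation size $\alpha$ — this forces the order in which the constants are chosen ($\delta_0$ first, then $l_0$ large using $A\chi_i^l\to\infty$, then $\alpha$), and is the reason the statement quantifies them in that order. A secondary technical point is uniform control of the slopes along the infinite iteration so that the Montel limit is again admissible; this is handled by checking the graph transform is a contraction on the space of $\delta_0$-slope graphs in the appropriate $C^0$ metric, a routine but essential computation I would not carry out in detail here.
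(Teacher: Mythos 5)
There is a genuine gap, and it occurs at two places; in fact your argument runs in the opposite direction to what the statement requires. First, the pull-back graph transform you base everything on is not well defined under the hypotheses. The cone $C_{\delta_0}=\{(a,b):|a|<\delta_0|b|\}$ is the \emph{near-vertical} cone, and its contraction by $Dg$ only expresses that the vertical expansion of $q^{lm_1}$ dominates the horizontal derivatives; it does not imply that $\partial_z h$ is small, so $z\mapsto h(z,w)$ need not be a contraction and the equation $h(z,w)=\sigma(q^{lm_1}(w))$ need not have a (unique) solution in $H_i$. In the repelling application $h(\cdot,w)$ is close to $z\mapsto\rho z+\epsilon_0c_j$ with $|\rho|>1$, and the hypotheses of the proposition even allow $h$ to be constant in $z$ (a constant $h$ contracts $C_{\delta_0}$), in which case $f^{-1}(\Gamma_\sigma)\cap(H_i\times V_i^l)$ is typically empty and your map $\sigma\mapsto\sigma_i$ does not exist. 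Even when the equation is solvable, the natural base of the pulled-back graph is $W^l_{ij}=V_i^l\cap q^{-lm_1}(V_j^l)$, not all of $V_i^l$. Second, and more seriously, even granting the construction, the final step fails: the backward limit $\Gamma_{\sigma^{(\infty)}}$ is indeed contained in $\Lambda_f$, but two vertical graphs over the same $V_i^l$ need not intersect (two distinct constant graphs are disjoint), and no uniqueness or invariance principle forces the \emph{arbitrary} admissible $\sigma$ to agree with your invariant limit graph at any point. The assertion that they ``coincide on the relevant fiber'' is exactly the content one has to prove, and it is false as stated.

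The proof in the paper goes forward, not backward. Since $h(Z^l)\subset K$ with $K$ compact in $H$, every point of $K$ has a disc of radius $2\eta$ inside some $H_i$; the cone condition bounds the horizontal diameter of $g(\Gamma_\sigma)$ by $\delta_0\widetilde C$ for a uniform $\widetilde C$ (uniform in $l\geq l_0$, which is why $\delta_0$ can be fixed before $l$). Hence, restricting over an inverse branch $\psi_{i_0i_1}$ of $q^{lm_1}$, the image $g(\Gamma_\sigma)$ contains an admissible sub-graph $\Gamma_{\sigma_1}$ over some $V^l_{i_1}$ lying in $H_{i_1}\times V^l_{i_1}$; note the index $i_1$ is dictated by where $h(\sigma(r_{i_0}),r_{i_0})$ lands, so contrary to your last step you cannot choose it equal to the original index. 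Iterating produces a symbol sequence $(i_n)$ and nested base domains $\psi_{i_0i_1}\circ\cdots\circ\psi_{i_{n-1}i_n}(V^l_{i_n})\subset V^l_{i_0}$ shrinking to a point $w$, and $x=(\sigma(w),w)$ is a point of the \emph{given} graph whose whole forward orbit stays in $\overline{Z^l}$, i.e. $x\in\Lambda_g\cap\Gamma_\sigma$; the $2\eta$ slack then absorbs a $C^1$-perturbation $f$ of $g$ of size $\alpha$ depending on $g$. This forward scheme is the missing idea in your proposal.
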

The same statement holds for smooth maps $f,$ $g,$ and $\sigma$ but we only state it for holomorphic maps for convenience. Observe that the important point in this result for what follows is that $\delta_0$ is independent of $l\geq l_0.$
\begin{proof}
Let $K$ be a compact subset of $H$ and let $l_0\geq1$ be as in Lemma \ref{le-vois-rep}. We will show that if $\delta_0>0$ is small enough and $l\geq l_0$ then the proposition holds. Let $l\geq l_0$ and $g(z,w)=(h(z,w),q^{lm_1}(w))$ be as above. For simplicity, we first consider the unperturbed case. Let $i_0\in\{1,2,3\}$ and let $\sigma\colon V^l_{i_0}\to H_{i_0}$ be a holomorphic map with $\|\sigma'\|_{C^0,V^l_{i_0}}<\delta_0.$ Since $K$ is a compact subset of $\cup_{i=1}^3H_i,$ there exists $\eta>0$ such that if $z\in K$ then there is $i\in\{1,2,3\}$ with $D(z,2\eta)\subset H_i.$ In particular, such ${i_1}$ exists for $h(\sigma(r_{i_0}),r_{i_0})$ since $h(Z^l)\subset K.$ We denote by $\psi_{ij}\colon V_{j}^l\to W^l_{ij}\subset V^l_{i}$ the inverse branch of $q^{lm_1}$ given by Lemma \ref{le-vois-rep}. Define $\sigma_1$ on $V^l_{i_1}$ by $\sigma_1(w):=h(\sigma\circ\psi_{{i_0}{i_1}}(w),\psi_{{i_0}{i_1}}(w)).$ By definition, the graph $\Gamma_{\sigma_1}$ is contained in the image by $g$ of the graph $\Gamma_\sigma.$ Since the vectors tangent to $\Gamma_\sigma$ belongs to $C_{\delta_0}$ and $g$ contracts $C_{\delta_0}$ the same holds for vectors tangent to $g(\Gamma_\sigma)$ which contains $\Gamma_{\sigma_1}$ i.e. $\|\sigma_1'\|_{C^0,V^l_{i_1}}<\delta_0.$ We claim that if $\delta_0$ is small enough (uniformly on $l\geq l_0$) then $\sigma_1(V_{i_1}^l)\subset H_{i_1}.$ This simply comes from the fact that $g(\Gamma_\sigma)$ is tangent to $C_{\delta_0}$ but since this point is important we give the details.

By the proof of Lemma \ref{le-vois-rep}, $q^{(l-l_0)m_1}$ is a biholomorphism between $V^l_{i_0}$ and $V^{l_0}_{i_0}$ and we denote by $\phi$ its inverse. On the other hand, there exists a constant $C>0$ such that if $w\in V_{i_0}^{l_0}$ then there is a smooth path $\gamma$ between $r_{i_0}$ and $w$ of length smaller than $C.$ The length of $\widetilde\gamma:=q^{l_0m_1}\circ\gamma$ is also bounded by a uniform constant $\widetilde C>0.$ If $\widehat\gamma(t):=h(\sigma\circ\phi\circ\gamma(t),\phi\circ\gamma(t))$ then $t\mapsto(\widehat\gamma(t),\widetilde\gamma(t))$ gives a path on $g(\Gamma_\sigma).$ Hence, the length of $\widehat\gamma$ is bounded by $\delta_0$ times the one of $\widetilde\gamma$ and thus by $\delta_0\widetilde C.$ Since $w\in V^{l_0}_{i_0}$ was arbitrary it follows that the image of $V^l_{i_0}$ by $w\mapsto h(\sigma(w),w)$ is contained in $D(h(\sigma(r_{i_0}),r_{i_0}),\delta_0\widetilde C).$ Therefore, if $\delta_0<\eta\widetilde C^{-1}$ then $D(h(\sigma(r_{i_0}),r_{i_0}),\delta_0\widetilde C)\subset D(h(\sigma(r_{i_0}),r_{i_0}),\eta)\subset H_{i_1}$ which gives the claim.

We can now conclude the proof in the unperturbed case. As $g(\Gamma_\sigma)$ contains the graph of a map $\sigma_1$ with the same properties than $\sigma,$ we can define inductively a sequence $(i_n)_{n\geq1}$ of $\{1,2,3\}$ and maps $\sigma_n\colon V_{i_n}^l\to H_{i_n}$ such that $\Gamma_{\sigma_n}\subset g(\Gamma_{\sigma_{n-1}}).$ By construction, $\Gamma_{\sigma_n}$ is contained in $Z^l$ and is the image by $g^n$ of the graph over $V^l_{i_n}$ of $\sigma\circ\psi_{i_0i_1}\circ\cdots\circ\psi_{i_{n-1}i_n}.$ The sequence of subsets of $V^l_{i_0}$ given by $\psi_{i_0i_1}\circ\cdots\circ\psi_{i_{n-1}i_n}(V^l_{i_n})$ is decreasing and converges to a point $w\in V^l_{i_0}.$ Thus, the point $x:=(\sigma(w),w)\in\Gamma_\sigma$ satisfies $g^n(x)\in\Gamma_{\sigma_n}\subset Z^l$ for all $n\geq0,$ i.e. $x\in\Lambda_g\cap\Gamma_\sigma.$

The perturbed case then follows easily since the size $\alpha$ of the perturbations can depend on $g$ and thus on $l.$ If $f$ is sufficiently $C^1$-close to $g$ then the image of $\Gamma_\sigma$ by $f$ is a small deformation of $g(\Gamma_\sigma)$ and thus contains a graph of a map $\tau_1$ defined on $V^l_{i_1}$ which is close to $\sigma_1.$ Since $\sigma_1(V^l_{i_1})\subset D(h(\sigma(r_{i_0}),r_{i_0}),\eta)\subset D(h(\sigma(r_{i_0}),r_{i_0}),2\eta)\subset H_{i_1}$ then $\tau_1(V^l_{i_1})\subset H_{i_1}$ if $f$ is sufficiently $C^1$-close to $g.$ Then, following this procedure we construct a sequence of maps $\tau_n$ such that $\Gamma_{\tau_n}\subset f(\Gamma_{\tau_{n-1}})\cap Z^l$ and conclude the proof as above.
\end{proof}

In other words, Proposition \ref{prop-inter-robuste} says that the set $\Lambda_f$ intersects any vertical graph tangent to $C_{\delta_0}$ in $H_i\times V^l_i.$ In particular, with $\sigma$ constant we obtain that $H$ is contained in the projection of $\Lambda_f$ on the horizontal direction. However, Proposition \ref{prop-inter-robuste} gives a much more robust property which will be the key ingredient to prove Corollary \ref{coro-bif}. Actually, this property can be seen as the main part of the definition of blenders of repelling type.
\begin{definition}\label{def-blen-rep}
If $f$ and $\Lambda_f$ are as in Proposition \ref{prop-inter-robuste} and if $\Lambda_f$ is contained in a repelling hyperbolic set of $f$ then we say that $\Lambda_f$ is a blender of repelling type.
\end{definition}
In the sequel, we will use such blenders to ``blend'' robustly the postcritical set $P(f)$ of a map $f$ with its small Julia set $\Jc_k(f).$ We will obtain them as infinite intersections $\Lambda_f=\bigcap_{n\geq0}f^{-n}(\overline{Z^l}),$ for some set $Z^l.$ Thus, $f(\Lambda_f)\subset\Lambda_f$ but without equality in general. Moreover, such intersections of compacts do not behave well under perturbations. It is for these reasons that we require $\Lambda_f$ to be contained in a hyperbolic repeller since these sets are invariant and can be followed holomorphically under perturbations. 

In what follows, we obtain blenders of repelling type using the subsets defined in Lemma \ref{le-vois-rep} and Lemma \ref{le-triangle}. As above, let $c_1,$ $c_2$ and $c_3$ be three non-aligned points in $\Cb$ such that $c_1+c_2+c_3=0.$ It follows from Proposition \ref{prop-inter-robuste} that good perturbations of the model map $(z,w)\mapsto(\rho z+\epsilon_0c_j,q^{m_1l}(w))$ on $2\Db\times V_j^l$ have a blender.

\begin{proposition}\label{prop-exi-rep}
Let $\epsilon_0>0,$ $\alpha_0>0$ be as in Lemma \ref{le-triangle}. There exist $l_0\geq1$ and $\delta_0>0$ such that if $l\geq l_0$ and $\rho\in\Cb$ satisfies $|\rho|=1+\alpha_0$  then a polynomial skew product $g$ of the form $g(z,w)=(h(z,w),q^{m_1l}(w))$ with
\begin{itemize}
\item[i)] $|h(z,w)-(\rho z+\epsilon_0 c_j)|\leq\delta_0$ for all $(z,w)\in2\Db\times V_j^{l}$ and $j\in\{1,2,3\},$
\item[ii)] $g$ contracts the cone field $C_{\delta_0}$ on $\overline{\cup_{j=1}^3\Db\times V^{l}_j},$
\end{itemize}
has two nested blenders of repelling type $\Lambda'_g\subset\Lambda_g.$ More precisely, there exist subsets $H_1,$ $H_2$ and $H_3$ of $\Db$ such that $1/2\Db\subset\cap_{j=1}^3H_j$ and
$$\Lambda'_g=\bigcap_{i\geq0}g^{-i}\left(\overline{\bigcup_{j=1}^3\left(\frac{1}{3}H_{j}\right)\times V^l_j}\right)\ \text{ and }\ \Lambda_g=\bigcap_{i\geq0}g^{-i}\left(\overline{\bigcup_{j=1}^3H_{j}\times V^l_j}\right).$$
\end{proposition}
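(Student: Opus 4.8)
The plan is to obtain both blenders as instances of Proposition~\ref{prop-inter-robuste}, applied at the two scales $\Db$ and $\tfrac{1}{3}\Db$, and then to check directly that the two sets produced lie inside one common hyperbolic repeller. First I would fix the data of Lemma~\ref{le-triangle} for the value $|\rho|=1+\alpha_0$: the maps $\phi_j(z)=\rho z+\epsilon_0 c_j$ and open sets $H_1,H_2,H_3\subset\Db$ with $\Db=\cup_jH_j$, $\tfrac{1}{2}\Db\subset\cap_jH_j$ (read off the explicit form of the $H_j$ in the proof of that lemma), $\overline{\phi_j(H_j)}\subset\Db$ and $\overline{\phi_j(\tfrac{1}{3}H_j)}\subset\tfrac{1}{3}\Db$. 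Since $\cup_j\overline{\phi_j(H_j)}$ is compact in the open set $\Db$, choose a compact $K$ with $\cup_j\overline{\phi_j(H_j)}\subset\operatorname{int}K\subset K\subset\Db$, and likewise a compact $K'$ with $\cup_j\overline{\phi_j(\tfrac{1}{3}H_j)}\subset\operatorname{int}K'\subset K'\subset\tfrac{1}{3}\Db$. Then apply Proposition~\ref{prop-inter-robuste} once with $H=\Db$, $Z^l=\cup_jH_j\times V_j^l$ and the compact $K$, and once with $H=\tfrac{1}{3}\Db$, $Z^l=\cup_j(\tfrac{1}{3}H_j)\times V_j^l$ and the compact $K'$; this furnishes constants of which I keep a common $\delta_0>0$ (chosen small enough that $2\delta_0$ lies below the cone widths supplied by the proposition, and small enough for the confinements below) and a common $l_0\ge1$ (large enough for both applications, to be enlarged once more).

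Let $g(z,w)=(h(z,w),q^{m_1l}(w))$ satisfy (i) and (ii) with these constants and some $l\ge l_0$. By (i) on $H_j\times V_j^l\subset2\Db\times V_j^l$ we get $h(H_j\times V_j^l)\subset N_{\delta_0}(\phi_j(H_j))\subset\operatorname{int}K$, so $g$ maps $\cup_jH_j\times V_j^l$ into $K\times\Cb$ and, similarly, $\cup_j(\tfrac{1}{3}H_j)\times V_j^l$ into $K'\times\Cb$. Next, the derivative of $g$ is upper triangular with diagonal entries $\partial_zh$ and $(q^{m_1l})'$ and upper-right entry $\partial_wh$; applying (ii) to the extremal vectors of $C_{\delta_0}$ forces $|\partial_wh|\le\delta_0\,|(q^{m_1l})'|$, and since $|(q^{m_1l})'|\ge A\chi_j^l$ on $V_j^l$ by Lemma~\ref{le-vois-rep} — as large as we wish once $l_0$ is large — a one-line slope estimate gives $D_xg(C_\delta)\subset C_{\delta_0}$ for every $\delta\ge2\delta_0$, in particular for the cone widths output by Proposition~\ref{prop-inter-robuste}. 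Hence that proposition applies in both configurations and shows that
\[
\Lambda_g=\bigcap_{i\ge0}g^{-i}\Big(\overline{\textstyle\bigcup_jH_j\times V_j^l}\Big)\qquad\text{and}\qquad\Lambda'_g=\bigcap_{i\ge0}g^{-i}\Big(\overline{\textstyle\bigcup_j(\tfrac{1}{3}H_j)\times V_j^l}\Big)
\]
are nonempty (take constant graphs) and each meets, robustly under $C^1$-perturbations of $g$, every vertical graph of slope $<\delta_0$ over any $V_i^l$; moreover $\Lambda'_g\subset\Lambda_g$ since $\tfrac{1}{3}H_j\subset H_j$.

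It remains to exhibit a repelling hyperbolic set of $g$ containing $\Lambda_g$, hence $\Lambda'_g$. Take $\widetilde\Lambda:=\bigcap_{i\ge0}g^{-i}\big(\overline{\cup_j\Db\times V_j^l}\big)$; then $\Lambda'_g\subset\Lambda_g\subset\widetilde\Lambda$ because $\tfrac{1}{3}H_j\subset H_j\subset\Db$, and $\widetilde\Lambda$ is compact with $g(\widetilde\Lambda)\subset\widetilde\Lambda$. Along $\widetilde\Lambda$ the $w$-coordinate stays forever in $\overline{\cup_jV_j^l}$, hence lies in the hyperbolic repeller $\Lambda^l$ of $q^{m_1l}$, in particular in some $W^l_{ij}$ on which $q^{m_1l}$ is a biholomorphism by Lemma~\ref{le-vois-rep}, so $(q^{m_1l})'\neq0$ there; and $|\partial_zh|\ge1+\alpha_0-\delta_0>1$ by (i) together with a Cauchy estimate in the $z$-variable (which ranges over $2\Db$, leaving room for the estimate). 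Thus $Dg$ is invertible near $\widetilde\Lambda$, and on the relevant inverse branch its operator norm is at most $(1+\delta_0)(1+\alpha_0-\delta_0)^{-1}+(A\chi_j^l)^{-1}$, using $|\partial_wh|\le\delta_0\,|(q^{m_1l})'|$; this is $\le\lambda<1$ uniformly on $\widetilde\Lambda$ as soon as $\delta_0<\alpha_0/2$ and $l_0$ is large. Iterating along the natural extension of $\widetilde\Lambda$ gives $\|(Dg^n)^{-1}\|\le\lambda^n$, which is exactly the expansion estimate defining a repelling hyperbolic set (with $E^u$ the full tangent bundle and $E^s=0$). Therefore $\Lambda'_g\subset\Lambda_g$ are nested blenders of repelling type with the stated descriptions, and $\tfrac{1}{2}\Db\subset\cap_jH_j$ holds as claimed; this proves the proposition.

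The step I expect to be delicate is not any individual estimate but the ordering of the quantifiers: $\delta_0$ must be chosen small enough that hypothesis (i) already confines $g$ into the compacts $K,K'$ and that $2\delta_0$ stays below the cone widths that Proposition~\ref{prop-inter-robuste} demands, after which $l_0$ must be taken large enough both to run that proposition, to upgrade the cone contraction~(ii) to those widths, and to turn $Dg^{-1}$ into a uniform contraction on $\widetilde\Lambda$. It is precisely the skew-product form with the very large vertical multiplier $A\chi_j^l$ that reconciles all three requirements for $l$ large while keeping $\delta_0$ fixed — the uniformity in $l$ highlighted in the remark after the statement.
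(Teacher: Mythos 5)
Your overall route is the paper's: apply Proposition \ref{prop-inter-robuste} twice, at the two scales $\Db$ and $\tfrac13\Db$, and then check that $\widetilde\Lambda:=\bigcap_{i\geq0}g^{-i}\bigl(\overline{\cup_{j=1}^3\Db\times V_j^l}\bigr)$ is a repelling hyperbolic set containing both intersections. Your treatment of the quantifiers is sound (the paper simply shrinks $\delta_0$, which is legitimate because the proof of Proposition \ref{prop-inter-robuste} works verbatim for any smaller constant; your alternative, upgrading hypothesis (ii) to wider cones via $|\partial_w h|\leq\delta_0|(q^{m_1l})'|$ and the largeness of $(q^{m_1l})'$, achieves the same, except that the image of $C_\delta$ lands in some $C_{\delta''}$ with $\delta''<\delta$, not in $C_{\delta_0}$ as you wrote), and your estimate $\|(Dg)^{-1}\|\leq(1+\delta_0)(1+\alpha_0-\delta_0)^{-1}+(A\chi_j^l)^{-1}<1$ is a correct and in fact more quantitative substitute for the paper's expansion argument.

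The genuine gap is the invariance of $\widetilde\Lambda$. You only record $g(\widetilde\Lambda)\subset\widetilde\Lambda$ and then speak of iterating ``along the natural extension of $\widetilde\Lambda$''. But a hyperbolic set in the sense of Section \ref{sec-back} is a compact \emph{invariant} set, and the reason Definition \ref{def-blen-rep} asks the blender to lie in a hyperbolic repeller is precisely that such sets are invariant and can be followed under perturbation; with forward invariance alone, points of $\widetilde\Lambda$ having no backward orbit inside $\widetilde\Lambda$ are simply not seen by its natural extension, so the required structure is not established on all of $\widetilde\Lambda$. This is exactly the step the paper proves and you omit: by Lemma \ref{le-triangle} one has $\overline\Db\subset\cup_{j=1}^3\phi_j(\Db)$, so there is $r>0$ such that every $z_0\in\overline\Db$ satisfies $D(z_0,r)\subset\phi_{j_0}(\Db)$ for some $j_0$; given $w_0\in\cup_jV_j^l$, Lemma \ref{le-vois-rep} provides $w_1\in V_{j_0}^l$ with $q^{m_1l}(w_1)=w_0$, and if $\delta_0<r$ then Rouch\'e's theorem yields $z_1\in\Db$ with $h(z_1,w_1)=z_0$. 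Hence $\overline{\cup_{j=1}^3\Db\times V_j^l}\subset g\bigl(\cup_{j=1}^3\Db\times V_j^l\bigr)$, which gives $g(\widetilde\Lambda)=\widetilde\Lambda$ and completes the verification that $\Lambda'_g\subset\Lambda_g\subset\widetilde\Lambda$ sit inside a repelling hyperbolic set. With this covering-plus-Rouch\'e step added, your argument coincides with the paper's proof.
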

The fact that we need two nested blenders is technical. The smallest one $\Lambda'_g$ could be replaced by any uncountable subset of $\Lambda_g$ whose holomorphic continuation for $f$ close enough to $g$ belongs to $\bigcap_{i\geq0}f^{-i}\left(\overline{\cup_{j=1}^3H_{j}\times V^l_j}\right).$
\begin{proof}
Let $\rho\in\Cb$ be such that $|\rho|=1+\alpha_0.$ We consider the maps $\phi_j(z):=\rho z+\epsilon_0c_j$ and the three open sets $H_j,$ $j\in\{1,2,3\},$ as in Lemma \ref{le-triangle}. Since $\overline{\phi_j(H_j)}\subset\Db$ for each $j=1,2,3,$ there exists $\eta>0$ such that $\phi_j(H_j)\subset D(0,1-2\eta).$ Let $\delta_0>0$ and $l_0\geq1$ be the constants obtained by Proposition \ref{prop-inter-robuste} with $K=\overline{D(0,1-\eta)}.$ We can assume $\delta_0<\eta.$ It follows that if $l\geq l_0$ and $g$ satisfies the assumptions of the present proposition, it also satisfies those of Proposition \ref{prop-inter-robuste} on $\cup_{j=1}^3H_j\times V_j^l.$ By taking a smaller $\delta_0$ if necessary, Proposition \ref{prop-inter-robuste} also applies to $\cup_{j=1}^3(1/3H_j)\times V_j^l.$

On the other hand, $g$ is repelling on $\overline{\cup_{j=1}^3\Db\times V^l_j}.$ Indeed, its differential is equal to
$$\begin{pmatrix}
\partial_zh(z,w)&\partial_wh(z,w)\\
0&(q^{lm_1})'(w)
\end{pmatrix}.$$
By Lemma \ref{le-vois-rep}, if $l\geq l_0$ then $|(q^{lm_1})'(w)|>1$ on each $V_j^l.$ And since $|h(z,w)-\phi_j(z)|\leq\delta_0$ on $2\Db\times V_j^{l},$ Cauchy's inequality gives $|\partial_zh(z,w)-\rho|\leq\delta_0$ on $\overline{\Db\times V_j^{l}}.$ Thus, $|\partial_zh(z,w)|>1$ if $\delta_0$ is small enough. Furthermore, by Lemma \ref{le-triangle}, $\overline\Db\subset\cup_{j=1}^3\phi_j(\Db)$ and thus there exists $r>0$ such that for each $z_0\in\overline{\Db}$ there is $j_0\in\{1,2,3\}$ with $D(z_0,r)\subset\phi_{j_0}(\Db).$ If $w_0\in\cup_{j=1}^3V_j^l$ then Lemma \ref{le-vois-rep} gives $w_1\in V_{j_0}^l$ with $q^l(w_1)=w_0.$ Hence, if $\delta_0<r$ then by Rouché's theorem there is $z_1\in\Db$ such that $h(z_1,w_1)=z_0,$ i.e. $g(z_1,w_1)=(z_0,w_0)$ and thus $\overline{\cup_{j=1}^3\Db\times V_j^l}\subset g\left({\cup_{j=1}^3\Db\times V_j^l}\right).$ Hence, $\Lambda_g$ is contained in the invariant repelling set $\bigcap_{i\geq0}g^{-i}\left(\overline{\cup_{j=1}^3\Db\times V_j^l}\right).$
\end{proof}

\begin{remark}
An important point is that no estimates on $\partial_wh(z,w)$ are required i.e. $h$ can be far from $\phi_j$ in the $C^1$ topology as long as $g$ contracts the cone field $C_{\delta_0}.$ A priori, it is the situation that occurs in Section \ref{sec-cons}.
\end{remark}

\subsection{Saddle type}
On $\Pb^2$ the blenders of saddle type are simpler than the repelling ones. Indeed, no cone field condition is required since one stable direction is given by preimages on whose the contraction dominates any ``standard'' stable direction.
\begin{definition}\label{def-blen-selle}
Let $f$ be in $\Hc_d(\Pb^2)$ and let $Z$ be an open subset of $\Pb^2$ such that $\overline{Z}\subset f(Z).$ If for all $g\in\Hc_d(\Pb^2)$ sufficiently close to $f,$ $\Lambda_g:=\bigcap_{n\geq0}g^{-n}(\overline Z)$ is a saddle hyperbolic set then we say that $\Lambda_g$ is a blender of saddle type.
\end{definition}
This definition is artificially stable under small perturbations. The examples obtained with Proposition \ref{prop-exi-selle} will automatically satisfy this stability condition since they preserve a dominated splitting on a neighborhood of $\overline Z.$ Another way to fulfill this condition is that $\Lambda_f$ is contained in a locally maximal saddle hyperbolic set $\widetilde\Lambda_f.$ In this case, standard results (see \cite[Corollary 2.6]{jonsson-phd}) imply that $Z$ is contained in the unstable set of $\widetilde\Lambda_f.$ This remark will be use in the proof of Theorem \ref{th-cycle}.

The following result is the counterpart of Proposition \ref{prop-exi-rep} in the saddle case and its proof is identical.
\begin{proposition}\label{prop-exi-selle}
Let $\epsilon_0>0,$ $\alpha_0>0$ be as in Lemma \ref{le-triangle}. There exist $l_0\geq1$ and $\delta_0>0$ such that if $l\geq l_0$ and $\rho\in\Cb$ satisfies $|\rho|=1-\alpha_0$  then a polynomial skew product $g$ of the form $g(z,w)=(h(z,w),q^{m_1l}(w))$ with $|h(z,w)-(\rho z+\epsilon_0 c_j)|\leq\delta_0$ for all $(z,w)\in2\Db\times V_j^{l}$ and $j\in\{1,2,3\},$ has a blender of saddle type $\Lambda_g:=\cap_{n\geq0}g^{-n}(\overline{\cup_{i=1}^3\Db\times V^l_i}).$
\end{proposition}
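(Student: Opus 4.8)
The plan is to follow the proof of Proposition \ref{prop-exi-rep} almost word for word, the single change being that the weakly expanding horizontal multiplier $|\rho|=1+\alpha_0$ is replaced by the weakly contracting one $|\rho|=1-\alpha_0$, and that the cone condition ii) of Proposition \ref{prop-exi-rep} is no longer a hypothesis but must instead be \emph{extracted} from i). Writing $Z^l:=\bigcup_{i=1}^3\Db\times V_i^l$, by Definition \ref{def-blen-selle} it suffices to prove, for suitable $l\geq l_0$ and $\delta_0>0$, that (a) $\overline{Z^l}\subset g(Z^l)$, and (b) $\Lambda_g:=\bigcap_{n\geq0}g^{-n}(\overline{Z^l})$ is a hyperbolic set of saddle type coming from a dominated splitting defined, with uniform rates, on a whole neighborhood of $\Lambda_g$. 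Indeed (a) is an open condition, and by the standard stability of dominated splittings, (b) then forces $\bigcap_{n\geq0}(g')^{-n}(\overline{Z^l})$ to be again a saddle hyperbolic set for every $g'$ close to $g$ in $\Hc_d(\Pb^2)$, which is exactly Definition \ref{def-blen-selle}. As before I fix $\rho$ with $|\rho|=1-\alpha_0$, set $\phi_j(z):=\rho z+\epsilon_0c_j$ and take the open sets $H_j\subset\Db$ supplied by Lemma \ref{le-triangle}.

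Assertion (a) is copied verbatim from the last paragraph of the proof of Proposition \ref{prop-exi-rep}, which uses only the covering statements of Lemmas \ref{le-triangle} and \ref{le-vois-rep} together with Rouch\'e's theorem, never the sign of $|\rho|-1$: for $(z_0,w_0)\in\overline{Z^l}$ one picks $j_0$ with $D(z_0,r)\subset\phi_{j_0}(\Db)$ (Lemma \ref{le-triangle}) and $w_1\in V_{j_0}^l$ with $q^{lm_1}(w_1)=w_0$ (Lemma \ref{le-vois-rep}), and since $z\mapsto h(z,w_1)$ stays within $\delta_0<r$ of $\phi_{j_0}$ on $\Db$, Rouch\'e gives $z_1\in\Db$ with $g(z_1,w_1)=(z_0,w_0)$. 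For the elementary half of (b), the derivative
$$Dg(z,w)=\begin{pmatrix}\partial_zh(z,w)&\partial_wh(z,w)\\0&(q^{lm_1})'(w)\end{pmatrix}$$
is upper triangular, so the horizontal line field $E^s:=\Cb\times\{0\}$ is $Dg$-invariant; and Cauchy's inequality applied to i) on $2\Db\times V_j^l$ gives $|\partial_zh-\rho|\leq\delta_0$ on $\Db\times V_j^l$, whence $\|Dg|_{E^s}\|\leq|\rho|+\delta_0=1-\alpha_0+\delta_0<1$ as soon as $\delta_0<\alpha_0$, so $E^s$ is uniformly contracted. In the vertical direction Lemma \ref{le-vois-rep} gives $|(q^{lm_1})'(w)|\geq A\chi_i^l>1$ on $V_i^l$, a quantity which is arbitrarily large once $l\geq l_0$.

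What remains, namely producing the invariant unstable line field $E^u$ transverse to $E^s$, is the heart of the matter, and it amounts to showing that $g$ contracts a vertical cone field $C_\delta$ of width $\delta$ \emph{independent of $l$}; the subtlety is that $|\partial_wh|$ and $|(q^{lm_1})'|$ both grow like $|\chi_i|^l$ and must therefore be compared. After one iterate the $w$-coordinates of points of $\Lambda_g$ lie in the sets $W_{ij}^l=V_i^l\cap q^{-lm_1}(V_j^l)$; since, by the construction in Lemma \ref{le-vois-rep}, $q^{lm_1}$ is a biholomorphism from $V_i^l$ onto the fixed open set $V_i^0$ and $\overline{V_j^l}$ sits at distance bounded below by a constant inside $V_i^0$, the usual bounded-distortion (Koebe) estimates place $W_{ij}^l$ at distance $\asymp\mathrm{diam}(V_i^l)$ from $\partial V_i^l$. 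Hence Cauchy's inequality applied to i) on $2\Db\times V_i^l$ yields $|\partial_wh|\leq C\delta_0\,\mathrm{diam}(V_i^l)^{-1}$ on a neighborhood of $\Lambda_g$, whereas there $|(q^{lm_1})'|\geq A\chi_i^l\gtrsim\mathrm{diam}(V_i^l)^{-1}$, so that $|\partial_wh|/|(q^{lm_1})'|\leq M\delta_0$ with $M$ independent of $l$. A direct computation with the matrix above then shows that, for $l$ large, $Dg$ maps $C_{3M\delta_0}$ strictly inside $C_{2M\delta_0}$ and expands every vector of $C_{3M\delta_0}$ by a factor $>1$; consequently $E^u_{\widehat x}:=\bigcap_{n\geq0}D_{x_{-n}}g^{\,n}\big(C_{3M\delta_0}\big)$, for $\widehat x=(x_i)_{i\leq0}\in\widehat{\Lambda_g}$, is a continuous $Dg$-invariant, uniformly expanded line field transverse to $E^s$, and $\Lambda_g$ is hyperbolic of saddle type. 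Since $C_{3M\delta_0}$ is contracted on a whole neighborhood of $\Lambda_g$ the splitting is dominated there with uniform rates, hence survives $C^1$-small perturbations, which is (b); together with (a) this proves the proposition. (One may add that, $g$ contracting $C_{3M\delta_0}$, Proposition \ref{prop-inter-robuste} applies to $g$ on $\bigcup_jH_j\times V_j^l$ with $K=\overline{D(0,1-\eta)}$, which also endows $\Lambda_g$ with the robust vertical-graph-intersection property characteristic of a blender.) I expect the $l$-uniform cone estimate of this last paragraph to be the only genuinely non-routine ingredient; everything else is a transcription of the repelling case.
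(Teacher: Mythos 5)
Your proof is correct and takes essentially the route the paper intends: the paper's own ``proof'' is simply the declaration that it is identical to that of Proposition \ref{prop-exi-rep} (the same Rouch\'e covering argument) combined with the remark after Definition \ref{def-blen-selle} that these examples preserve a dominated splitting on a neighborhood of $\overline Z$, which is exactly your parts (a) and (b); your Cauchy-plus-distortion bound $|\partial_wh|\lesssim\delta_0\,\mathrm{diam}(V_j^l)^{-1}\lesssim\delta_0|(q^{lm_1})'|$ on the inner region just makes explicit the $l$-uniform domination the paper leaves implicit (and which is indeed what is used later, e.g.\ in Proposition \ref{prop-double-blender}). One cosmetic caveat: Lemma \ref{le-vois-rep} gives that $q^{(l-l_0)m_1}$ is a biholomorphism from $V_i^l$ onto the fixed set $V_i^{l_0}$ (not onto $V_i^{0}$, where injectivity may fail), but running your distortion argument through that map, with $\overline{V_j^{l_0+1}}$ as the fixed compact target, yields the same estimate $\mathrm{dist}(W_{ij}^l,\partial V_i^l)\asymp\mathrm{diam}(V_i^l)$.
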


\section{Existence near product bifurcations}\label{sec-cons}
It follows almost immediately from Proposition \ref{prop-exi-rep} and Proposition \ref{prop-exi-selle} that if $q$ has three non-aligned repelling periodic points which are not postcritical then for $l$ large some perturbations of $(z,w)\mapsto(z,q^l(w))$ possess blenders. The aim of this section is to generalize this to $(p(z),q(w))$ when $p$ has a parabolic cycle and $q$ is arbitrary.

In order to fix the setting and the notations of this section let $(p_\lambda)_{\lambda\in M}$ be a holomorphic family of degree $d$ polynomials parametrized by a complex manifold $M$ and such that $0$ is a periodic point of $p_\lambda$ for all $\lambda\in M.$ We denote by $m_0$ the period of $0$ and by $\rho(\lambda)$ its multiplier, i.e. $\rho(\lambda):=(p_\lambda^{m_0})'(0).$ We assume that there exists $\lambda_0\in M$ such that $\rho(\lambda_0)$ is a primitive $t_0$-th root of unity and we define, for $-1\leq i\leq m_1-1$ with $m_1:=m_0t_0,$
\begin{equation}\label{eq-bi}
b_i(\lambda):=(p_\lambda^{m_1-i-1})'(p_\lambda^{i+1}(0)).
\end{equation}
Observe that $b_{-1}(\lambda)=\rho(\lambda)^{t_0}$ and $b_i(\lambda)=\rho(\lambda)b_{i+m_0}(\lambda).$ We use this last formula to extend the definition of $b_i(\lambda)$ for $i\in\Zb$ if $\lambda$ is close to $\lambda_0.$ In particular,  $b_{-1}(\lambda_0)=1$ and $b_i(\lambda_0)\neq0$ for all $i\in\Zb.$ Finally, let $q$ be an element of $\Pc_d$ and let $r_1,$ $r_2$ and $r_3$ be three repelling points of period $m_1$ of $q$ which are not in the postcritical set of $q.$ We set
\begin{equation}\label{eq-ci}
c_j:=\sum_{i=0}^{m_1-1}b_i(\lambda_0)q^i(r_j)
\end{equation}
for $j\in\{1,2,3\}.$ We deduce from Proposition \ref{prop-exi-rep} and Proposition \ref{prop-exi-selle} the following result.
\begin{theorem}\label{th-constru}
Assume that the three complex numbers $c_1,$ $c_2$ and $c_3$ are not aligned and that $c_1+c_2+c_3=0.$ If $(\lambda_n)_{n\geq1}$ is a sequence in $M$ converging to $\lambda_0$ such that $|\rho(\lambda_n)|>1$ (resp. $|\rho(\lambda_n)|<1)$ then there exist a sequence $(l_n)_{n\geq1}$ of integers and a sequence $(\beta_n)_{n\geq1}$ of $\Cb^*$  such that $\lim_{n\to\infty}\beta_n=0$ and for all $\alpha\in\Db^*$ the $(m_1l_n)$-th iterate of
$$f_{n,\alpha}(z,w):=(p_{\lambda_n}(z)+\alpha\beta_nw,q(w))$$
has a blender $\Lambda_{n,\alpha}$ of repelling type (resp. of saddle type) for $n$ large enough.
\end{theorem}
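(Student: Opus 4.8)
The plan is to conjugate the map $f_{n,\alpha}$ to the model maps of Proposition \ref{prop-exi-rep} (resp. Proposition \ref{prop-exi-selle}) after passing to the $(m_1 l_n)$-th iterate, a change of coordinates in the horizontal variable, and a restriction to a suitable neighbourhood of $\{0\}\times(V_1^{l}\cup V_2^{l}\cup V_3^{l})$. The first step is to understand the iterate $f_{n,\alpha}^{m_1 l}$ near the point $(0, r_j)$. Because $0$ is periodic of period $m_0$ for $p_{\lambda_n}$ and $r_j$ is periodic of period $m_1 = m_0 t_0$ for $q$, and because $f_{n,\alpha}$ is a skew product over $w\mapsto q(w)$, a direct computation of the $m_1 l$-fold composition shows that the second coordinate becomes $q^{m_1 l}(w)$ — exactly the expanding map of Lemma \ref{le-vois-rep} — and the first coordinate is $p_{\lambda_n}^{m_1 l}(z)$ plus a term which is \emph{linear in $w$} with a coefficient obtained by telescoping the perturbation $\alpha\beta_n w$ through the chain rule: the contributions from step $i$ get multiplied by $(p_{\lambda_n}^{m_1 l - i - 1})'$ evaluated along the orbit of $0$, and by $q^i$ applied to $w$ on the source side. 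When $\lambda_n$ is close to $\lambda_0$, where $\rho(\lambda_0)$ is a primitive $t_0$-th root of unity so that $b_{-1}(\lambda_0) = 1$, the products $b_i(\lambda_n)$ over one period $m_1$ are close to $b_i(\lambda_0)$ and the accumulated coefficient, after summing the geometric-type series over the $l$ periods and one period within, is close to $\sum_{i=0}^{m_1-1} b_i(\lambda_0)\, q^i(w)$ up to a scalar depending on $l$ and $\rho(\lambda_n)$. Evaluating the $w$-dependence at $w = r_j$ (the centre of $V_j^l$) and using \eqref{eq-ci} produces exactly $c_j$ up to the same scalar, while the $z$-linear part $p_{\lambda_n}^{m_1 l}(z)$ near $z=0$ has derivative $\rho(\lambda_n)^{t_0 l}$, whose modulus is $>1$ (resp. $<1$).

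The second step fixes the scales. Since $|\rho(\lambda_n)|\to 1$, one chooses $l_n\to\infty$ so that $|\rho(\lambda_n)|^{t_0 l_n}$ is pinned to the target value $1+\alpha_0$ (resp. $1-\alpha_0$) of Lemma \ref{le-triangle}; this is possible up to an arbitrarily small error because consecutive powers differ by a factor tending to $1$. Then one rescales the horizontal coordinate $z\mapsto z/R_n$ for an appropriate $R_n$ and simultaneously absorbs a factor into $\beta_n$ so that the image of the unit bidisc lands correctly: the linear part becomes $z\mapsto \rho' z$ with $|\rho'| = 1\pm\alpha_0$, the constant-in-$w$ affine term at $w=r_j$ becomes $\epsilon_0 c_j$, and $\beta_n\to 0$ is forced by the fact that the natural size of the $w$-linear coefficient after dividing by $R_n$ and by the large derivative is of order $\beta_n$ times something bounded; one also needs $\alpha\beta_n$ small enough that the残差 term (the difference between $\sum b_i(\lambda_n) q^i(w)$ restricted to $w\in V_j^{l_n}$ and its value $c_j$ at $r_j$, together with the higher-order-in-$z$ part of $p_{\lambda_n}^{m_1 l_n}$) is bounded by $\delta_0$ on $2\Db\times V_j^{l_n}$. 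This is where the hypothesis that $r_j$ does not lie in the postcritical set of $q$ and the exponential shrinking of the $V_j^l$ from Lemma \ref{le-vois-rep} enter: on the tiny set $V_j^{l_n}$, the function $w\mapsto\sum_{i} b_i(\lambda_0) q^i(w)$ varies by an amount tending to $0$ as $l_n\to\infty$, so its oscillation is eventually below $\delta_0$.

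The third step is to check the cone condition in the repelling case. The rescaled iterate $f_{n,\alpha}^{m_1 l_n}$ has a block-triangular differential whose $(2,2)$ entry $(q^{m_1 l_n})'$ has modulus $\geq A\chi_i^{l_n}$, enormous compared to the $(1,1)$ entry $\rho'$ of modulus $\approx 1+\alpha_0$, while the $(2,1)$ entry vanishes identically (true skew product) and the $(1,2)$ entry is bounded (it is essentially $\alpha\beta_n$ times a derivative of the telescoped sum, hence small). A vector $(a,b)$ with $|a|<\delta_0|b|$ is mapped to $(\rho' a + (\partial_w h) b,\ (q^{m_1 l_n})' b)$; the ratio of first to second coordinate is bounded by $(|\rho'|\delta_0 + |\partial_w h|)/|(q^{m_1 l_n})'|$, which is $\ll\delta_0$ once $l_n$ is large because the denominator blows up. Hence $C_{\delta_0}$ is strictly contracted, uniformly for $\alpha\in\Db^*$, and assumptions i)–ii) of Proposition \ref{prop-exi-rep} hold for the rescaled iterate; one then transports the resulting blender $\Lambda_g$ back through the conjugating affine change of coordinates to obtain $\Lambda_{n,\alpha}$. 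In the saddle case the argument is identical but with $|\rho'| = 1-\alpha_0$ and Proposition \ref{prop-exi-selle}, which needs no cone estimate. The main obstacle, and the part requiring genuine care rather than routine estimation, is the bookkeeping in step one: showing that the telescoped perturbation coefficient, after the $w$-dependence is carried through $l_n$ periods, really converges (after the right normalization) to the specific combination $\sum_{i=0}^{m_1-1} b_i(\lambda_0) q^i(w)$ rather than to some other expression — i.e. that the definitions \eqref{eq-bi} and \eqref{eq-ci} were the correct ansatz. Everything else is a matter of choosing $l_n$, $R_n$, $\beta_n$ in the right order and invoking Cauchy estimates on the shrinking sets $V_j^{l_n}$.
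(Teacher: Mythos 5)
Your route is essentially the paper's: compute the $(m_1l)$-th iterate of the skew product after the horizontal renormalization (this is Proposition \ref{prop-esti}), choose $l_n$ so that $|\rho(\lambda_n)^{t_0l_n}|$ is pinned to $1\pm\alpha_0$, normalize the $w$-coefficient so that on $2\Db\times V_j^{l_n}$ the first coordinate is $\delta_0$-close to the model $\rho_n z+\epsilon_0c_j$ of Lemma \ref{le-triangle}, control the remainder by shrinking the perturbation parameter (this is what produces $\beta_n\to0$), and invoke Proposition \ref{prop-exi-rep} (resp.\ Proposition \ref{prop-exi-selle}); the saddle case indeed needs nothing more. However, your verification of the cone condition in the repelling case contains a genuine error. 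In the normalization in which the model acts on a unit-size disc (the conjugation by $\phi(z,w)=(sz,w)$, with $w$-coefficient $\epsilon_n:=\epsilon_0(\rho(\lambda_n)^{t_0}-1)/(\rho(\lambda_n)^{t_0l_n}-1)$), the $(1,2)$ entry of the differential of the iterate is, up to the $s\partial_wE_{l_n}$ remainder, $a_n(w)=\epsilon_n\sum_{i}b_i(\lambda_n)\sum_{k}\rho(\lambda_n)^{t_0k}(q^{i+(l_n-1-k)m_1})'(w)$, whose dominant term (the one with $k=0$) has size of order $|\epsilon_n|\chi_j^{l_n}$. Since pinning $|\rho(\lambda_n)^{t_0l_n}|$ to $1+\alpha_0$ forces $|\rho(\lambda_n)^{t_0}|-1$ to be of order $1/l_n$, one only has $|\epsilon_n|$ bounded below by $c/l_n$, so $|\epsilon_n|\chi_j^{l_n}\to\infty$: the off-diagonal entry is \emph{not} bounded, contrary to your parenthetical claim, and ``the denominator blows up'' is not by itself a proof, because the numerator blows up at the same exponential rate. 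What saves the estimate is that the ratio $|a_n(w)|/|(q^{m_1l_n})'(w)|\leq B|\epsilon_n|/A$ tends to $0$ thanks to the prefactor $\epsilon_n$ coming from the normalization (equivalently, from $\beta_n\to0$), the $s\partial_zE_{l_n}$ and $s\partial_wE_{l_n}$ terms being absorbed by taking $s_n$ smaller. This is precisely the point of the remark following Proposition \ref{prop-exi-rep}: no bound on $\partial_wh$ is available in this construction, only the cone contraction itself.

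A related over-simplification appears, harmlessly this time, in your $C^0$ step: the $w$-dependent term is not the fixed function $w\mapsto\sum_ib_i(\lambda_0)q^i(w)$ evaluated on a shrinking set, but the double sum $\epsilon_n\sum_ib_i(\lambda_n)\sum_k\rho(\lambda_n)^{t_0k}q^{i+(l_n-1-k)m_1}(w)$, which involves high iterates of $q$ with exponentially large derivatives. Its oscillation on $V_j^{l_n}$ is controlled because $q^{(l_n-1-k)m_1}(V_j^{l_n})=V_j^{k+1}$ has diameter at most $a_1a_2^{k+1}$, so the oscillation is bounded by a constant times $|\epsilon_n|\sum_k\bigl(a_2|\rho(\lambda_n)^{t_0}|\bigr)^k$, a bounded geometric series multiplied by $\epsilon_n\to0$. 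Your conclusion is correct, but only because of this cancellation between the expansion weights and the shrinking diameters, not merely because $V_j^{l_n}$ is small; spelling this out, and replacing the cone argument by the ratio estimate above, turns your sketch into the paper's proof.
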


To prove the theorem, we have to understand how a small perturbation of the product map $(p_\lambda(z),q(w))$ behaves under iterations near the line $\{z=0\}.$ To this aim we will change coordinates in order to focus only on the dynamics close to this line.

Let $\lambda\in M,$ $\epsilon\in\Cb^*$ and $s\in\Db^*.$ We consider $\lambda$ and $\epsilon$ as constants and $s$ as a variable. We define the skew product of $\Cb^2$
$$f(z,w):=(p_\lambda(z)+s\epsilon w,q(w))$$
and its renormalisation by $\phi(z,w):=(sz,w),$ $g:=\phi^{-1}\circ f\circ\phi.$ If $p_\lambda$ is of the form $p_\lambda(z)=\sum_{i=0}^da_i(\lambda)z^i$ then $g$ satisfies
$$g(z,w)=(a_0(\lambda)s^{-1}+a_1(\lambda)z+\epsilon w+sE(s,z,w),q(w)),$$
where $E$ is a polynomial. The following proposition gives estimates on the iterates of $g.$ 
\begin{proposition}\label{prop-esti}
If $l\geq1$ then there exists a polynomial $E_l$ in $s,$ $z$ and $w$ such that
$$g^{m_1l}(z,w)=\left(\rho(\lambda)^{t_0l}z+\epsilon\sum_{i=0}^{m_1-1}b_i(\lambda)\sum_{k=0}^{l-1}\rho(\lambda)^{t_0k}q^{i+(l-1-k)m_1}(w)+sE_l(s,z,w),q^{m_1l}(w)\right).$$
\end{proposition}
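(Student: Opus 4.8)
The plan is to proceed by induction on $l\geq1$, using the explicit form of $g$ obtained just above the statement together with the recursion $b_i(\lambda)=\rho(\lambda)b_{i+m_0}(\lambda)$ and $b_{-1}(\lambda)=\rho(\lambda)^{t_0}$. First I would set up the base case $l=1$. Here I need to iterate $g$ exactly $m_1=m_0t_0$ times. Since the second coordinate of $g$ is simply $w\mapsto q(w)$, after $m_1$ steps it becomes $q^{m_1}(w)$, which matches. For the first coordinate, I would track how the ``main'' (non-$s$) part evolves: at each step the $z$-coordinate is multiplied by the factor $a_1(\lambda)$ evaluated along the orbit — more precisely, by linearizing $p_\lambda$ along the periodic orbit $0,p_\lambda(0),\ldots,p_\lambda^{m_1-1}(0)$, iterating $m_1$ times multiplies the linear-in-$z$ part by $(p_\lambda^{m_1})'(0)=\rho(\lambda)^{t_0}$ (using that $0$ has period $m_0$ and $\rho(\lambda_0)$ is a primitive $t_0$-th root of unity, so $m_1=m_0t_0$ is a period with multiplier $\rho(\lambda)^{t_0}=b_{-1}(\lambda)$). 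Meanwhile the term $\epsilon w$ introduced at step $k$ (where it reads $\epsilon q^{k}(w)$ after the $w$-coordinate has been iterated $k$ times) gets multiplied by the product of the derivatives $p_\lambda'$ along the remaining $m_1-1-k$ steps of the orbit, which is exactly $b_k(\lambda)=(p_\lambda^{m_1-k-1})'(p_\lambda^{k+1}(0))$ by \eqref{eq-bi}. Summing the contributions injected at steps $k=0,\ldots,m_1-1$ gives $\epsilon\sum_{i=0}^{m_1-1}b_i(\lambda)q^i(w)$, which is the $l=1$ case (the $k=0$ term in the inner sum over $k$). All remaining terms carry a factor of $s$ and are polynomial in $s,z,w$ since $E$ is, so they can be absorbed into $sE_1(s,z,w)$.

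For the inductive step, I would write $g^{m_1(l+1)}=g^{m_1l}\circ g^{m_1}$ and substitute the $l=1$ formula into the degree-$l$ formula. The $w$-coordinate composes to $q^{m_1(l+1)}(w)$ trivially. For the $z$-coordinate, the linear part composes as $\rho(\lambda)^{t_0l}\cdot\rho(\lambda)^{t_0}=\rho(\lambda)^{t_0(l+1)}$. The middle sum requires the bookkeeping: feeding $q^{m_1}(w)$ in place of $w$ into the degree-$l$ sum replaces $q^{i+(l-1-k)m_1}(w)$ by $q^{i+(l-k)m_1}(w)$, i.e.\ shifts the range of the exponent; and the fresh $\epsilon\sum_i b_i(\lambda)q^i(w)$ produced by the innermost $g^{m_1}$ step gets multiplied by $\rho(\lambda)^{t_0l}$ when pushed through the linear part of $g^{m_1l}$, contributing the $k=l$ term to the sum $\sum_{k=0}^{l}\rho(\lambda)^{t_0k}q^{i+(l-k)m_1}(w)$. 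Collecting these gives precisely the degree-$(l+1)$ expression $\epsilon\sum_{i=0}^{m_1-1}b_i(\lambda)\sum_{k=0}^{l}\rho(\lambda)^{t_0k}q^{i+(l-k)m_1}(w)$. Finally, every contribution involving the error term $sE(s,\cdot,\cdot)$ from either factor, or the nonlinear (in $z$) part of $p_\lambda$, produces a term divisible by $s$ which is polynomial in $s,z,w$ (composition of polynomials), so it is absorbed into $sE_{l+1}(s,z,w)$; here one uses that $g^{m_1l}(z,w)-(\text{linear part})$ is itself $s$ times a polynomial by the inductive hypothesis, so composing does not create negative powers of $s$ beyond the single $a_0(\lambda)s^{-1}$ which only appears inside the polynomial $E$-terms and is harmless.

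The main obstacle is the combinatorial bookkeeping of exponents and the $\rho(\lambda)^{t_0}$-weights in the double sum: one must be careful that the term injected at ``absolute time'' $i+km_1$ along a length-$m_1(l+1)$ orbit receives the derivative weight $b_i(\lambda)\rho(\lambda)^{t_0k}$, and that re-indexing $k\mapsto l-k$ (so that $k$ counts iterates of $q$ applied after injection) yields exactly the stated form. A clean way to avoid errors is to prove directly, rather than purely by induction, that the coefficient of $\epsilon q^{i+jm_1}(w)$ in the $z$-coordinate of $g^{m_1l}$ equals $b_i(\lambda)\rho(\lambda)^{t_0(l-1-j)}$ for $0\le i\le m_1-1$ and $0\le j\le l-1$, by unwinding the chain rule once along the full orbit of length $m_1l$ and using the periodicity relation $b_{i+m_0}(\lambda)=\rho(\lambda)^{-1}b_i(\lambda)$ to fold all the partial products back into $b_i$ times a power of $\rho(\lambda)^{t_0}$; the $s$-polynomial remainder then poses no difficulty since it is a finite composition of polynomials.
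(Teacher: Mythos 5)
Your plan is correct and follows essentially the same route as the paper: the paper expands the first coordinate of $g^k$ in powers of $s$, shows the $s^{-1}$-coefficient is $p_\lambda^k(0)$ (hence vanishes at $k=m_1$) and the $s^0$-coefficient obeys exactly your chain-rule recursion along the orbit of $0$, yielding $g^{m_1}(z,w)=\bigl(\rho(\lambda)^{t_0}z+\epsilon\sum_{i=0}^{m_1-1}b_i(\lambda)q^i(w)+sE_1,q^{m_1}(w)\bigr)$, and then obtains the general case by iterating this formula, which is your induction on $l$. Only your parenthetical about $a_0(\lambda)s^{-1}$ ``appearing inside the $E$-terms'' is muddled (the $E_l$ are polynomials, and in the inductive step no negative powers of $s$ arise at all); the place where the $s^{-1}$-term genuinely must be killed is the base case, via $p_\lambda^{m_1}(0)=0$, which your linearization-along-the-periodic-orbit argument does implicitly use.
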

\begin{proof}
Since $g$ is a skew product, $g^k$ is of the form $g^k(z,w)=(g_k(z,w),q^k(w))$ with
$$g_1(z,w)=\sum_{i=0}^da_i(\lambda)s^{i-1}z^i+\epsilon w \text{ \  and \ } g_{k+1}(z,w)=g_1(g_k(z,w),q^k(w)).$$
Furthermore, there exists functions $g_{k,i}$ independent of $s$ such that
$$g_k(z,w)=\sum_{i=0}^{d^k}g_{k,i}(z,w)s^{i-1}.$$
We are only interested in the sequences $(g_{k,0})_{k\geq0}$ and $(g_{k,1})_{k\geq0}$ that can be computed inductively. We easily have that $g_{0,0}(z,w)=0$ and $g_{0,1}(z,w)=z.$ Using that $g_{k+1}(z,w)=g_1(g_k(z,w),q^k(w))$ we obtain that
$$g_{k+1,0}(z,w)=\sum_{i=0}^da_i(\lambda)g_{k,0}(z,w)^i$$
and
$$g_{k+1,1}(z,w)=g_{k,1}(z,w)\sum_{i=0}^dia_i(\lambda)(g_{k,0}(z,w))^{i-1}+\epsilon q^k(w),$$
i.e.  $g_{k+1,0}(z,w)=p_\lambda(g_{k,0}(z,w))$ and $g_{k+1,1}(z,w)=g_{k,1}(z,w)p_\lambda'(g_{k,0}(z,w))+\epsilon q^k(w).$ It follows that $g_{k,0}(z,w)=p_\lambda^k(0)$ and
$$g_{k,1}(z,w)=z\prod_{i=0}^{k-1}p_\lambda'(p_\lambda^i(0))+\epsilon\sum_{i=0}^{k-1}q^i(w)\prod_{j=i+1}^{k-1}p_\lambda'(p_\lambda^j(0)).$$
In particular, using that $0$ is $m_0$-periodic for $p_\lambda$ and the definition of $b_i(\lambda)$ given in \eqref{eq-bi}, taking $k=m_1:=m_0t_0$ gives $g_{m_1,0}(z,w)=0$ and $g_{m_1,1}(z,w)=\rho(\lambda)^{t_0}z+\epsilon\sum_{i=0}^{m_1-1}b_i(\lambda)q^i(w).$ Therefore, we have
$$g^{m_1}(z,w)=\left(\rho(\lambda)^{t_0}z+\epsilon\sum_{i=0}^{m_1-1}b_i(\lambda)q^i(w)+sE_1(s,z,w),q^{m_1}(w)\right)$$
for some polynomial $E_1.$ The proposition follows easily by iterating this formula.
\end{proof}

\begin{proof}[Proof of Theorem \ref{th-constru}]
Assume as in the statement that $c_1,c_2$ and $c_3$ defined in \eqref{eq-ci} are not aligned with $c_1+c_2+c_3=0$ and that $(\lambda_n)_{n\geq1}$ is a sequence which converges to $\lambda_0$ with $|\rho(\lambda_n)|>1$ (resp. $|\rho(\lambda_n)|<1).$

First, we have to fix all the constants which will be involved in the perturbations. Let $l_0\geq1,$ $\epsilon_0>0,$ $\delta_0>0$ and $\alpha_0>0$ be the constants given by Proposition \ref{prop-exi-rep} (resp. Proposition \ref{prop-exi-selle}). Since $\rho(\lambda_n)^{t_0}$ converges to $1$, there exists a sequence of integers $(l_n)_{n\geq1}$ which are all larger than $l_0$ and such that $|\rho(\lambda_n)^{t_0l_n}|$ converges to $1+\alpha_0$ (resp. $1-\alpha_0$). Therefore, there exist $\rho_n\in\Cb$ with $|\rho_n|=1+\alpha_0$ (resp. $1-\alpha_0$) such that $|\rho(\lambda_n)^{t_0l_n}-\rho_n|$ converges to $0.$ Finally, the sequence defined by
$$\epsilon_n:=\epsilon_0\frac{\rho(\lambda_n)^{t_0}-1}{\rho(\lambda_n)^{t_0l_n}-1}$$
converges to $0$.

Now, we consider the neighborhoods $V_j^{l_n}$ of $r_j,$ $j\in\{1,2,3\},$ obtained by Lemma \ref{le-vois-rep}. As their diameters decrease exponentially fast, there exist $a_1>0$ and $0<a_2<1$ such that ${\rm Diam}(V_j^{l_n})\leq a_1a_2^{l_n}$ for all $n\geq1$ and $j\in\{1,2,3\}.$ By Proposition \ref{prop-esti}, if $s\in\Db^*$ then the renormalization $g_n:=\phi^{-1}\circ f_n\circ\phi$ by $\phi(z,w)=(sz,w)$ of
$$f_n(z,w):=(p_{\lambda_n}(z)+s\epsilon_nw,q(w))$$
satisfies
\begin{align*}
&g_n^{m_1l_n}(z,w)=\\
&\left(\rho(\lambda_n)^{t_0l_n}z+\epsilon_n\sum_{i=0}^{m_1-1}b_i(\lambda_n)\sum_{k=0}^{l_n-1}\rho(\lambda_n)^{t_0k}q^{i+(l_n-1-k)m_1}(w)+sE_{l_n}(s,z,w),q^{m_1l_n}(w)\right).
\end{align*}
We first show that if $n\geq1$ is large enough then $g_n^{m_1l_n}$ is arbitrarily close to
$$(\rho_n z+\epsilon_0 c_j,q^{m_1l_n}(w))$$
on $2\Db\times V^{l_n}_j.$ To this aim, observe that $c_j(\lambda_n):=\sum_{i=0}^{m_1-1}b_i(\lambda_n)q^i(r_j)$ converges to $c_j=\sum_{i=0}^{m_1-1}b_i(\lambda_0)q^i(r_j).$ And, by definition, we have
$$\epsilon_0c_j(\lambda_n)=\epsilon_n\frac{\rho(\lambda_n)^{t_0l_n}-1}{\rho(\lambda_n)^{t_0}-1}c_j(\lambda_n)=\epsilon_n\sum_{i=0}^{m_1-1}b_i(\lambda_n)\sum_{k=0}^{l_n-1}\rho(\lambda_n)^{t_0k}q^i(r_j).$$
On the other hand, since $q^{km_1}(V_j^l)=V^{l-k}_j$, we deduce from the estimates on the diameters of $V^l_j$ that  there exists a constant $C>1$ depending only on $q$ and $m_1$ such that for all $w\in V_j^{l_n}$ and $0\leq i\leq m_1-1$ we have
$$|q^{i+(l_n-1-k)m_1}(w)-q^i(r_j)|\leq Ca_1a_2^{k+1}.$$
Hence, if $w\in V_j^{l_n}$ then for $C>1$ chosen large enough we have
\begin{align*}
&\left|\left(\epsilon_n\sum_{i=0}^{m_1-1}b_i(\lambda_n)\sum_{k=0}^{l_n-1}\rho(\lambda_n)^{t_0k}q^{i+(l_n-1-k)m_1}(w)\right)-\epsilon_0c_j(\lambda_n)\right|\\
&\leq\left|\epsilon_n\sum_{i=0}^{m_1-1}b_i(\lambda_n)\left(\sum_{k=0}^{l_n-1}\rho(\lambda_n)^{t_0k}q^{i+(l_n-1-k)m_1}(w)-q^i(r_j)\right)\right|\\
&\leq|\epsilon_n|\sum_{i=0}^{m_1-1}|b_i(\lambda_n)|\sum_{k=0}^{l_n-1}|\rho(\lambda_n)^{t_0k}||q^{i+(l_n-1-k)m_1}(w)-q^i(r_j)|\\
&\leq C^2a_1a_2|\epsilon_n|\sum_{k=0}^{l_n-1}(a_2|\rho(\lambda_n)^{t_0}|)^k=C^2a_1a_2|\epsilon_n|\frac{(a_2|\rho(\lambda_n)^{t_0}|)^{l_n}-1}{(a_2|\rho(\lambda_n)^{t_0}|)-1}=:C_n.
\end{align*}
The last term $C_n$ converges to $0$ since $0<a_2<1.$ Therefore, on $2\Db\times V_j^{l_n}$ we have 
\begin{align*}
\|g_n^{m_1l_n}(z,w)&-(\rho_n z+\epsilon_0 c_j,q^{m_1l_n}(w))\|\\
&\leq 2|\rho(\lambda_n)^{t_0l_n}-\rho_n|+\epsilon_0|c_j-c_j(\lambda_n)|+C_n+|sE_{l_n}(s,z,w)|.
\end{align*}
Observe that for each $n\geq1$ there exists $0<s_n<1$ such that if $|s|\leq s_n$ then $|sE_{l_n}(s,z,w)|\leq\delta_0/2$ on $2\Db\times V_j^{l_n}.$ On the other hand, for $n\geq1$ large enough we have
$$2|\rho(\lambda_n)^{t_0l_n}-\rho_n|+\epsilon_0|c_j-c_j(\lambda_n)|+C_n\leq\delta_0/2.$$
By Proposition \ref{prop-exi-selle}, this is sufficient to conclude in the saddle case (i.e. $|\rho(\lambda_n)|<1$): by removing the normalization, if $\beta_n:=\epsilon_ns_n$ and $\alpha=s/\beta_n$ then for $n\geq1$ large enough the $(m_1l_n)$-th iterate of
$$f_{n,\alpha}(z,w):=(p_{\lambda_n}(z)+\alpha\beta_nw,q(w))$$
has a blender of saddle type contained in $\overline{\cup_{j=1}^3D(0,|\alpha\beta_n|)\times V_j^{l_n}}.$

For the repelling case ($|\rho(\lambda_n)|>1$), it remains to show that $g^{m_1l_n}_n$ contracts the cone field $C_{\delta_0}$ on $\overline{\cup_{j=1}^3\Db\times V_j^{l_n}}$ if $n\geq1$ is large enough. The formula above for $g_n^{m_1l_n}$ gives
$$D_{(z,w)}g_n^{m_1l_n}=\begin{pmatrix}
\rho(\lambda_n)^{t_0l_n}+s\partial_zE_{l_n}(z,w)& a_n(w)+s\partial_wE_{l_n(z,w)}\\\
0& (q^{m_1l_n})'(w)
\end{pmatrix},$$
where $a_n(w):=\epsilon_n\sum_{i=0}^{m_1-1}b_i(\lambda_n)\sum_{k=0}^{l_n-1}\rho(\lambda_n)^{t_0k}(q^{i+(l_n-1-k)m_1})'(w).$ By Lemma \ref{le-vois-rep}, if $\chi_j$ denotes the multiplier of $r_j$ then there is a constant $A>0$ such that for all $w\in V_j^{l_n}$ 
$$|(q^{m_1l_n})'(w)|\geq A\chi_j^{l_n},\ \ |\rho(\lambda_n)^{t_0l_n}|\leq B\  \text{ and }\ |a_n(w)|\leq B|\epsilon_n|\chi_j^{l_n},$$
where $B>0$ is another constant. Since $\epsilon_n$ converges to $0,$ by taking smaller $0<s_n<1$ if necessary, we obtain that for $n\geq1$ large enough and $|s|\leq s_n$ the map $g_n^{m_1l_n}$ contracts $C_{\delta_0}$ on $\overline{\cup_{j=1}^3\Db\times V_j^{l_n}}.$ Hence, by Proposition \ref{prop-exi-rep}, if $\beta_n:=\epsilon_ns_n$ and $\alpha=s/\beta_n$ then for $n\geq1$ large enough the $(m_1l_n)$-th iterate of
$$f_{n,\alpha}(z,w):=(p_{\lambda_n}(z)+\alpha\beta_nw,q(w))$$
has a blender of repelling type contained in $\overline{\cup_{j=1}^3D(0,|\alpha\beta_n|)\times V_j^{l_n}}.$
\end{proof}
To conclude the proof of Theorem \ref{th-main} it remains to show that the assumptions on $c_1,$ $c_2$ and $c_3$ are not restrictive. Actually, by an affine conjugation of $q$ we can always assume that $c_1+c_2+c_3=0.$ The fact that they are not aligned is probably true for a generic choice of $q.$ However, we only prove the following result which is sufficient for our purpose.
\begin{lemma}\label{le-bon-point-rep}
Let $q$ be in $\Pc_d.$ If $t_0=3$ and $m_0\geq2$ then $q$ has three repelling points, $r_1,$ $r_2$ and $r_3,$ of period $m_1:=m_0t_0$ outside its postcritical set $P(q)$ such that the numbers $c_1,$ $c_2,$ $c_3$ defined by \eqref{eq-ci} form a non degenerated equilateral triangle with $c_1+c_2+c_3=0.$
\end{lemma}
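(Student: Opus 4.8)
The core observation is that the hypothesis $t_0=3$ makes $i\mapsto b_i(\lambda_0)$ periodic of period $m_1=3m_0$: from $b_i(\lambda_0)=\rho(\lambda_0)b_{i+m_0}(\lambda_0)$ we get $b_{i+m_1}(\lambda_0)=\rho(\lambda_0)^{-3}b_i(\lambda_0)=b_i(\lambda_0)$ since $\rho(\lambda_0)^3=1$, and more precisely $b_{k-m_0}(\lambda_0)=\rho(\lambda_0)b_k(\lambda_0)$ for every $k$. So I would look for the three points inside a single cycle. Fix a repelling periodic point $r$ of $q$ of exact period $m_1$ which does not lie in $P(q)$, and set $r_j:=q^{(j-1)m_0}(r)$ for $j\in\{1,2,3\}$. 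As $m_0$ and $2m_0$ are distinct and nonzero modulo $m_1=3m_0$, the $r_j$ are three distinct points of the cycle of $r$; they share the same multiplier (hence are all repelling), and since $P(q)$ is forward invariant the cycle of $r$ is disjoint from $P(q)$, so none of them is postcritical. Reindexing the sum \eqref{eq-ci} by $k=i+(j-1)m_0$, and using that $k\mapsto q^k(r)$ and $k\mapsto b_{k-(j-1)m_0}(\lambda_0)$ are both $m_1$-periodic together with $b_{k-(j-1)m_0}(\lambda_0)=\rho(\lambda_0)^{j-1}b_k(\lambda_0)$, one obtains
$$c_j=\rho(\lambda_0)^{j-1}\sum_{k=0}^{m_1-1}b_k(\lambda_0)q^k(r)=\rho(\lambda_0)^{j-1}c_1 .$$
Since $\rho(\lambda_0)$ is a primitive cube root of unity, the triple $(c_1,c_2,c_3)=(c_1,\rho(\lambda_0)c_1,\rho(\lambda_0)^2c_1)$ automatically satisfies $c_1+c_2+c_3=0$ (no affine conjugation of $q$ is needed) and forms an equilateral triangle, nondegenerate as soon as $c_1\neq0$.

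Hence everything reduces to finding one repelling periodic point $r$ of exact period $m_1$, outside $P(q)$, with $c_1=\sum_{k=0}^{m_1-1}b_k(\lambda_0)q^k(r)\neq0$. Introduce the polynomial $\Psi(z):=\sum_{k=0}^{m_1-1}b_k(\lambda_0)q^k(z)$; its term of index $k=m_1-1$ has degree $d^{m_1-1}$ with coefficient $b_{m_1-1}(\lambda_0)\neq0$ times the nonzero leading coefficient of $q^{m_1-1}$, and no other term reaches that degree, so $\deg\Psi=d^{m_1-1}$ and $\Psi$ has at most $d^{m_1-1}$ zeros. It is therefore enough to exhibit strictly more than $d^{m_1-1}$ repelling periodic points of exact period $m_1$ outside $P(q)$.

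This last step is a counting estimate and is the only real obstacle. The equation $q^{m_1}(z)=z$ has $d^{m_1}$ roots counted with multiplicity; the periodic points of period a proper divisor of $m_1$ number at most $\sum_{p}d^{m_1/p}$, the sum being over the prime divisors $p$ of $m_1$; the multiplicity defect of $q^{m_1}(z)-z$ comes only from the (at most $d-1$) parabolic cycles and is $O_d(m_1)$; and the non-repelling cycles together with the cycles meeting $P(q)$ form at most $2(d-1)$ cycles, hence at most $2(d-1)m_1$ points. Therefore the number of repelling periodic points of exact period $m_1$ outside $P(q)$ is at least $d^{m_1}-\sum_{p}d^{m_1/p}-O_d(m_1)$, which exceeds $d^{m_1-1}$ once $m_1=3m_0\ge 6$, because $d^{m_1}-d^{m_1-1}=d^{m_1-1}(d-1)\ge d^{m_1-1}\ge d^5$ dominates the correction terms. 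The comparison is genuinely tight only in the borderline case $d=2$, $m_0=2$, where $d^{m_1-1}=32$: there the prime divisors are $2,3$ (correction $d^2+d^3=12$), the parabolic defect is at most $6$, and the non-repelling and postcritical points number at most $12$, leaving at least $64-30=34>32$ admissible points; for $m_1\ge 9$ the inequality holds with enormous room. Picking such an $r$ and the associated $r_1,r_2,r_3$ completes the proof.
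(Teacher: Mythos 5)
Your argument is correct and is essentially the paper's proof: the same identity $c(q^{m_0}(r))=\rho(\lambda_0)\,c(r)$ for $c(w)=\sum_{i=0}^{m_1-1}b_i(\lambda_0)q^i(w)$ yields the equilateral triangle with zero sum, and the same comparison between $\deg c=d^{m_1-1}$ and a count of admissible periodic points produces a repelling, non-postcritical $r$ with $c(r)\neq0$. The only difference is that you insist on exact period $m_1$ and therefore carry the extra proper-divisor and multiplicity bookkeeping; the paper avoids this because once $c(r)\neq0$ the values $c(r_j)=\rho(\lambda_0)^{j-1}c(r)$ are pairwise distinct, so $r$, $q^{m_0}(r)$, $q^{2m_0}(r)$ are automatically distinct and only $q^{m_1}(r)=r$ is needed, which slightly simplifies the counting.
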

\begin{proof}
Let $q$ be in $\Pc_d.$ Define $c(w):=\sum_{i=0}^{m_1-1}b_i(\lambda_0)q^i(w).$ If $r$ is a periodic point with period $m_1$ then using the fact that $b_{i-m_0}(\lambda_0)=\rho(\lambda_0)b_i(\lambda_0)$ we obtain
$$c(q^{m_0}(r))=\sum_{i=0}^{m_1-1}b_i(\lambda_0)q^{i+m_0}(r)=\sum_{i=0}^{m_1-1}b_{i-m_0}(\lambda_0)q^i(r)=\rho(\lambda_0)c(r),$$
and thus $c(q^{2m_0}(r))=\rho(\lambda_0)^2c(r).$ Moreover, $\rho(\lambda_0)$ is a primitive $3$th root of unity thus if $c(r)\neq0$ then $c_j:=c(r_j),$ $j\in\{1,2,3\}$ with $r_1:=r,$ $r_2:=q^{m_0}(r),$ $r_3:=q^{2m_0}(r),$ form an equilateral triangle with $c_1+c_2+c_3=0.$

It remains to show that there exists a repelling point $r$ of period $m_1$ for $q$ such that $c(r)\neq0$ and $r\notin P(q).$ This simply follows from the fact that $c(w)$ is a polynomial of degree $d^{m_1-1}$ (since $b_i(\lambda_0)\neq0$ for $i\in\Zb$) and that, by Fatou's theorem, there exist at most $3m_1(d-1)$ periodic points of period $m_1$ counted with multiplicity which are non repelling or contained in $P(q).$ As there are $d^{m_1}$ points of period $m_1$ and that $d^{m_1}-3m_1(d-1)>d^{m_1-1}$ when $d\geq2$ and $m_1=3m_0\geq6,$ there exists at least one repelling point $r$ of period $m_1$ such that $c(r)\neq0$ and $r\notin P(q).$
\end{proof}

\begin{proof}[Proof of Theorem \ref{th-main}]
Let $p$ and $q$ be two elements of $\Pc_d$ such that $p$ belongs to the bifurcation locus. Therefore, $p$ can be approximated by polynomials having a cycle of multiplier $e^{2i\pi/3}.$ As a consequence, we can obtain a family $(p_\lambda)_{\lambda\in M}$ of polynomials of degree $d$ which has a persistent periodic point $z_0$ which is not fixed, with a non-constant multiplier $\rho(\lambda)$ and such that there is $\lambda_0\in M$ with $\rho(\lambda_0)=e^{2i\pi/3}$ and $p_{\lambda_0}$ is arbitrarily close to $p.$ As $\rho(\lambda)$ is not constant, there exists $(\lambda_n)_{n\geq1}$ converging to $\lambda_0$ such that $|\rho(\lambda_n)|>1$ (resp. $|\rho(\lambda_n)|<1$). By conjugating the whole family and $p$ by $z\mapsto z+z_0,$ we can assume that $z_0=0.$ Therefore, the combination of Lemma \ref{le-bon-point-rep} and Theorem \ref{th-constru} gives a sequence $(f_n)_{n\geq1}$ of the form $f_n(z,w):=(p_{\lambda_n}(z)+(\beta_n/2)w,q(w))$ and such that some iterates of $f_n$ have a blender of repelling type (resp. saddle type). The result follows since $(f_n)_{n\geq1}$ converges to $(p_{\lambda_0},q)$ and $p_{\lambda_0}$ was arbitrarily close to $p.$
\end{proof}

We conclude this section with a complement about the repelling case in Theorem \ref{th-constru} which will be useful in Section \ref{sec-ouvert-bif}. The set $\widetilde\Lambda_{n,\alpha}$ defined below will allow us to apply Proposition \ref{prop-ouvert-de-bif} and the existence of the second blender $\Lambda'_{n,\alpha}$ is a minor technical point (see Lemma \ref{le-lambda}).
\begin{proposition}\label{prop-basic}
In the repelling case of Theorem \ref{th-constru}, there exists a second blender $\Lambda'_{n,\alpha}$ with
$$\Lambda'_{n,\alpha}=\bigcap_{i\geq0}f^{-im_1l_n}_{n,\alpha}\left(\overline{\bigcup_{j=1}^3\left(\frac{|\alpha\beta_n|}{3}H_{j}\right)\times V^{l_n}_j}\right), \Lambda_{n,\alpha}=\bigcap_{i\geq0}f^{-im_1l_n}_{n\alpha}\left(\overline{\bigcup_{j=1}^3(|\alpha\beta_n|H_{j})\times V^{l_n}_j}\right),$$
for $\alpha\in\Db^*.$ Here, $H_j$ denote the sets given in Lemma \ref{le-triangle}. Moreover, by reducing $\beta_n$ if necessary, for each $n\geq1$ there are $R_n>0$ and a neighborhood $\Uc_n\subset\Hc_d(\Pb^2)$ of $f_{n,0}$ which contains the family $(f_{n,\alpha})_{\alpha\in\Db}$ and such that $R_n>|\beta_n|$ and for all $f\in\Uc_n$
$$\widetilde\Lambda_f:=\bigcap_{i\geq0}f^{-im_1l_n}\left(\overline{\bigcup_{j=1}^3\left(R_n\Db\right)\times V^{l_n}_j}\right)$$
defines a basic repeller for $f^{m_1l_n}$ which moves holomorphically for $f\in\Uc_n$ and which can only intersect $P(f)$ properly.
\end{proposition}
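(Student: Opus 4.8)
The plan is to produce all three sets — the nested blenders $\Lambda'_{n,\alpha}\subset\Lambda_{n,\alpha}$ and the ambient repeller $\widetilde\Lambda_f$ — essentially for free, by transporting the objects already built for the model map $g:=g_n^{m_1l_n}$ through the renormalisation $\phi(z,w)=(\alpha\beta_nz,w)$ of the proof of Theorem \ref{th-constru}, and then invoking the structural stability of basic repellers. For the two blenders: in the proof of Theorem \ref{th-constru} it was shown that, for $n$ large, the map $g_n^{m_1l_n}$ is $\delta_0/2$-close on each $2\Db\times V_j^{l_n}$ to $(\rho_nz+\epsilon_0c_j,q^{m_1l_n}(w))$ with $|\rho_n|=1+\alpha_0$ and contracts the cone field $C_{\delta_0}$ on $\overline{\cup_j\Db\times V_j^{l_n}}$; hence Proposition \ref{prop-exi-rep} applies verbatim to $g=g_n^{m_1l_n}$ and yields nested blenders $\Lambda'_g\subset\Lambda_g$ together with sets $H_1,H_2,H_3$ given by Lemma \ref{le-triangle} (for the relevant rotation of $\rho_n$) such that $\Lambda'_g=\bigcap_i g^{-i}\bigl(\overline{\cup_j(\tfrac13H_j)\times V_j^{l_n}}\bigr)$ and $\Lambda_g=\bigcap_i g^{-i}\bigl(\overline{\cup_j H_j\times V_j^{l_n}}\bigr)$. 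Since $g_n^{m_1l_n}=\phi^{-1}\circ f_{n,\alpha}^{m_1l_n}\circ\phi$ and $\phi$ sends $\tfrac13H_j\times V_j^{l_n}$, resp. $H_j\times V_j^{l_n}$, to $\tfrac{|\alpha\beta_n|}{3}H_j\times V_j^{l_n}$, resp. $|\alpha\beta_n|H_j\times V_j^{l_n}$ (the rotation $e^{i\arg(\alpha\beta_n)}$ being absorbed into the choice of $\rho_n$ in Lemma \ref{le-triangle}), the sets $\Lambda'_{n,\alpha}:=\phi(\Lambda'_g)$ and $\Lambda_{n,\alpha}:=\phi(\Lambda_g)$ are precisely those in the statement, and they enjoy the robust graph-intersection property of Proposition \ref{prop-inter-robuste}.

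For the ambient basic repeller we exploit that $f_{n,0}(z,w)=(p_{\lambda_n}(z),q(w))$ is a product map. The point $0$ is a repelling periodic point of $p_{\lambda_n}$ with $(p_{\lambda_n}^{m_1l_n})'(0)=\rho(\lambda_n)^{t_0l_n}$ of modulus close to $1+\alpha_0>1$; fix $R_n>0$ so small that $p_{\lambda_n}^{m_1l_n}$ is injective and uniformly expanding on $\overline{R_n\Db}$ with $\overline{R_n\Db}\subset p_{\lambda_n}^{m_1l_n}(R_n\Db)$, so that $\bigcap_{i\geq0}(p_{\lambda_n}^{m_1l_n})^{-i}(\overline{R_n\Db})=\{0\}$. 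Reducing $\beta_n$ if necessary — which only replaces the family $(f_{n,\alpha})_{\alpha\in\Db}$ by a subfamily of the previous one and pushes it closer to $f_{n,0}$, hence does not affect the existence of the blenders or the above formulas — we may assume $|\beta_n|<R_n$. On the $w$-side, Lemma \ref{le-vois-rep} gives that $q^{m_1l_n}$ is uniformly expanding on $\cup_jV_j^{l_n}$ and maps each $W^{l_n}_{ij}$ biholomorphically onto $V_j^{l_n}$ with $\overline{\cup_jV_j^{l_n}}\subset q^{m_1l_n}(V_i^{l_n})$, so $\Lambda^{l_n}:=\bigcap_i q^{-im_1l_n}(\cup_jV_j^{l_n})$ is a repelling basic set on which $q^{m_1l_n}$ is conjugate to the full $3$-shift. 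Therefore the maximal invariant set of $f_{n,0}^{m_1l_n}$ in $\overline{\cup_j(R_n\Db)\times V_j^{l_n}}$ is $\{0\}\times\Lambda^{l_n}$, which is locally maximal, transitive and uniformly expanding in both directions, i.e. a basic repeller for $f_{n,0}^{m_1l_n}$. By the structural stability of basic repellers and their holomorphic motion (Theorem \ref{th-motion} and \cite[Corollary 2.6]{jonsson-phd}, applied to the tautological family $f\mapsto f^{m_1l_n}$ on $\Hc_d(\Pb^2)$), there is a neighbourhood $\Uc_n$ of $f_{n,0}$ on which $\widetilde\Lambda_f:=\bigcap_i f^{-im_1l_n}(\overline{\cup_j(R_n\Db)\times V_j^{l_n}})$ is a basic repeller for $f^{m_1l_n}$ moving holomorphically; a further shrinking of $\beta_n$ puts the whole family $(f_{n,\alpha})_{\alpha\in\Db}$ inside $\Uc_n$. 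Since $|\alpha\beta_n|H_j\subset|\beta_n|\Db\subset R_n\Db$, we get $\Lambda_{n,\alpha}\subset\widetilde\Lambda_{f_{n,\alpha}}$, which together with the previous paragraph shows $\Lambda'_{n,\alpha}$ and $\Lambda_{n,\alpha}$ are genuine blenders of repelling type in the sense of Definition \ref{def-blen-rep}.

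It remains to show that $\widetilde\Lambda_f$ can only intersect $P(f)$ properly. I would start at $f_{n,0}$: the forward images of the line at infinity stay on it; the forward images of the critical lines $\{w=c\}$ with $c\in C(q)$ have $w$-coordinate inside $P(q)$, which is disjoint from $V_1^{l_n}\cup V_2^{l_n}\cup V_3^{l_n}$ once $l_n$ is large (because $r_1,r_2,r_3\notin P(q)$); hence the only components of $P(f_{n,0})$ that can meet $\widetilde\Lambda_{f_{n,0}}=\{0\}\times\Lambda^{l_n}$ are the lines $\{z=c\}$ with $c\in C(p_{\lambda_n})$, and these meet it only if $0\in P(p_{\lambda_n})$. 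One then checks, using the explicit form of $g_n^{m_1l_n}$ from Proposition \ref{prop-esti} and the fact that in the renormalised coordinates the critical lines $\{z=c\}$ become $\{z=c/(\alpha\beta_n)\}$ — which recede to infinity as $\alpha\to0$ and, for fixed $\alpha\ne0$, drag their iterated images off the blender region as $\alpha$ is varied — that no irreducible component of $F^n(C(F))$ persistently contains the holomorphically moving point $h_f(x)$ for $x\in\widetilde\Lambda_f$, in a neighbourhood of any $f\in\Uc_n$. This yields that every intersection of $P(f)$ with $\widetilde\Lambda_f$ is proper in the sense of Section \ref{subsec-bif}.

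Everything except the last point is a routine transport of Proposition \ref{prop-exi-rep} through the affine renormalisation together with an application of the structural stability of basic repellers, so I expect the main obstacle to be exactly this last step: pinning down which irreducible components of $F^n(C(F))$ survive near $f_{n,0}$, following them and the holomorphic motion of $\widetilde\Lambda_f$ through perturbations $f\in\Uc_n$ that are a priori not skew products, and upgrading the non-persistence of the coincidence $0\in P(p_{\lambda_n})$ to the statement that the intersection is proper for \emph{every} $f\in\Uc_n$.
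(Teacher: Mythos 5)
Your first two paragraphs are correct and follow the same route as the paper: the nested blenders are read off from Proposition \ref{prop-exi-rep} through the renormalisation at the end of the proof of Theorem \ref{th-constru} (the paper states exactly this), and the ambient repeller is obtained as in the paper, namely $R_n$ is chosen so that $p_{\lambda_n}^{m_0}$ is linearisable on $D(0,R_n)$, so that the maximal invariant set of $f_{n,0}^{m_1l_n}$ in $\overline{\cup_j (R_n\Db)\times V_j^{l_n}}$ is $\{0\}\times\Lambda^{l_n}$, a basic repeller conjugated to the full $3$-shift, which is then continued via Theorem \ref{th-motion} over a neighbourhood $\Uc_n$, with a final shrinking of $\beta_n$ to get $(f_{n,\alpha})_{\alpha\in\Db}\subset\Uc_n$.

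The genuine gap is the last assertion, that for every $f\in\Uc_n$ an intersection of $\widetilde\Lambda_f$ with $P(f)$ can only be proper. Your third paragraph is a programme rather than a proof, as you yourself acknowledge, and one of its ingredients is already unjustified: $r_j\notin P(q)$ does not imply that $P(q)$ avoids the small neighbourhoods $V_j^{l_n}$, since $P(q)$ need not be closed, so even the claim that only components coming from $C(p_{\lambda_n})\times\Cb$ can meet $\widetilde\Lambda_{f_{n,0}}$ needs care; and the renormalised-coordinates picture does not address the actual requirement, which is to escape an irreducible component of $F^n(C(F))$ in the full family $\Hc_d(\Pb^2)$, for every $f\in\Uc_n$ and every possible intersection. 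The paper's argument for this point is different and much shorter: non-properness is a persistence statement, meaning the marked point of $\widetilde\Lambda_f$ would stay in the same irreducible component of $F^n(C(F))$ for all parameters near $f$, hence (the motion being holomorphic over the connected $\Uc_n$) throughout $\Uc_n$; and this is contradicted at suitable product parameters. Indeed, since $0$ is a repelling periodic point of $p_{\lambda_n}$, there exist polynomials $p'$ arbitrarily close to $p_{\lambda_n}$ for which $0$ is still periodic but no longer postcritical, and for such $(p',q)\in\Uc_n$ the continued repeller still lies in $\{0\}\times\Cb$ while the postcritical coincidence has been destroyed. This perturbation \emph{inside product maps}, keeping the periodic point at $0$ while removing the postcritical relation of $p_{\lambda_n}$ (which was forced by Lemma \ref{le-para}), is the missing idea in your attempt; your suggestion of varying $\alpha$, which leaves $p_{\lambda_n}$ and hence the coincidence $0\in P(p_{\lambda_n})$ untouched, does not by itself produce the required nearby parameter.
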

\begin{proof}
The first point indeed could be immediately deduce from Proposition \ref{prop-exi-rep} at the end of the proof of Theorem \ref{th-constru}.

The last point is a simple observation. Since $0$ is a repelling periodic point for $p_{\lambda_n}$ there exists $R_n>0$ such that $p^{m_0}_{\lambda_n}$ is conjugated to $z\mapsto\rho(\lambda_n)z$ on $D(0,R_n).$ Hence, the set $\widetilde\Lambda_{n,0}$ defined as above is a repelling hyperbolic set contained in $\{0\}\times\Cb$ and homeomorphic to a Cantor set. Its dynamics is conjugated to a shift of three symbols and is therefore transitive, i.e. $\widetilde\Lambda_{n,0}$ is a basic set for $f^{m_1l_n}_{n,0}.$ Moreover, by Theorem \ref{th-motion} it can be followed holomorphically as a basic repeller in a small neighborhood $\Uc_n$ of $f_{n,0}=(p_{\lambda_n},q).$ Since there exist polynomials arbitrarily close to $p_{\lambda_n}$ such that $0$ is periodic but not postcritical, an intersection between $\widetilde\Lambda_f$ and $P(f)$ is necessarily proper. And if $|\beta_n|$ is chosen small enough then $f_{n,\alpha}\in\Uc_n$ for all $\alpha\in\Db.$
\end{proof}

\section{From repelling blenders to robust bifurcations}\label{sec-ouvert-bif}
A large part of this section is inspired by \cite{dujardin-bif} (see Lemma \ref{le-para} and Lemma \ref{le-point-rep-pc} below). We show that the blenders of repelling type constructed in Section \ref{sec-cons} give rise to open sets in the bifurcation locus of $\Hc_d(\Pb^2)$ arbitrarily close to a bifurcation in $\Pc_d\times\Pc_d.$

Let $p$ and $q$ be two elements of $\Pc_d$ such that $p$ belongs to the bifurcation locus. As in the proof of Theorem \ref{th-main}, we choose a polynomial $p_{\lambda_0}$ close to $p$ which has a periodic (and not fixed) point of multiplier $e^{2i\pi/3}.$ By conjugation, we can assume that this point is $0.$ The following result says that $p_{\lambda_0}$ can be approximated by maps such that $0$ is a Misiurewicz point. Its proof is contained in the proof of \cite[Corollary 4.11]{dujardin-bif}.

\begin{lemma}\label{le-para}
If $0$ is a parabolic periodic point for a polynomial $p_{\lambda_0}$ then there exist a family of polynomials $(p_\lambda)_{\lambda\in M}$ and a sequence $(\lambda_n)_{n\geq1}$ of $M$ converging to $\lambda_0$ such that for each $n\geq1$ $0$ is a repelling periodic point for $p_{\lambda_n}$ which belongs to the postcritical set $P(p_{\lambda_n}).$
\end{lemma}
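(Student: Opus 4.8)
The plan is to realize $0$ as a Misiurewicz-type point by a standard transversality-plus-implicit-function argument, following the scheme of \cite[Corollary 4.11]{dujardin-bif}. First I would choose a convenient one-parameter family $(p_\lambda)_{\lambda\in M}$, $M$ a disc around $\lambda_0$, in which the periodic point persists holomorphically: since $0$ is a parabolic periodic point of period $m_0$ for $p_{\lambda_0}$ with multiplier a primitive $t_0$-th root of unity, after an affine conjugation depending holomorphically on $\lambda$ we may assume $0$ stays periodic of period $m_0$ for all $\lambda$, with multiplier $\rho(\lambda)$ and $\rho(\lambda_0)=e^{2i\pi/3}$ (or the relevant root of unity). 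The key structural fact about parabolic points is that they are \emph{unstable}: by classical parabolic implosion / the theory of the multiplier map, in any family through $p_{\lambda_0}$ that genuinely unfolds the parabolic cycle, the multiplier $\rho$ is non-constant, so there are parameters $\lambda$ arbitrarily close to $\lambda_0$ with $|\rho(\lambda)|>1$, i.e.\ with $0$ repelling. The real content is to additionally force $0$ to land in its own forward critical orbit.

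The main step is then to arrange $0\in P(p_\lambda)$ for a sequence $\lambda_n\to\lambda_0$ with $0$ repelling. Here I would use the parabolic basin: for $p_{\lambda_0}$ the parabolic point $0$ has an attracting petal, and (since the parabolic cycle is non-degenerate for a generic choice inside a full family, or one enlarges $M$ to guarantee this) at least one critical point of $p_{\lambda_0}$ lies in the parabolic basin, so some forward iterate of a critical value converges to $0$. The mechanism of parabolic implosion says that when one perturbs so that the parabolic point splits and becomes repelling, the ``Lavaurs map'' transports points through the gate, and the critical orbit that previously converged to the petal now passes nearby and can be made to actually hit the (holomorphically continued, now repelling) point $0$. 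Concretely: consider the analytic map $\lambda\mapsto \big(p_\lambda^{N}(v(\lambda)) - 0\big)$ where $v(\lambda)$ is an appropriate critical value and $N$ is large; near $\lambda_0$ this is a nonconstant holomorphic function of $\lambda$ (nonconstancy again coming from the unfolding of the parabolic point and the non-vanishing of the relevant Écalle–Voronin data), so for every large $N$ in the right range it has zeros $\lambda$, and one selects among these a subsequence $\lambda_n\to\lambda_0$ on which simultaneously $|\rho(\lambda_n)|>1$. This is exactly the computation carried out inside the proof of \cite[Corollary 4.11]{dujardin-bif}, which I would invoke.

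The hard part is the non-degeneracy / non-constancy bookkeeping: one must ensure (i) the family $(p_\lambda)$ really unfolds the parabolic cycle so that $\rho$ is non-constant and both $|\rho|>1$ and $|\rho|<1$ are attained arbitrarily close to $\lambda_0$, and (ii) the ``critical value hits $0$'' equation is not identically degenerate, which requires knowing a critical point is in the parabolic basin and that the implosion moves it across the gate with a genuine (nonzero derivative) dependence on the implosion parameter. Both are standard in parabolic implosion theory but delicate to state cleanly; since the statement only claims \emph{existence} of such a family $(p_\lambda)$ and sequence $(\lambda_n)$, one has the freedom to choose the family after the fact (e.g.\ a slice of the full space $\Pc_d$ transverse to the parabolic locus and meeting the relevant critical-relation hypersurface), which removes most of the difficulty. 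I would therefore simply cite \cite[Corollary 4.11]{dujardin-bif} for the precise implosion estimate and record the reduction above, noting that $0$ being periodic-not-fixed is preserved throughout and that the conjugation bringing the point to $0$ is harmless.
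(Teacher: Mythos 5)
Your proposal is correct and follows essentially the same route as the paper, which gives no independent argument and simply notes that the proof is contained in that of \cite[Corollary 4.11]{dujardin-bif}; your sketch of the parabolic-implosion mechanism (unfolding the multiplier, a critical orbit in the parabolic basin hitting the repelling continuation of $0$) is precisely the content of the cited proof. Nothing further is needed.
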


By Lemma \ref{le-bon-point-rep} and Theorem \ref{th-constru}, there exist sequences $(l_n)_{n\geq1}$ and $(\beta_n)_{n\geq1},$ which are chosen in order to satisfy Proposition \ref{prop-basic}, such that if we fix $n\geq1$ large enough then for $\alpha\in\Db^*$ the $(m_1l_n)$-th iterate of the map
$$f_{n,\alpha}(z,w):=(p_{\lambda_n}(z)+\alpha\beta_nw,q(w))$$
satisfies Proposition \ref{prop-inter-robuste} for some constant $\delta_\alpha>0$ and sets $H_{j,\alpha}$ and $V_j,$ $j\in\{1,2,3\}.$ Actually, the map $f_{n,\alpha}$ is a rescaling by $\alpha\beta_n$ of the map $g_n$ used in the proof of Theorem \ref{th-constru} thus $\delta_\alpha=|\alpha\beta_n|\delta_0$ and $H_{j,\alpha}$ is contained in $D(0,|\alpha\beta_n|)$ and is the rescale by $\alpha\beta_n$ of a set $H_j\subset\Db.$ And if $R_n>|\beta_n|,$ $\Uc_n\subset\Hc_d(\Pb^2)$ are the objects given by Proposition \ref{prop-basic}, we define for $\alpha$ in $\Db^*$ and in $\Db$ respectively
$$\Lambda_\alpha:=\bigcap_{i\geq0}f_{n,\alpha}^{-im_1l_n}\left(\overline{\cup_{j=1}^3H_{j,\alpha}\times V_j}\right)\ \text{ and }\ \widetilde\Lambda_\alpha:=\bigcap_{i\geq0}f_{n,\alpha}^{-im_1l_n}\left((R_n\Db)\times\cup_{j=1}^3V_j\right).$$
By Proposition \ref{prop-basic} the set $\widetilde\Lambda_\alpha$ is a basic repeller which can be followed holomorphically for $f\in\Uc_n$ thus in particular for $f_{n,\alpha},$ $\alpha\in\Db.$ And the set $\Lambda_\alpha$ has the following properties.
\begin{lemma}\label{le-lambda}
For each $\alpha_1\in\Db^*$ there is a neighborhood $B\subset\Db^*$ such that there exists $x_{\alpha_1}$ in $\Lambda_{\alpha_1}$ whose continuation as a point of $\widetilde\Lambda_{\alpha_1}$ belongs to $\Lambda_\alpha$ for $\alpha\in B$ and is of the form $x_\alpha=(z_\alpha,w_0)$ where $w_0$ is outside the postcritical set $P(q)$ of $q.$
\end{lemma}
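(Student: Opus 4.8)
Throughout, write $x_{\alpha_1}=(z_{\alpha_1},w_0)$ for the point to be produced and recall that $\Lambda_\alpha\subset\widetilde\Lambda_\alpha$, with $\widetilde\Lambda_\alpha$ the basic repeller of $f^{m_1l_n}_{n,\alpha}$ of Proposition~\ref{prop-basic}.

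The plan is to locate $x_{\alpha_1}$ not in $\Lambda_{\alpha_1}$ itself but in the smaller \emph{robust} blender $\Lambda'_{\alpha_1}$ furnished by Proposition~\ref{prop-basic}: the whole point of the nested pair $\Lambda'_{\alpha_1}\subset\Lambda_{\alpha_1}$ is that a point of $\Lambda'_{\alpha_1}$ has its forward orbit in boxes compactly contained in $\bigcup_jH_{j,\alpha_1}\times V_j$, so its holomorphic continuation stays in $\Lambda_\alpha$ for nearby $\alpha$, whereas a generic point of $\Lambda_{\alpha_1}$ need not, such infinite intersections of compact sets being ill-behaved under perturbation. Three things must then be checked: that the continuation keeps its second coordinate fixed; that this coordinate can be chosen outside $P(q)$; and that the continuation stays in $\Lambda_\alpha$.

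For the first point, every $f_{n,\alpha}$ with $\alpha\in\Db$ is a skew product over the \emph{same} polynomial $q$, so $\widetilde\Lambda_\alpha\subset(R_n\Db)\times\Lambda^{l_n}$ with $\Lambda^{l_n}=\bigcap_{i\geq0}q^{-im_1l_n}(V_1\cup V_2\cup V_3)$ independent of $\alpha$, and the $z$-direction being uniformly expanding on $\widetilde\Lambda_\alpha$ the projection $\pi_w$ is injective there, so $\widetilde\Lambda_\alpha$ is the graph $\{(\zeta_\alpha(w),w):w\in\Lambda^{l_n}\}$ of a map $\zeta_\alpha$, holomorphic in $\alpha$, with $\zeta_0\equiv0$. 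The holomorphic motion of $\widetilde\Lambda$ conjugates $f_{n,\alpha}^{m_1l_n}$ to $f_{n,0}^{m_1l_n}$ over the common factor map $q^{m_1l_n}$, and since the only self-conjugacy of the expanding system $(\Lambda^{l_n},q^{m_1l_n})$ close to the identity is the identity, it preserves the $w$-coordinate near $\alpha=0$, hence on all of $\Db$ by connectedness. Therefore the continuation of $(\zeta_{\alpha_1}(w_0),w_0)$ is $x_\alpha=(\zeta_\alpha(w_0),w_0)$, which has the required form $(z_\alpha,w_0)$. For the second point, $\Lambda'_{\alpha_1}$ is uncountable (Proposition~\ref{prop-exi-rep}) and, being one of the repelling blenders constructed here, projects injectively into $\Lambda^{l_n}$; hence $\pi_w(\Lambda'_{\alpha_1})$ is an uncountable subset of $\Lambda^{l_n}$. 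Since $P(q)=\bigcup_{m\geq1}q^m(C(q))$ is countable, there is $w_0\in\pi_w(\Lambda'_{\alpha_1})\setminus P(q)$; let $x_{\alpha_1}$ be the unique point of $\Lambda'_{\alpha_1}$ above $w_0$, so $x_{\alpha_1}\in\Lambda'_{\alpha_1}\subset\Lambda_{\alpha_1}$ and $w_0\notin P(q)$.

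For the third point, let $(j_k)_{k\geq0}$ be the itinerary of $w_0$ under $q^{m_1l_n}$ and set $y_k:=f_{n,\alpha_1}^{km_1l_n}(x_{\alpha_1})$; membership of $x_{\alpha_1}$ in $\Lambda'_{\alpha_1}$ means each $y_k$ lies in a box whose closure is compactly contained in $H_{j_k,\alpha_1}\times V_{j_k}$, uniformly in $k$ (there being only three boxes up to the itinerary). Because the holomorphic motion, taken based at $\alpha_1$, is a homeomorphism of the compact set $\widetilde\Lambda_{\alpha_1}$ depending continuously on $\alpha$ and equal to the inclusion at $\alpha_1$, and because its conjugacy property gives $f_{n,\alpha}^{km_1l_n}(x_\alpha)=h_\alpha(y_k)$, one has $f_{n,\alpha}^{km_1l_n}(x_\alpha)\to y_k$ as $\alpha\to\alpha_1$, uniformly in $k$; as $\alpha_1\neq0$ the boxes $H_{j,\alpha}\times V_j$ vary continuously and non-degenerately with $\alpha$, so there is a neighborhood $B\subset\Db^*$ of $\alpha_1$ with $f_{n,\alpha}^{km_1l_n}(x_\alpha)\in H_{j_k,\alpha}\times V_{j_k}$ for all $k\geq0$ and all $\alpha\in B$, that is $x_\alpha\in\Lambda_\alpha$. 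The delicate step is this last one together with the constancy of the second coordinate: the poor behaviour of $\Lambda_f$ under perturbation is exactly what makes it necessary to start inside the sub-blender $\Lambda'$, and the fact that the motion fixes the second coordinate — so that the choice $w_0\notin P(q)$ persists to all nearby parameters — is what the common factor dynamics $q$ provides.
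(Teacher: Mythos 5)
Your proof is correct and follows essentially the same route as the paper: you fix the second coordinate of the continuation via the skew-product structure over the common base $q$ and the Cantor set $\Lambda^{l_n}$, pick $x_{\alpha_1}$ in the nested blender $\Lambda'_{\alpha_1}$ of Proposition \ref{prop-basic} with $w_0\notin P(q)$ by an uncountable-versus-countable argument, and use the compact containment $\overline{1/3H_{j,\alpha_1}}\subset H_{j,\alpha_1}$ together with uniform continuity of the holomorphic motion to keep the orbit of $x_\alpha$ in $\bigcup_j H_{j,\alpha}\times V_j.$ The only (harmless) variation is that you justify the constancy of the $w$-coordinate by rigidity of conjugacies of the expanding base system rather than by the paper's direct observation that a continuous family of points in a totally disconnected set is constant.
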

\begin{proof}
As observe in the proof of Proposition \ref{prop-basic}, $\widetilde\Lambda_0$ is contained in $\{0\}\times\Cb$ and can be identify to $\cap_{i\geq0}q^{-im_1l_n}(\cup_{j=1}^3V_j)$ which is a Cantor set independent of $\alpha.$ As the holomorphic motion of $\widetilde\Lambda_\alpha$ preserves the dynamics and $f_{n,\alpha}$ is a skew product with the same second coordinate that $f_{n,0}$, it follows that for all $\alpha\in\Db$ the projection on the second coordinate of the set $\widetilde\Lambda_\alpha$ is a bijection on this Cantor set. Therefore, the second coordinate of the holomorphic continuation of a point in $\widetilde\Lambda_\alpha$ is constant.

Now, let $\alpha_1$ be in $\Db^*.$ By Proposition \ref{prop-basic} the set
$$\Lambda'_{\alpha_1}:=\bigcap_{i\geq0}f_{n,\alpha_1}^{-im_1l_n}\left(\overline{\cup_{j=1}^3(1/3H_{j,\alpha_1})\times V_j}\right)$$
is a blender. As we had observed after Proposition \ref{prop-inter-robuste}, it implies that the projection on the first coordinate of $\Lambda'_{\alpha_1}$ contains a non-empty open set. In particular, it is uncountable and thus there is $x_{\alpha_1}=(z_{\alpha_1},w_0)\in\Lambda'_{\alpha_1}$ such that $w_0$ doesn't belong to $P(q).$ Finally, since by construction $H_{j,\alpha_1}$ satisfies $\overline{1/3H_{j,\alpha_1}}\subset D(0,\frac{|\alpha_1\beta_n|}{2})\subset H_{j,\alpha_1}$ and since the holomorphic motion is continuous and is compatible with the dynamics, we obtain that in a small neighborhood of $\alpha_1$ the continuation $x_\alpha$ of $x_{\alpha_1}$ verifies $f_{n,\alpha}^{im_1l_n}(x_\alpha)\in\cup_{j=1}^3H_{j,\alpha}\times V_j$ for all $i\geq0,$ i.e. $x_\alpha\in\Lambda_\alpha.$
\end{proof}

The critical set $C(f_{n,\alpha})$ of $f_{n,\alpha}$ is independent of $\alpha$ and is equal to $C(p_{\lambda_n})\times\Cb\cup\Cb\times C(q).$ A key point in what follows is that for some parameter the images of a vertical irreducible component of this set intersects the blender. Following \cite{dujardin-bif}, we use a result contained in \cite{bbd-bif} to obtain such parameter.
\begin{proposition}[{\cite[Proposition 2.2]{dujardin-bif}}]\label{prop-bbd}
Let $(f_\lambda)_{\lambda\in M}$ be a holomorphic family of degree $d$ endomorphism of $\Pb^k$ and let $\gamma\colon M\to\Pb^k$ be a holomorphic map. If $\lambda_0\in M$ belongs to the bifurcation locus then there exists $\lambda_1\in M$ arbitrarily close to $\lambda_0$ such that $\gamma(\lambda_1)\in P(f_{\lambda_1}).$
\end{proposition}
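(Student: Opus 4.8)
The plan is to reduce the statement, via Theorem~\ref{th-misiu}, to a neighbourhood of a Misiurewicz parameter, where the local dynamical picture is completely explicit, and then to run a ``capture'' argument for the moving critical value.

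\textbf{Step 1: reduction to a Misiurewicz parameter.} Since by Theorem~\ref{th-misiu} the Misiurewicz parameters are dense in $\bif(M)$, it suffices to prove the proposition when $\lambda_0$ is itself a Misiurewicz parameter: indeed, if every Misiurewicz parameter can be approximated by parameters $\lambda_1$ with $\gamma(\lambda_1)\in P(f_{\lambda_1})$, the same holds for $\lambda_0$. I would also assume $\gamma(\lambda_0)\notin\mathcal{E}(f_{\lambda_0})$, where $\mathcal{E}$ denotes the exceptional set: otherwise, since the exceptional set of an endomorphism of $\Pb^k$ is contained in its postcritical set, one already has $\gamma(\lambda_0)\in P(f_{\lambda_0})$ and there is nothing to do.

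\textbf{Step 2: extracting the moving critical value.} Unpacking the Misiurewicz condition, there are $n_0\geq0$, a local holomorphic parametrisation $c_\lambda$ of a point of an irreducible component of $C(f_\lambda)$, and a repelling periodic point $z_{\lambda_0}\in\Jc_k(f_{\lambda_0})$ with repelling holomorphic continuation $z_\lambda$, such that $f_{\lambda_0}^{n_0}(c_{\lambda_0})=z_{\lambda_0}$ while $z_\lambda\notin f_\lambda^{n_0}(C(f_\lambda))$ for $\lambda$ arbitrarily close to $\lambda_0$. Set $\beta(\lambda):=f_\lambda^{n_0}(c_\lambda)$; it is holomorphic near $\lambda_0$, satisfies $\beta(\lambda_0)=z_{\lambda_0}$, and $\beta\not\equiv z_\lambda$ (otherwise $z_\lambda\in f_\lambda^{n_0}(C(f_\lambda))$ identically, contradicting the Misiurewicz condition). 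As $f_\lambda^{n_0+N}(c_\lambda)=f_\lambda^{N}(\beta(\lambda))$, it is now enough to find, arbitrarily close to $\lambda_0$, a parameter $\lambda_1$ and $N\geq1$ with $f_{\lambda_1}^{N}(\beta(\lambda_1))=\gamma(\lambda_1)$, for then $\gamma(\lambda_1)=f_{\lambda_1}^{N+n_0}(c_{\lambda_1})\in P(f_{\lambda_1})$.

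\textbf{Step 3: capture.} The remaining point is a capture statement: the forward orbit of the moving critical value $\beta(\lambda)$, which meets the repelling cycle $z_\lambda\subset\Jc_k$ only at the isolated parameter $\lambda_0$, must hit the holomorphic target $\gamma$ at parameters accumulating at $\lambda_0$. The one--variable mechanism is this: in linearising coordinates $\psi_\lambda$ of $f_\lambda^{p}$ at $z_\lambda$ ($p$ the period), $\tilde\beta:=\psi_\lambda\circ\beta$ vanishes at $\lambda_0$ to some finite order $m\geq1$; along a generic one--parameter disc through $\lambda_0$, choosing for each small radius $r$ an iterate $n(r)$ for which $f_\lambda^{n(r)p}(\beta(\lambda))$ has just exited the linearising ball on $\{|\lambda-\lambda_0|=r\}$ yields a loop of fixed size winding $m$ times around the cycle, and since $z_{\lambda_0}\in\Jc_k(f_{\lambda_0})$ and $\gamma(\lambda_0)\notin\mathcal{E}(f_{\lambda_0})$ its further forward images spread over $\Pb^k\setminus\mathcal{E}(f_{\lambda_0})$, so the argument principle in $\lambda$ produces a solution of $f_\lambda^{N}(\beta(\lambda))=\gamma(\lambda)$. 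For $k\geq2$ this last step must be replaced: I would instead observe that the sequence $\bigl(f_\lambda^{N}\circ\beta\bigr)_{N\geq1}$ is not normal at $\lambda_0$ (which follows from $\tilde\beta\not\equiv0$ and the expansion of $f_\lambda^{p}$ at $z_\lambda$, exactly as in the one--variable case), and then invoke the $\Pb^k$ analogue of Montel's theorem of \cite{bbd-bif}, by which a non--normal sequence of iterated critical values captures every holomorphic target, i.e. the analytic sets $\{\lambda:\,f_\lambda^{N}(\beta(\lambda))=\gamma(\lambda)\}$ accumulate at $\lambda_0$.

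I expect the main obstacle to be precisely Step 3: deriving capture of the arbitrary holomorphic section $\gamma$ from non--normality of the iterated critical value. In one complex variable this is Montel's theorem (with holomorphically moving omitted values); for $\Pb^k$ with $k\geq2$ classical Montel fails, and one must use the potential--theoretic substitute of Berteloot--Bianchi--Dupont — equidistribution of the preimages of a generic point, together with the intersection of the associated activity current with the graph of $\gamma$ — which is the technical core of \cite{bbd-bif} that we are entitled to quote. The other steps (reduction to a Misiurewicz parameter, extraction of $\beta$, linearisation, and the normality computation) are elementary.
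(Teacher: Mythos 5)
The paper itself offers no proof of this statement: it is imported verbatim from \cite[Proposition 2.2]{dujardin-bif}, and the argument there is by contradiction. If $\gamma(\lambda)\notin P(f_\lambda)$ for every $\lambda$ in some ball $B$ around $\lambda_0$, then the graph of $\gamma$ avoids the postcritical set, so every inverse branch of every iterate $f_\lambda^n$ is well defined along it; the $d^{kn}$ pulled-back graphs, suitably averaged, produce in the limit an equilibrium web in the sense of \cite{bbd-bif}, hence the family is stable on $B$, contradicting $\lambda_0\in\bif(M)$. (In one variable this is exactly the classical trick of using the $d^n\geq3$ disjoint preimage graphs of $\gamma$ as omitted moving targets for Montel.) Your proposal goes in a genuinely different, direct direction, and it has a real gap.

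The gap is Step 3, and it already contaminates the reduction made in Step 2. The statement you attribute to \cite{bbd-bif} --- that a non-normal sequence of iterated critical values ``captures every holomorphic target'' --- is not a theorem of that paper and cannot hold in that generality: non-normality never forces an orbit to hit one prescribed moving point (even in dimension one, Montel needs three omitted moving targets, which is why the capture of a specific $\gamma$ is obtained by the contradiction/web argument above and not directly from activity of a marked critical point). Worse, in Step 2 you reduce the problem to solving $f^{N}_{\lambda_1}(\beta(\lambda_1))=\gamma(\lambda_1)$ for a point section $\beta$ of the critical set. For $k\geq2$ this is a system of $k$ scalar equations in $\dim M$ parameters, a codimension-$k$ condition; in the very situations where this proposition is used in the paper the parameter space is the one-dimensional family $(f_{n,\alpha})_{\alpha\in\Db}$ and $k=2$, so such an equation generically has no solutions at all. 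The statement to aim for is the codimension-one condition $\gamma(\lambda_1)\in f^{N}_{\lambda_1}(C(f_{\lambda_1}))$, i.e. the graph of $\gamma$ meets the postcritical hypersurfaces, and your argument never addresses it; likewise the winding/argument-principle picture collapses for $k\geq2$, since the image of a one-dimensional parameter disc under $\lambda\mapsto f^{N}_\lambda(\beta(\lambda))$ is a curve in $\Pb^k$ and cannot sweep out a neighborhood of $\gamma(\lambda_0)$. Steps 1 and 2 up to that point (density of Misiurewicz parameters via Theorem \ref{th-misiu}, the inclusion of the exceptional set in $P(f)$, and $\beta\not\equiv z_\lambda$) are fine, but the proof does not close; the viable route is the one of \cite{dujardin-bif}, resting on the equilibrium-web machinery of \cite{bbd-bif}.
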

\begin{lemma}\label{le-point-rep-pc}
There exist $\alpha_0\in\Db^*$ and $x_{\alpha_0}\in\Lambda_{\alpha_0}$ such that $x_{\alpha_0}\in\cup_{i\geq0}f^i_{n,\alpha_0}(C(p_{\lambda_n})\times\Cb)$ and $x_{\alpha_0}\notin\cup_{i\geq0}f^i_{n,\alpha_0}(\Cb\times C(q)).$
\end{lemma}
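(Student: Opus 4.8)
The idea is to apply Proposition \ref{prop-bbd} to the one-parameter family $(f_{n,\alpha})_{\alpha\in\Db}$, using as the holomorphic map $\gamma$ the continuation of a suitable point of the blender. First I would pick an irreducible component $V_{\lambda_n}\times\Cb$ of $C(p_{\lambda_n})\times\Cb\subset C(f_{n,\alpha})$ whose forward iterates reach the repelling point $0$; this is possible because, by Lemma \ref{le-para}, $0$ lies in $P(p_{\lambda_n})$, so some $f^{N}_{n,0}$-image of a vertical critical component of $p_{\lambda_n}$ passes through $\{0\}\times\Cb$, and hence meets $\widetilde\Lambda_0$. The point where it meets $\widetilde\Lambda_0$ can be taken inside the smaller blender $\Lambda'_0$—equivalently, by the uncountability of the projection of $\Lambda'_{\alpha_1}$ established after Proposition \ref{prop-inter-robuste} and used in Lemma \ref{le-lambda}, one can choose such an intersection point whose $w$-coordinate $w_0$ avoids the (countable, or at least small) postcritical set $P(q)$ and also avoids the critical-image set $\cup_{i\geq0}q^i(C(q))$ in the $w$-direction.

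Concretely, I would fix $\alpha_1\in\Db^*$ as in Lemma \ref{le-lambda}, giving a point $x_{\alpha_1}=(z_{\alpha_1},w_0)\in\Lambda'_{\alpha_1}$ with $w_0\notin P(q)$, and moreover arrange (again using uncountability of the fibre over $\Lambda'_{\alpha_1}$, since $P(q)\cup\text{(the $w$-coordinates of }\cup_i f^i(\Cb\times C(q)))$ is a proper subset) that $x_{\alpha_1}\notin\cup_{i\geq0}f^i_{n,\alpha_1}(\Cb\times C(q))$: this last condition depends only on $w_0$ because $f_{n,\alpha}$ is a skew product, so $\Cb\times C(q)$ maps to sets of the form $\Cb\times\{q^i(\text{crit pt})\}$. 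Let $x_\alpha=(z_\alpha,w_0)$ be the holomorphic continuation of $x_{\alpha_1}$ inside $\widetilde\Lambda_\alpha$ (well-defined and with constant $w$-coordinate $w_0$, by Lemma \ref{le-lambda}), defined on a neighborhood $B\subset\Db^*$ of $\alpha_1$ on which $x_\alpha\in\Lambda_\alpha$.

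Now set $\gamma(\alpha):=x_\alpha$, a holomorphic map on $B$. Since the whole family $(f_{n,\alpha})_{\alpha\in\Db}$ lies in the bifurcation locus of $\Hc_d(\Pb^2)$—indeed $\widetilde\Lambda_\alpha$ is a basic repeller in $\Jc_2(f_{n,\alpha})$ that meets $P(f_{n,\alpha})$ properly by Proposition \ref{prop-basic}, so Proposition \ref{prop-ouvert-de-bif} applies and $\Uc_n\subset\bif(\Hc_d(\Pb^2))$—in particular every $\alpha_1\in B$ is a bifurcation parameter of this family. Applying Proposition \ref{prop-bbd} to $(f_{n,\alpha})_{\alpha\in B}$ and $\gamma$, I obtain $\alpha_0\in B$ arbitrarily close to $\alpha_1$ with $x_{\alpha_0}=\gamma(\alpha_0)\in P(f_{n,\alpha_0})$. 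Because $f_{n,\alpha_0}$ is a skew product and $w_0\notin P(q)$, the only way $x_{\alpha_0}$ can be postcritical is through the vertical part of the critical set, i.e. $x_{\alpha_0}\in\cup_{i\geq0}f^i_{n,\alpha_0}(C(p_{\lambda_n})\times\Cb)$; and by our choice of $w_0$ it is not in $\cup_{i\geq0}f^i_{n,\alpha_0}(\Cb\times C(q))$. Finally, staying inside $B$ keeps $x_{\alpha_0}\in\Lambda_{\alpha_0}$, which is exactly the claim.

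The main obstacle is the bookkeeping of the $w$-coordinate: one must simultaneously force $w_0$ to avoid $P(q)$, to avoid the $q$-images of the critical values of $q$, and to have the continuation $x_\alpha$ remain in the blender $\Lambda_\alpha$ on a full neighborhood of $\alpha_1$—all of which rest on the blender's fibre over $\widetilde\Lambda$ being uncountable (the role of the second, nested blender $\Lambda'$ in Proposition \ref{prop-basic}) together with the skew-product structure that decouples the $w$-dynamics from $\alpha$. Once these are in place, the application of Proposition \ref{prop-bbd} and the splitting of the postcritical set into horizontal and vertical parts is routine.
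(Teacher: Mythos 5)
Your first half follows the paper's intended route (Lemma \ref{le-lambda} plus Proposition \ref{prop-bbd}), but the step where you justify applying Proposition \ref{prop-bbd} contains a genuine gap. That proposition requires the base parameter $\alpha_1$ to lie in the bifurcation locus \emph{of the one-parameter family} $(f_{n,\alpha})_{\alpha\in\Db}$, not of the ambient space $\Hc_d(\Pb^2)$. Your argument for this is doubly flawed. First, it is circular: you invoke Proposition \ref{prop-ouvert-de-bif} via Proposition \ref{prop-basic}, but Proposition \ref{prop-basic} only asserts that an intersection of $P(f)$ with $\widetilde\Lambda_f$, \emph{if it exists}, must be proper; it does not provide any intersection. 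Producing such an intersection of the postcritical set with the blender is precisely what Lemma \ref{le-point-rep-pc} (and the lemma after it) is for, so you cannot use the resulting open set of bifurcations as an input here. Second, even granting $\Uc_n\subset\bif(\Hc_d(\Pb^2))$, membership in the bifurcation locus of the big family does not imply that any $\alpha_1$ is a bifurcation parameter of the subfamily $(f_{n,\alpha})_{\alpha\in\Db}$: the instability may be entirely transverse to this one-dimensional slice, and a priori the slice could be stable, in which case Proposition \ref{prop-bbd} is simply unavailable.

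The paper handles exactly this dichotomy. If the bifurcation locus of $(f_{n,\alpha})_{\alpha\in\Db}$ is non-empty, it contains a point $\alpha_1\in\Db^*$ (it is a perfect set) and the argument proceeds as you describe. If instead the family is stable, a different mechanism is used: repelling points can be followed holomorphically in $\alpha$; since $\widetilde\Lambda_0\subset\{0\}\times\Cb$ and $0$ is postcritical for $p_{\lambda_n}$ (Lemma \ref{le-para}), there are $z_1\in C(p_{\lambda_n})$ and $i\geq1$ with $f^i_{n,0}(\{z_1\}\times\Cb)=\{0\}\times\Cb$, and stability transports the resulting relation to all $\alpha$, giving $r_\alpha\in f^i_{n,\alpha}(\{z_1\}\times\Cb)$ for every repelling point of $\widetilde\Lambda_\alpha$; density of repelling points then yields $\Lambda_\alpha\subset f^i_{n,\alpha}(\{z_1\}\times\Cb)$, and one concludes by picking $x_{\alpha_0}\in\Lambda_{\alpha_0}$ with second coordinate outside $P(q)$. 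Your proposal does not cover this stable case, so as written it does not prove the lemma. (Your extra requirement that $w_0$ avoid the $q$-images of the critical values of $q$ is harmless but unnecessary: the skew-product structure plus $w_0\notin P(q)$ already rules out the vertical part of the postcritical set, exactly as in the paper.)
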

\begin{proof}
We consider two cases. First, if the bifurcation locus of the family $(f_{n,\alpha})_{\alpha\in\Db}$ is not empty then there exists $\alpha_1\in\Db^*$ belonging to it since it is a perfect set of $\Db.$ By Lemma \ref{le-lambda}, there exists a point $x_{\alpha_1}=(z_{\alpha_1},w_0)$ in $\Lambda_{\alpha_1}$ such that $w_0$ is not in the postcritical set $P(q)$ and its holomorphic continuation $x_\alpha$ satisfies $x_\alpha\in\Lambda_\alpha$ for $\alpha$ in a small neighborhood of $\alpha_1.$ Hence, Proposition \ref{prop-bbd} implies that there exists $\alpha_0$ arbitrarily close to $\alpha_1$ such that $x_{\alpha_0}=(z_{\alpha_0},w_0)$ is in the postcritical set of $f_{n,\alpha_0}$ i.e. $x_{\alpha_0}\in\cup_{i\geq0}f^i_{n,\alpha_0}(C(f_{n,\alpha_0})).$ As $w_0$ is not in $P(q)$ it follows that $x_{\alpha_0}\in\cup_{i\geq0}f^i_{n,\alpha_0}(C(p_{\lambda_n})\times\Cb)$ and $x_{\alpha_0}\notin\cup_{i\geq0}f^i_{n,\alpha_0}(\Cb\times C(q)).$

We now assume that the family $(f_{n,\alpha})_{\alpha\in\Db}$ is stable. In particular, we can follow holomorphically each repelling point for $\alpha\in\Db.$ We choose $\alpha_0\in\Db^*,$ a repelling periodic point $r_{\alpha_0}=(z_{\alpha_0},w)$ in $\widetilde\Lambda_{\alpha_0}$ and we denote by $r_{\alpha}=(z_{\alpha},w)$ its continuation. Since $\widetilde\Lambda_\alpha$ moves holomorphically for $\alpha\in\Db$ and $\widetilde\Lambda_0\subset\{0\}\times\Cb$ we have $z_0=0.$ On the other hand, as $0$ is postcritical for $p_{\lambda_n}$ there are $z_1\in C(p_{\lambda_n})$ and $i\geq1$ such that $f^i_{n,0}(\{z_1\}\times\Cb)=\{0\}\times\Cb.$ We use a second time the fact that the family is stable to obtain that $r_\alpha\in f_{n,\alpha}^i(\{z_1\}\times\Cb).$ Since $\Lambda_\alpha\subset\widetilde\Lambda_\alpha$ and the repelling points are dense in $\widetilde\Lambda_\alpha$ this implies that $\Lambda_\alpha\subset f_{n,\alpha}^i(\{z_1\}\times\Cb).$ We then conclude as above by taking a point $x_{\alpha_0}=(z_0,w_0)\in\Lambda_{\alpha_0}$ with $w_0\notin P(q).$
\end{proof}

\begin{lemma}
If $\alpha_0$ is as in Lemma \ref{le-point-rep-pc} then the postcritical set $P(f_{n,\alpha_0})$ contains a vertical graph tangent to $C_{\delta_{\alpha_0}}$ in $H_{j,\alpha_0}\times V_j$ for some $j\in\{1,2,3\}.$
\end{lemma}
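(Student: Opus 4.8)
The plan is to upgrade the single point $x_{\alpha_0}\in\Lambda_{\alpha_0}$ produced by Lemma \ref{le-point-rep-pc} to a full vertical graph contained in the postcritical set, using the fact that $x_{\alpha_0}$ lies on the image of a \emph{vertical} component of the critical set. First I would fix $j\in\{1,2,3\}$ so that $x_{\alpha_0}\in H_{j,\alpha_0}\times V_j$; such $j$ exists because $x_{\alpha_0}\in\Lambda_{\alpha_0}\subset\overline{\cup_{j}H_{j,\alpha_0}\times V_j}$ (and by shrinking we may take the point in the interior). By Lemma \ref{le-point-rep-pc} there exist $i_0\geq0$ and an irreducible component $V_{\lambda_n}$ of $C(p_{\lambda_n})$ such that $x_{\alpha_0}\in f^{i_0}_{n,\alpha_0}(V_{\lambda_n}\times\Cb)$. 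The set $V_{\lambda_n}\times\Cb=\{z=z_1\}$ is a vertical line, hence its image $\Sigma:=f^{i_0}_{n,\alpha_0}(\{z_1\}\times\Cb)$ is again a vertical curve: since $f_{n,\alpha_0}$ is a skew product $(z,w)\mapsto(p_{\lambda_n}(z)+\alpha_0\beta_n w,q(w))$, the line $\{z=z_1\}$ is mapped to the graph $\{z=p_{\lambda_n}(z_1)+\alpha_0\beta_n w\}$, which is a vertical affine graph of slope $\alpha_0\beta_n$, and inductively $\Sigma$ is a vertical polynomial graph $\{z=\gamma(w)\}$ over $\Cb$ in the $w$-variable.

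The second step is to check that the piece of $\Sigma$ lying over $V_j$ is a vertical graph tangent to the cone field $C_{\delta_{\alpha_0}}$ inside $H_{j,\alpha_0}\times V_j$. The tangency is where the skew-product structure does the work: a vertical graph $\{z=\gamma(w)\}$ is tangent to $C_{\delta}$ at a point over $w$ exactly when $|\gamma'(w)|<\delta$. Now $\Sigma$ is obtained from the line $\{z=z_1\}$ (which has $\gamma'\equiv0$, hence trivially tangent to any cone) by $i_0$ iterations of $f_{n,\alpha_0}$, and $f_{n,\alpha_0}^{m_1 l_n}$ is a rescaling by $\alpha_0\beta_n$ of the map $g_n$ which, by the construction in the proof of Theorem \ref{th-constru}, contracts $C_{\delta_0}$ on $\overline{\cup_j\Db\times V_j^{l_n}}$. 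Rescaling preserves the cone field $C_{\delta_0}$, so $f_{n,\alpha_0}^{m_1 l_n}$ contracts $C_{\delta_{\alpha_0}}$ on $\overline{\cup_j H_{j,\alpha_0}\times V_j}$ (with $\delta_{\alpha_0}=|\alpha_0\beta_n|\delta_0$), and the finitely many remaining iterations between $i_0$ and the nearest multiple of $m_1 l_n$ only bring the slope into the cone by taking $\delta_0$ small, exactly as in the proof of Proposition \ref{prop-inter-robuste}. Since $\Sigma$ passes through $x_{\alpha_0}\in H_{j,\alpha_0}\times V_j$ and its tangent directions lie in $C_{\delta_{\alpha_0}}$, the connected component of $\Sigma\cap(H_{j,\alpha_0}\times V_j)$ through $x_{\alpha_0}$ is a vertical graph $\Gamma_\sigma$ with $\sigma\colon V_j\to H_{j,\alpha_0}$ and $\|\sigma'\|_{C^0}<\delta_{\alpha_0}$ — here one uses that over the (simply connected, by Lemma \ref{le-vois-rep}) set $V_j$ a vertical curve with slope of modulus $<\delta_{\alpha_0}$ that meets $H_{j,\alpha_0}$ at one point stays, by the length estimate, inside $H_{j,\alpha_0}$, so it is globally a graph over $V_j$.

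Finally, since $\Sigma\subset P(f_{n,\alpha_0})$ by construction (it is an image of a component of $C(f_{n,\alpha_0})$), this graph $\Gamma_\sigma\subset\Sigma$ is contained in the postcritical set, which is the assertion. The main obstacle I expect is the bookkeeping in the middle step: one must make sure that the cone contraction, which is only stated for $f_{n,\alpha_0}^{m_1 l_n}$ on the specific sets $\overline{\cup_j H_{j,\alpha_0}\times V_j}$, actually applies to the finitely many intermediate iterates needed to reach $x_{\alpha_0}$ from $\{z=z_1\}$ — i.e. that the forward orbit of the relevant arc stays in the region where the cone is contracted long enough, which is precisely the role played by the vertical rescaling and the choice of $\delta_0$ small. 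Once this is arranged the rest is the same length-estimate argument already used twice in Section \ref{sec-blender}.
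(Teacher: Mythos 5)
There is a genuine gap, and it occurs at the very first step. For the skew product $f_{n,\alpha_0}(z,w)=(p_{\lambda_n}(z)+\alpha_0\beta_n w,q(w))$ the image of the vertical line $\{z=z_1\}$ is \emph{not} the graph $\{z=p_{\lambda_n}(z_1)+\alpha_0\beta_n w\}$: the image point has second coordinate $q(w)$, so the image is the curve parametrized by $w\mapsto(p_{\lambda_n}(z_1)+\alpha_0\beta_n w,\,q(w))$, which over the base coordinate is a degree-$d$ branched multi-graph with branch points over the critical values of $q$, and whose tangent vector $(\alpha_0\beta_n,q'(w))$ becomes horizontal at the critical points of $q$. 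Inductively, $X:=f_{n,\alpha_0}^{\widetilde l}(\{z_1\}\times\Cb)$ is a curve of large degree over the base, not ``a vertical polynomial graph $\{z=\gamma(w)\}$ over $\Cb$''. Both of your later steps lean on this false claim: the assertion that the piece of $X$ over $V_j$ is globally a graph, and the slope control. Relatedly, your argument never uses the second conclusion of Lemma \ref{le-point-rep-pc}, namely $x_{\alpha_0}\notin\cup_{i}f^i_{n,\alpha_0}(\Cb\times C(q))$ (equivalently $w_0\notin P(q)$); but that hypothesis is exactly what guarantees that $X$ is smooth and non-horizontal at $x_{\alpha_0}$, hence a graph over some \emph{small} neighborhood $U$ of $w_0$ — with, a priori, no bound on its slope. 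Finally, the ``bookkeeping'' you flag is not merely bookkeeping: the cone contraction is only available on $\overline{\cup_j H_{j,\alpha_0}\times V_j}$ (after rescaling), and the intermediate images $f^i_{n,\alpha_0}(\{z_1\}\times\Cb)$, $0\leq i\leq\widetilde l$, have no reason to stay in that region, so the cone condition cannot be propagated from the critical line up to time $\widetilde l$.

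The paper resolves these points differently: it only extracts from $x_{\alpha_0}\in f^{\widetilde l}_{n,\alpha_0}(\{z_1\}\times\Cb)$, with $\widetilde l$ minimal and $w_0\notin P(q)$, a local vertical graph of $X$ over a small neighborhood $U$ of $w_0$, and then applies \emph{further} iterates $f^{k l_n m_1}_{n,\alpha_0}$ with $k$ large. Because $x_{\alpha_0}\in\Lambda_{\alpha_0}$, its forward orbit stays in $\cup_j H_{j,\alpha_0}\times V_j$, where the inverse branches $\psi_{i_0i_1}\circ\cdots\circ\psi_{i_{k-1}i_k}(V_{i_k})$ eventually fall inside $U$, so $f^{k l_n m_1}_{n,\alpha_0}(X)$ contains a graph over a full $V_{i_k}$; along this orbit the estimates on $D g_n^{m_1 l_n}$ force the graph to be tangent to $C_{\delta_{\alpha_0}}$ and to land in $H_{i_k,\alpha_0}\times V_{i_k}$, and forward invariance of the postcritical set concludes. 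Your proposal is missing this ``push forward along the orbit inside the blender'' mechanism, which is where both the graph-over-all-of-$V_j$ property and the cone tangency actually come from.
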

\begin{proof}
Let $\alpha_0\in\Db^*$ and let $x_{\alpha_0}=(z_{\alpha_0},w_0)$ be a point in $\Lambda_{\alpha_0}$ for $f_{n,\alpha_0}$ such that $x_{\alpha_0}\in\cup_{i\geq0}f^i_{n,\alpha_0}(C(p_{\lambda_n})\times\Cb)$ and $x_{\alpha_0}\notin\cup_{i\geq0}f^i_{n,\alpha_0}(\Cb\times C(q)).$ There exist $z_1\in C(p_{\lambda_n}),$ $i_0\in\{1,2,3\}$ and $\widetilde l\geq0$ minimal such that $x_{\alpha_0}$ belongs to $H_{i_0,\alpha_0}\times V_{i_0}$ and to $X:=f_{n,\alpha_0}^{\widetilde l}(\{z_1\}\times\Cb).$ We claim that the image of $X$ by a large iterate of $f^{l_nm_1}_{n,\alpha_0}$ contains a vertical graph tangent to $C_{\delta_{\alpha_0}}.$ Indeed, since $w_0\notin P(q)$ and $\widetilde l$ is minimal $X$ is smooth at $x_{\alpha_0}$ and cannot be tangent to the horizontal direction. Therefore, $X$ contains a vertical graph over a small neighborhood $U$ of $w_0.$ If $(i_k)_{k\geq0}$ denotes the sequence in $\{1,2,3\}$ such that $q^{kl_nm_1}(w_0)\in V_{i_k}$ then for $k$ large enough the set $\psi_{i_0i_1}\circ\cdots\circ\psi_{i_{k-1}i_k}(V_{i_k})$ is contained in $U.$ Here, $\psi_{ij}\colon V_{j}\to W_{ij}\subset V_{i}$ is the inverse branch of $q^{lm_1}$ given by Lemma \ref{le-vois-rep}. Hence, $f^{kl_nm_1}_{n,\alpha_0}(X)$ contains a vertical graph over $V_{i_k}.$ Moreover, by the estimates on $D_{(z,w)}g_n^{m_1l_n}$ obtained in the proof of Theorem \ref{th-constru}, if $k$ is large enough then this graph has to be tangent to $C_{\delta_{\alpha_0}}$ and its image is contained in $H_{i_k,\alpha_0}\times V_{i_k}.$ This conclude the proof.
\end{proof}

\begin{proof}[Proof of Corollary \ref{coro-bif}]
Let $(p,q)$ be in $\Pc_d\times\Pc_d.$ If $p$ is in the bifurcation locus of $\Pc_d$ then as we have seen above, there exist a polynomial $p_{\lambda_n}$ close to $p,$ a number $\beta_n\in\Cb$ close to $0$ and $\alpha_0\in\Db^*$ such that the postcritical set of the map $f_{n,\alpha_0}(z,w):=(p_{\lambda_n}(z)+\alpha_0\beta_nw,q(w))$ contains a vertical graph tangent to $C_{\delta_{\alpha_0}}$ in $H_{j,\alpha_0}\times V_j$ for some $j\in\{1,2,3\}.$ Moreover, a large iterate of $f_{n,\alpha_0}$ possesses a blender $\Lambda_{\alpha_0}$ contained in the basic set $\widetilde\Lambda_{\alpha_0}$ and satisfies Proposition \ref{prop-inter-robuste} with the constant $\delta_{\alpha_0}$ and the sets $H_{j,\alpha_0}$ and $V_j.$ Therefore, if $f\in\Uc_n$ is close enough to $f_{n,\alpha_0}$ then the continuation of $\widetilde\Lambda_{\alpha_0}$ for $f$ intersects its postcritical set. By Proposition \ref{prop-basic} this intersection is proper. Hence, by Proposition \ref{prop-ouvert-de-bif} $f_{n,\alpha_0}$ is in the interior of the bifurcation locus $\bif(\Hc_d(\Pb^2))$ and thus $(p,q)$ is in the closure of the interior of $\bif(\Hc_d(\Pb^2)).$
\end{proof}

\section{Fat attractors}\label{sec-att}
In this section, we prove Corollary \ref{coro-as} and Theorem \ref{th-att}. The former is a direct consequence of Theorem \ref{th-constru} and the latter follows from a construction based on Proposition \ref{prop-exi-selle}. An elementary fact about attracting set that we will use several times is that if $U$ is an open subset of $\Pb^2$ and $f$ is a rational map of $\Pb^2$ such that $f(\overline{U})\subset U$ then $g(\overline{U})\subset U$ for all small perturbations $g$ of $f,$ i.e. $U$ is also a trapping region for $g.$
\begin{proof}[Proof of Corollary \ref{coro-as}]
Let $(p,q)$ be in $\Pc_d\times\Pc_d.$ Exactly as in the proof of Corollary \ref{coro-bif}, if $p$ is in the bifurcation locus of $\Pc_d$ then arbitrarily close to it and $0$ respectively there exist $p_{\lambda_n}$ and $\beta_n\in\Cb$ such that for all $\alpha\in\Db^*$ the $m_1l_n$-th iterate of $f_{n,\alpha}(z,w):=(p_{\lambda_n}(z)+\alpha\beta_nw,q(w))$ has a blender of saddle type. To be more precise, there are three open sets $V_1,$ $V_2$ and $V_3$ in $\Cb$ such that $\overline{\cup_{j=1}^3D(0,|\alpha\beta_n|)\times V_j}\subset f_{n,\alpha}^{m_1l_n}(\cup_{j=1}^3D(0,|\alpha\beta_n|)\times V_j).$ On the other hand, $0$ is an attracting fixed point for $p_{\lambda_n}^{m_1l_n}$ and it is easy to check that the line $\{z=0\}$ is an attracting set in $\Pb^2$ for $f_{n,0}^{m_1l_n}.$ Hence, it admits a trapping region $U$ and if $\alpha\in\Db^*$ is close enough to $0$ then $\cup_{j=1}^3D(0,|\alpha\beta_n|)\times V_j\subset U$ and $\overline{f_{n,\alpha}^{m_1l_n}(U)}\subset U,$ i.e. $f_{n,\alpha}$ has an attracting set containing $\cup_{j=1}^3D(0,|\alpha\beta_n|)\times V_j.$ Since the inclusion 
$$\overline{\cup_{j=1}^3D(0,|\alpha\beta_n|)\times V_j}\subset f_{n,\alpha}^{m_1l_n}(\cup_{j=1}^3D(0,|\alpha\beta_n|)\times V_j)$$
is stable under small perturbations of $f_{n,\alpha}$ it follows that $(p,q)$ is in the closure of the interior of endomorphisms possessing a proper attracting set with non-empty interior.
\end{proof}
All these examples are not transitive as they all possess an attracting point near $[0:1:0].$ However, as we will see it turns out that the composition of an automorphism and of two such maps can have an attractor with non-empty interior.

The remaining part of this section is devoted to the proof of Theorem \ref{th-att} which splits into the three elementary lemmas below. We don't try to obtain the existence of attractors with non-empty interior in a general setting. For simplicity, we only use perturbations of iterates of the map $(z,w)\mapsto(z,q(w))$ where $q(w)=w^4.$ The ideas are the following. Even if $q$ is far from being transitive on $\Pb^1,$ its postcritical set is particularly simple. Hence, if we consider the automorphism of $\Pb^1$ given by $\psi(w)=\frac{iw+1}{w+i}$ it is easy to check that for all integers $a,b\geq1$ each critical point of the map $q^a\circ\psi\circ q^b$ is eventually mapped to the fixed repelling point $1.$ Therefore, its Fatou set is empty and this map is transitive on $\Pb^1.$ In what follows, we obtain an endomorphism $f$ with an attracting set $A$ containing a blender of saddle type. The interior of the unstable set of this blender contains an algebraic curve which is thus included in the interior of $A.$ On the other hand, $f$ preserves a pencil of lines $\Pc$ and its action on $\Pc$ is given by $q^a\circ\psi\circ q^b$ for some $a,b\geq1.$ Hence, arguments going back to \cite{jonsson-weickert-attractor} (see also \cite{fs-example}) imply that $f_{|A}$ is transitive, i.e. $A$ is an attractor.

The first lemma is a variation on Lemma \ref{le-vois-rep} and Proposition \ref{prop-exi-selle} where we use the three repelling fixed points of $q,$ $r_1:=1,$ $r_2:=e^{2i\pi/3}$ and $r_3:=e^{4i\pi/3}.$

\begin{lemma}\label{le-att1}
Let $R>0.$ There exist $0<\lambda<1,$ $\epsilon_0>0,$ $l_0\geq1$ and a family of neighborhoods $(U^l_i)_{l\geq l_0}$ of $r_i$ such that the diameter of $U_i^l$ converges exponentially fast to $0$ with $l$ and for each $l\geq l_0$ and $\alpha\in\Cb^*$ the map $g_\alpha(z,w):=(\lambda z+\alpha w,q^l(w))$ satisfies
$$\overline{(\alpha\epsilon_0^{-1}\Db)\times(R\Db\setminus R^{-1}\Db)}\subset g_\alpha((\alpha\epsilon_0^{-1}\Db)\times\cup_{i=1}^3U_i^l).$$
\end{lemma}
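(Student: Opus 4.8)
The statement is essentially a quantitative version of the surjectivity argument already used in the proof of Proposition \ref{prop-exi-selle} (via Lemma \ref{le-vois-rep}), so I would build the neighborhoods $U_i^l$ by exactly the same mechanism and then check the covering inclusion coordinate by coordinate. First I would fix the three repelling fixed points $r_1=1$, $r_2=e^{2i\pi/3}$, $r_3=e^{4i\pi/3}$ of $q(w)=w^4$; since they are fixed, their multiplier is $\chi_i=q'(r_i)=4r_i^3$, with $|\chi_i|=4$, and none of them lies in the postcritical set $P(q)=\{0,\infty\}$. Applying the construction of Lemma \ref{le-vois-rep} with $m_1=1$ (or rather the plain one–point version of it) produces, for each $i$, a decreasing family of connected neighborhoods $(U_i^l)_{l\geq l_0}$ of $r_i$ with $q^l(U_i^{l+1})$ hitting $U_i^l$, with diameters shrinking like $4^{-l}$ (hence exponentially), and — this is the key output I will quote — with $\overline{U_1^l\cup U_2^l\cup U_3^l}\subset q^l(U_i^l)$ for every $i$ once $l\geq l_0$. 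The constant $\lambda\in(0,1)$ and $\epsilon_0>0$ are then free parameters I get to choose at the end.

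**Key steps.** The map $g_\alpha(z,w)=(\lambda z+\alpha w, q^l(w))$ is a skew product, so to prove the inclusion $\overline{(\alpha\epsilon_0^{-1}\Db)\times(R\Db\setminus R^{-1}\Db)}\subset g_\alpha((\alpha\epsilon_0^{-1}\Db)\times(U_1^l\cup U_2^l\cup U_3^l))$ I fix a target point $(z_0,w_0)$ in the left–hand closure and solve for a preimage. For the second coordinate: I need some $w_1\in U_1^l\cup U_2^l\cup U_3^l$ with $q^l(w_1)=w_0$. This is where the annulus $R\Db\setminus R^{-1}\Db$ and the circle-covering property enter — one first checks, enlarging $l_0$ if necessary, that the three closed neighborhoods $\overline{U_i^l}$ together cover the unit circle $\{|w|=1\}$ (they shrink onto the three points $r_i$ which are the three fourth–cube... rather the three cube roots of unity, and a compactness/continuity argument on $q^l$ restricted to a neighborhood of the circle gives $\overline{R\Db\setminus R^{-1}\Db}\cup(\text{a neighborhood of }\{|w|=1\})$ inside $q^l(U_i^l)$ for the appropriate $i$); more precisely the second bullet of Lemma \ref{le-vois-rep} gives $q^l(U_i^l)\supset\overline{U_1^l\cup U_2^l\cup U_3^l}$ and a short bootstrap shows the images $q^l(U_i^l)$ also contain a fixed annular neighborhood of the unit circle independent of $l$, which for $R>1$ close enough to $1$ (and then the general $R$ by iterating/adjusting $l$) contains $\overline{R\Db\setminus R^{-1}\Db}$. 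Having produced $w_1\in U_{i}^l$, the first coordinate is then trivial: I just need $z_1$ with $\lambda z_1+\alpha w_1=z_0$, i.e. $z_1=(z_0-\alpha w_1)/\lambda$, and I must check $z_1\in\alpha\epsilon_0^{-1}\Db$. Since $|z_0|<|\alpha|\epsilon_0^{-1}$, $|w_1|\leq 1+o(1)$ (it is near the unit circle), this reads $|z_0-\alpha w_1|<\lambda|\alpha|\epsilon_0^{-1}$, which holds as soon as $\epsilon_0^{-1}+1$ is, say, $<\lambda\epsilon_0^{-1}$ — impossible for $\lambda<1$ as stated; so the honest choice is to pick $\epsilon_0$ \emph{small} (so $\epsilon_0^{-1}$ large, dominating the additive $|w_1|\approx 1$ term) and $\lambda$ close to $1$, after which $\lambda\epsilon_0^{-1}>\epsilon_0^{-1}+1$ and the estimate closes. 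This is the step to get right: the order of quantifiers is "$R$ given, then choose $\epsilon_0$ small, then $\lambda<1$ close to $1$ (both depending on $R$), then $l_0$ large."

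**Main obstacle.** The only genuine subtlety is the second–coordinate surjectivity onto the full annulus $R\Db\setminus R^{-1}\Db$ for an \emph{arbitrary} $R>0$, not just $R$ near $1$: the sets $U_i^l$ are tiny neighborhoods of points on the unit circle, so a priori $q^l(U_i^l)$ is a region near the unit circle, and one must exploit that $q^l$ is strongly expanding ($|\chi_i|^l=4^l\to\infty$) to get these images to sweep out a fixed macroscopic annulus. Concretely, the argument in Lemma \ref{le-vois-rep} already arranges $q^l(U_i^l)$ to contain $\overline{U_1^l\cup U_2^l\cup U_3^l}$, and by pushing one more step and using expansion one sees that $q^l(U_i^l)$ actually contains a \emph{fixed} neighborhood $N$ of the unit circle, independent of $l\geq l_0$; then for the given $R$ one either shrinks the claim to $R$ with $\overline{R\Db\setminus R^{-1}\Db}\subset N$ and notes that a larger annulus is reached after replacing $l$ by $l+l'$ for a fixed $l'$ depending on $R$, or simply incorporates the finitely many extra $q$-iterations into the definition of $U_i^l$ and $l_0$. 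Everything else — exponential decay of the diameters, the trivial first–coordinate solve, openness — is routine. I expect the write-up to be two short paragraphs: one setting up $U_i^l$ from Lemma \ref{le-vois-rep} with $m_1=1$ and recording the uniform annular covering, and one doing the skew-product preimage computation with the explicit choice $\epsilon_0$ small, $\lambda$ near $1$.
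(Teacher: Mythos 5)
Your treatment of the second coordinate is essentially the paper's: the paper takes $l_1$ with $\overline{R\Db\setminus R^{-1}\Db}\subset\cap_{i=1}^3q^{l_1}(V_i^0)$ and sets $U_i^l:=V_i^{l-l_1}$, which is exactly your ``incorporate the finitely many extra $q$-iterations into the definition of $U_i^l$'' option, and the rescaling of the case $\alpha=\epsilon_0$ to general $\alpha\in\Cb^*$ by conjugating with $(z,w)\mapsto(\alpha\epsilon_0^{-1}z,w)$ is also how the paper concludes. One point worth making explicit: you need the annulus to lie in $q^l(U_i^l)$ for \emph{every} $i$ (an intersection over $i$, as in the paper), not just for some $i$, because of what follows.

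The first-coordinate step, however, contains a genuine error, and it is the heart of the lemma. You fix one preimage $w_1$ of $w_0$ and try to solve $\lambda z_1+\alpha w_1=z_0$ with $z_1\in\alpha\epsilon_0^{-1}\Db$; the inequality you need, $\epsilon_0^{-1}+1<\lambda\epsilon_0^{-1}$, is equivalent to $(\lambda-1)\epsilon_0^{-1}>1$ and is impossible for every $\lambda<1$ and every $\epsilon_0>0$ -- you notice this, but your proposed fix ($\epsilon_0$ small, $\lambda$ close to $1$) asserts the very same impossible inequality, so the estimate never closes. Indeed it cannot: for fixed $w_1$ the map $z\mapsto\lambda z+\alpha w_1$ is a contraction, so the image of $\alpha\epsilon_0^{-1}\Db$ is a disc of radius $\lambda|\alpha|\epsilon_0^{-1}<|\alpha|\epsilon_0^{-1}$ and can never cover $\overline{\alpha\epsilon_0^{-1}\Db}$. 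The correct mechanism is the blender covering of Lemma \ref{le-triangle}: since $r_1,r_2,r_3$ are non-aligned with $r_1+r_2+r_3=0$, one can choose $0<\lambda<1$ and $\epsilon_0>0$ so that $\overline{\Db}\subset\cup_{i=1}^3\phi_i(\Db)$ with $\phi_i(z)=\lambda z+\epsilon_0 r_i$, with a definite margin $r>0$. Then, given the target $(z_0,w_0)$ (after rescaling to $\Db$), you must first pick the branch $i_0$ \emph{as a function of} $z_0$ so that $D(z_0,r)\subset\phi_{i_0}(\Db)$, then pick the preimage $w_1\in U_{i_0}^l$ of $w_0$ (this is why the annulus must be in $q^l(U_i^l)$ for all three $i$), and finally use that $w_1$ is close to $r_{i_0}$ for $l\geq l_0$ large, so $\lambda z+\epsilon_0 w_1$ is within $r$ of $\phi_{i_0}(z)$ and Rouch\'e's theorem (as in the proof of Proposition \ref{prop-exi-rep}) produces $z_1\in\Db$ with $\lambda z_1+\epsilon_0 w_1=z_0$. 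Without this branch selection driven by $z_0$, the statement is false as you set it up.
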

\begin{proof}
We consider the open sets $V_i^l$ given by Lemma \ref{le-vois-rep} applied to $q$ and $r_i.$ As $q(w)=w^4$ obviously there exists $l_1\geq1$ such that $\overline{R\Db\setminus R^{-1}\Db}\subset\cap_{i=1}^3q^{l_1}(V^0_i).$ Then for $l\geq l_1$ we define $U_i^l:=V_i^{l-l_1}$ and we obtain
$\overline{R\Db\setminus R^{-1}\Db}\subset q^{l_1}(V_i^0)\subset q^l(U_i^l).$ Since the diameter of $V^l_i$ converges exponentially fast to $0$ with $l,$ the same holds for $U_i^l.$

On the other hand, as $r_1,$ $r_2$ and $r_3$ are three non-aligned points in $\Cb$ such that $r_1+r_2+r_3=0,$ by Lemma \ref{le-triangle} there exist $0<\lambda<1$ and $\epsilon_0>0$ such that $\overline{\Db}\subset\cup_{i=1}^3\phi_i(\Db)$ where $\phi_i(z)=\lambda z+\epsilon_0r_i.$ Moreover, if $l_0\geq l_1$ is large enough and $l\geq l_0$ then the sets $U_i^l$ are arbitrarily small. Hence, if $l\geq l_0$ then the map $g(z,w):=(\lambda z+\epsilon_0w,q^l(w))$ is close to $(z,w)\mapsto(\phi_i(z),q^l(w))$ on $\Db\times U^l_i.$ Thus, using that $\overline{R\Db\setminus R^{-1}\Db}\subset q^l(U_i^l),$ it follows exactly as in the proof of Proposition \ref{prop-exi-rep} that 
$$\overline{\Db\times(R\Db\setminus R^{-1}\Db)}\subset g(\Db\times\cup_{i=1}^3U_i^l).$$
It also implies the general case with $\alpha\in\Cb^*$ since $g_\alpha$ is the conjugation of $g$ by $(z,w)\mapsto(\alpha\epsilon_0^{-1}z,w).$
\end{proof}
Recall that $\psi(w)=\frac{iw+1}{w+i}.$ From now on, let $R>0$ be large enough in order to get $\psi^{-1}(\{r_1,r_2,r_3\})\subset R\Db\setminus R^{-1}\Db.$ We choose $l\geq l_0$ such that $\overline{\cup_{i=1}^3(U_i^l\cup\psi^{-1}(U_i^l))}\subset R\Db\setminus R^{-1}\Db$ and for $(\alpha,\eta)\in\Cb^2$ we define
$$G_{\alpha,\eta}(z,w):=(\lambda z+\alpha w+\eta q^l(z),q^l(w)).$$
It can be seen as a rational map of $\Pb^2$ and if $\eta\neq0$ then $G_{\alpha,\eta}$ extends to an endomorphism of $\Pb^2$ which we still denote by $G_{\alpha,\eta}.$ Moreover, the map $\psi$ extends to an automorphism $\Psi[z:w:t]:=[z:iw+t:it+w]$ which acts by $\psi$ on the line $X:=\{[z:w:t]\in\Pb^2\,|\, z=0\}.$

To conclude the proof of Theorem \ref{th-att}, we need the two following results.
\begin{lemma}\label{le-att2}
There exist $\widetilde l\geq1$ and $\widetilde\lambda>1$ such that for each $\alpha\in\Cb^*$ if $\eta\in\Cb^*$ sufficiently close to $0$ then the map $F_{\eta}(z,w):=(\widetilde\lambda z+\eta q^{\widetilde l}(z),q^{\widetilde l}(w))$ satisfies
$$X\cup\overline{(\alpha\epsilon_0^{-1}\Db)\times\cup_{i=1}^3U_i^l)}\subset F_{\eta}\circ\Psi((\alpha\epsilon_0^{-1}\Db)\times(R\Db\setminus R^{-1}\Db)).$$
\end{lemma}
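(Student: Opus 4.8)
Write $S:=(\alpha\epsilon_0^{-1}\Db)\times(R\Db\setminus R^{-1}\Db)$ for the source set, $D:=4^{\widetilde l}$ and $P_\eta(\zeta):=\widetilde\lambda\zeta+\eta\zeta^{D}$, so that $q^{\widetilde l}(\zeta)=\zeta^{D}$. Since $\Psi(z,w)=\bigl(\tfrac{z}{w+i},\psi(w)\bigr)$ in the affine chart (and $\Psi(z,-i)=[z:2:0]$ on the line at infinity), one has, for $w\neq -i$,
$$F_\eta\circ\Psi(z,w)=\Bigl(P_\eta\bigl(\tfrac{z}{w+i}\bigr),\,q^{\widetilde l}(\psi(w))\Bigr),$$
and for $\eta\neq 0$ the map $F_\eta$ is a genuine endomorphism of $\Pb^2$, so this is a well-defined self-map of $\Pb^2$. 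The plan is then to prove the asserted inclusion pointwise: for each target point $(z',w')$ I will exhibit $(z,w)\in S$ with $F_\eta(\Psi(z,w))=(z',w')$.

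\textbf{Choice of $\widetilde l$ and $\widetilde\lambda$.} Because $\psi$ is a M\"obius transformation with $\psi(0)=-i$ and $\psi(\infty)=i$, the image $\Omega:=\psi(R\Db\setminus R^{-1}\Db)$ is, for $R$ large, the complement in $\Pb^1$ of two small topological discs around $-i$ and $i$; in particular $\Omega$ contains fixed closed neighbourhoods $\overline{D(0,\rho_0)}$ of $0$ and $\{|\zeta|\ge\rho_1\}\cup\{\infty\}$ of $\infty$ (with $\rho_0<1<\rho_1$), together with a fixed closed annular sector $\{\rho_0\le|\zeta|\le\rho_1,\ |\arg\zeta|\le\pi/4\}$ straddling the Julia circle of $q(\zeta)=\zeta^{4}$ and avoiding $\pm i$. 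Since $q^{\widetilde l}$ multiplies arguments by $4^{\widetilde l}\ge 4$, the image of that sector is the full round annulus $\{\rho_0^{D}\le|\zeta|\le\rho_1^{D}\}$, which dovetails with $\overline{D(0,\rho_0^{D})}$ and $\{|\zeta|\ge\rho_1^{D}\}\cup\{\infty\}$; hence $q^{\widetilde l}(K)=\Pb^1$ for \emph{all} $\widetilde l\ge 1$, where $K\Subset\Omega$ is the (fixed) union of these three pieces. Put $K_A:=\psi^{-1}(K)$, a compact subset of the open annulus $R\Db\setminus R^{-1}\Db$ with $q^{\widetilde l}(\psi(K_A))=\Pb^1$ and $M:=\sup_{w\in K_A}|w+i|<R+1$. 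Finally fix $\widetilde\lambda>R+1$ (so $\widetilde\lambda>M>1$); these choices do not depend on $\alpha$.

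\textbf{Solving for preimages.} Let $\alpha\in\Cb^*$ and set $c_1:=|\alpha|\epsilon_0^{-1}$, so the target is $X\cup\bigl(\{|z'|\le c_1\}\times\bigcup_i\overline{U_i^l}\bigr)$. Given $(z',w')$, pick $w\in K_A$ with $q^{\widetilde l}(\psi(w))=w'$ (possible since $q^{\widetilde l}(\psi(K_A))=\Pb^1$). If $w'=\infty$ — which can only occur for the point $[0:1:0]$ of $X$ — then $w=-i$, $\Psi(0,-i)=[0:2:0]$, $F_\eta[0:2:0]=[0:1:0]$, and $(0,-i)\in S$; so this point is reached. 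If $w'\in\Cb$ then $w\neq -i$, and for $|\eta|$ small (depending only on $c_1$, hence on $\alpha$) Rouch\'e's theorem applied on the circle $|\zeta|=c_1/\widetilde\lambda+\delta$ shows that $P_\eta(\zeta)=z'$ has a root $\zeta$ with $|\zeta|<c_1/\widetilde\lambda+\delta$, uniformly over $|z'|\le c_1$, for any preassigned $\delta>0$. Choosing $\delta$ so small that $(c_1/\widetilde\lambda+\delta)M<c_1$ (possible since $M<\widetilde\lambda$) and setting $z:=\zeta(w+i)$ gives $|z|\le|\zeta|M<c_1$ and $w\in K_A\subset R\Db\setminus R^{-1}\Db$, hence $(z,w)\in S$ and $F_\eta(\Psi(z,w))=(P_\eta(\zeta),w')=(z',w')$. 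This establishes the inclusion.

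\textbf{Main obstacle.} The only genuinely nontrivial step is producing the \emph{fixed} compact $K\Subset\Omega$ with $q^{\widetilde l}(K)=\Pb^1$ independently of $\widetilde l$: this rests on the fact that $\Omega$ misses $\pm i$ only in small discs, so it contains a full annular sector straddling the Julia circle (whose $q^{\widetilde l}$-image is a huge round annulus) as well as closed neighbourhoods of the superattracting points $0$ and $\infty$ large enough that their images fill the residual shells — i.e.\ the blowing-up property of Julia sets made uniform in $\widetilde l$. Everything else (the Rouch\'e estimate, and the bound $|z|=|\zeta|\,|w+i|<c_1$, which is exactly where $\widetilde\lambda>R+1$ is used) is routine.
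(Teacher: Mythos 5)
Your proof is correct, and its skeleton is the same as the paper's: the line $X$ is covered because $q^{\widetilde l}\circ\psi$ maps the annulus onto $\Pb^1$, and the product part is covered because $\widetilde\lambda$ is chosen larger than $\sup|w+i|$ over the preimages you use, so that the expansion of $F_\eta$ in $z$ beats the contraction of $\Psi$. The execution differs in two places, both in your favor in terms of explicitness. First, the paper takes $\widetilde l$ large so that $q^{\widetilde l}(\psi(R\Db\setminus R^{-1}\Db))=\Pb^1$, whereas you build one fixed compact $K$ (two caps around $0$ and $\infty$ plus an annular sector of angular width $\pi/2$ across the unit circle) with $q^{\widetilde l}(K)=\Pb^1$ for every $\widetilde l\geq1$; this is a correct sharpening, though it quietly strengthens the standing ``$R$ large'' hypothesis (the complementary discs around $\pm i$ must be small enough to admit $\rho_0<1<\rho_1$ and the sector) — the same kind of extra largeness the paper itself uses when it asserts those discs are disjoint from $\Rb$, so this is harmless. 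Second, the paper proves the inclusion only for $F_0$ and then invokes stability of the inclusion under the small perturbation $\eta\neq0$ (with $\alpha$ fixed), while you prove it directly for $\eta\neq0$ by exhibiting a preimage pointwise, solving $\widetilde\lambda\zeta+\eta\zeta^{D}=z'$ by Rouch\'e on a circle of radius slightly larger than $c_1/\widetilde\lambda$ and using $(c_1/\widetilde\lambda+\delta)M<c_1$; this makes explicit the perturbation step the paper only asserts (which otherwise needs the remark that the relevant preimages stay in a compact region away from the indeterminacy point $[1:0:0]$ of $F_0$). You also treat the point $[0:1:0]$ of $X$ via $(0,-i)\mapsto[0:2:0]\mapsto[0:1:0]$, a detail the paper leaves implicit. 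No gaps.
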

\begin{proof}
Let $\widetilde l\geq1$ be such that $q^{\widetilde l}\circ\psi(R\Db\setminus R^{-1}\Db)=\Pb^1.$ Such $\widetilde l$ exists since the complement in $\Pb^1$ of $\psi(R\Db\setminus R^{-1}\Db)$ consists of two balls containing $i$ and $-i$ and disjointed from $\Rb.$ Hence $X\subset F_{\eta}\circ\Psi((\alpha\epsilon_0^{-1}\Db)\times(R\Db\setminus R^{-1}\Db))$ for all $(\alpha,\eta)\in(\Cb^*)^2.$

For the second part of the inclusion it is sufficient to choose $\widetilde\lambda>1$ large enough in order to compensated the contraction on the $z$ coordinate due to $\Psi$ on $\Cb\times\psi^{-1}(\cup_{i=1}^3U_i^l).$ Then $\overline{(\alpha\epsilon_0^{-1}\Db)\times\cup_{i=1}^3U_i^l)}\subset F_{0}\circ\Psi((\alpha\epsilon_0^{-1}\Db)\times(R\Db\setminus R^{-1}\Db))$ for all $\alpha\in\Cb^*.$ Thus, if $\alpha\in\Cb^*$ is fixed then the same inclusion holds for small $\eta\in\Cb^*.$
\end{proof}

\begin{lemma}\label{le-att3}
There exist $\beta_1>0,$ $\rho>0$ and $N\geq1$ such that for all $(\alpha,\eta)\in(\beta_1\Db^*)^2$ the open subset of $\Pb^2$ given by $U_\rho:=\{[z:w:t]\in\Pb^2\,|\, |z|<\rho\max(|w|,|t|)\}$ is a trapping region for $F_\eta\circ\Psi\circ(G_{\alpha,\eta}^N).$
\end{lemma}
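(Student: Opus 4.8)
The plan is to work throughout in homogeneous coordinates on $\Pb^2$ and to keep track of how each of the three factors acts on the family $(U_\rho)_{\rho>0}$. Write $D:=4^{l}$ and $\widetilde D:=4^{\widetilde l}$, so $q^{l}(w)=w^{D}$, $q^{\widetilde l}(w)=w^{\widetilde D}$ with $D,\widetilde D\ge 4$; since $\eta\ne 0$, both $G_{\alpha,\eta}$ and $F_\eta$ are genuine endomorphisms of $\Pb^2$, namely
$$G_{\alpha,\eta}[z:w:t]=[\lambda zt^{D-1}+\alpha wt^{D-1}+\eta z^{D}:w^{D}:t^{D}],\qquad F_\eta[z:w:t]=[\widetilde\lambda zt^{\widetilde D-1}+\eta z^{\widetilde D}:w^{\widetilde D}:t^{\widetilde D}].$$
The key elementary estimate is the following. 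If $[z:w:t]\in U_\rho$ and $m:=\max(|w|,|t|)$, then $m>0$ (as $[1:0:0]\notin U_\rho$), $|z|<\rho m$ and $|w|,|t|\le m$, so the first homogeneous coordinate of $G_{\alpha,\eta}[z:w:t]$ has modulus at most $(\lambda\rho+|\alpha|+|\eta|\rho^{D})m^{D}$ while the new ``denominator'' $\max(|w|^{D},|t|^{D})$ equals $m^{D}$. Hence $G_{\alpha,\eta}(U_\rho)\subset U_{\lambda\rho+|\alpha|+|\eta|\rho^{D}}$, and in the same way $F_\eta(U_\rho)\subset U_{\widetilde\lambda\rho+|\eta|\rho^{\widetilde D}}$. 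The mechanism that makes this work is that the fibre degree $D\ge 2$ produces the factor $m^{D}$ in the denominator, which dominates the single factor $m$ contributed by the $z$-numerator; combined with $\lambda<1$, this makes $G_{\alpha,\eta}$ compress the cone $U_\rho$ as soon as $|\alpha|,|\eta|$ are small.

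The only subtle point is $\Psi$: in the chart $\{t=1\}$ it reads $(z,w)\mapsto\bigl(z/(w+i),\psi(w)\bigr)$, which has a pole along $\{w=-i\}$ and so seems to thicken neighbourhoods of $X$ uncontrollably. But in homogeneous coordinates $\Psi$ has a matrix with entries in $\{0,1,i\}$, and a direct computation gives $\Psi^{-1}[z:w:t]=[2z:t-iw:w-it]$. Since $|t-iw|\le|w|+|t|\le 2\max(|w|,|t|)$ and likewise $|w-it|\le 2\max(|w|,|t|)$, the condition $\Psi^{-1}[z:w:t]\in U_{\rho'}$, i.e. $|2z|<\rho'\max(|t-iw|,|w-it|)$, forces $|2z|<2\rho'\max(|w|,|t|)$, i.e. $[z:w:t]\in U_{\rho'}$. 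Therefore $\Psi(U_{\rho'})\subset U_{\rho'}$ for every $\rho'>0$, and $\Psi$ can be harmlessly absorbed into the trapping estimate.

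It then remains to assemble the three inclusions. Fix $\rho:=1/2$ and let $\Phi(x):=\lambda x+|\alpha|+|\eta|x^{D}$. If $\beta_1$ is small enough then, for all $|\alpha|,|\eta|\le\beta_1$, $\Phi$ maps $[0,\rho]$ into itself and is a $\mu$-contraction there with $\mu:=\lambda+D\beta_1<1$, with fixed point $\rho_\ast\le 2\beta_1/(1-\lambda)$ and $\Phi^{N}(\rho)\le\rho_\ast+\mu^{N}\rho$. Iterating the first inclusion, $G_{\alpha,\eta}^{N}(U_\rho)\subset U_{\rho_N}$ with $\rho_N:=\Phi^{N}(\rho)$, and hence
$$\bigl(F_\eta\circ\Psi\circ G_{\alpha,\eta}^{N}\bigr)(U_\rho)\ \subset\ F_\eta\bigl(\Psi(U_{\rho_N})\bigr)\ \subset\ F_\eta(U_{\rho_N})\ \subset\ U_{\rho''},\qquad \rho'':=\widetilde\lambda\rho_N+|\eta|\rho_N^{\widetilde D}.$$
Shrinking $\beta_1$ further so that $2\widetilde\lambda\beta_1/(1-\lambda)+\beta_1<\rho/2$, and then taking $N$ so large that $\widetilde\lambda\mu^{N}\rho<\rho/2$ — choices depending only on $\lambda,\widetilde\lambda,D$, hence uniform over $(\alpha,\eta)\in(\beta_1\Db^*)^2$ — one obtains $\rho''\le\widetilde\lambda\rho_\ast+\widetilde\lambda\mu^{N}\rho+|\eta|<\rho<1$. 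Since $\rho''<\rho$, $\overline{U_{\rho''}}=\{\,|z|\le\rho''\max(|w|,|t|)\,\}\subset U_\rho$, so $\overline{(F_\eta\circ\Psi\circ G_{\alpha,\eta}^{N})(U_\rho)}\subset U_\rho$, i.e. $U_\rho$ is a trapping region. I expect the $\Psi$-step to be the only genuine obstacle; everything else is a routine contraction/fixed-point estimate carried out in homogeneous coordinates.
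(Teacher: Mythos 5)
Your proof is correct, and the estimates check out: the homogenizations of $G_{\alpha,\eta}$ and $F_\eta$ are right, the inclusion $G_{\alpha,\eta}(U_\rho)\subset U_{\lambda\rho+|\alpha|+|\eta|\rho^{D}}$ (and its analogue for $F_\eta$) follows exactly as you say because the fibre degree produces $m^{D}$ in the denominator, the formula $\Psi^{-1}[z:w:t]=[2z:t-iw:w-it]$ is correct, and hence $\Psi(U_{\rho'})\subset U_{\rho'}$ for every $\rho'>0$; the fixed-point/contraction bookkeeping with $\Phi(x)=\lambda x+|\alpha|+|\eta|x^{D}$ then yields $\rho''<\rho$ with constants uniform in $(\alpha,\eta)\in(\beta_1\Db)^2$, and $\overline{U_{\rho''}}\subset U_\rho$ gives the trapping property. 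The route is, however, different in execution from the paper's. The paper argues softly: it works at the degenerate parameter $(\alpha,\eta)=(0,0)$, where $G_{0,0}(z,w)=(\lambda z,q^l(w))$ visibly satisfies $\overline{G_{0,0}(U_\rho)}\subset U_\rho$ with $\bigcap_N G_{0,0}^N(U_\rho)=X$, notes that $\overline{F_0\circ\Psi(U_{\rho'})}\subset U_\rho$ for some $\rho$, chooses $N$ so that $G_{0,0}^N(U_\rho)\subset U_{\rho'}$, and then invokes the elementary fact (stated at the start of Section 6) that the strict trapping inclusion persists under small perturbations of the rational map, which covers all small $(\alpha,\eta)$ at once. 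You instead prove the uniform statement directly by explicit inequalities in homogeneous coordinates, the nicest point being the exact invariance $\Psi(U_{\rho'})\subset U_{\rho'}$ (equivalently, the linear action of $\Psi$ on the $(w,t)$-coordinates does not decrease $\max(|w|,|t|)$), which the paper only handles implicitly through the compactness statement about $F_0\circ\Psi(U_{\rho'})$. What your version buys is explicit, quantified constants $\beta_1,\rho,N$ and no appeal to perturbation stability or to the degenerate maps $G_{0,0},F_0$ (whose indeterminacy point $[1:0:0]$ one must otherwise note lies outside $\overline{U_\rho}$); what the paper's version buys is brevity and reuse of a stability principle it needs elsewhere anyway.
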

\begin{proof}
Observe that for all $\rho>0,$ $\overline{G_{0,0}(U_\rho)}\subset U_\rho$ and $\bigcap_{N\geq0}G^N_{0,0}(U_\rho)=X.$ Moreover, if $\rho'>0$ then $\overline{F_0\circ\Psi(U_{\rho'})}$ is contained in $U_\rho$ for some $\rho>0.$ Hence, there exists $N\geq1$ such that $G^N_{0,0}(U_\rho)\subset U_{\rho'}$ and thus
$$\overline{F_0\circ\Psi\circ(G_{0,0}^N)(U_\rho)}\subset\overline{F_0\circ\Psi(U_{\rho'})}\subset U_\rho,$$
i.e. $U_\rho$ is a trapping region for $F_0\circ\Psi\circ G^N_{0,0}.$ The result follows since this property is stable under small perturbations.
\end{proof}

\begin{proof}[Proof of Theorem \ref{th-att}]
Using the notations of the above lemmas define $Z_\alpha:=(\alpha\epsilon_0^{-1}\Db)\times\cup_{i=1}^3U_i^l$ and let $\alpha\in\beta_1\Db^*$ be small enough such that $Z_\alpha\subset U_\rho.$ By Lemma \ref{le-att1} and Lemma \ref{le-att2}, if $\eta\in\beta_1\Db^*$ is close enough to $0$ then  $X\cup Z_\alpha\subset F_\eta\circ\Psi((\alpha\epsilon_0^{-1}\Db)\times(R\Db\setminus R^{-1}\Db))$ and $(\alpha\epsilon_0^{-1}\Db)\times(R\Db\setminus R^{-1}\Db)\subset G_{\alpha,\eta}(Z_\alpha)$ which implies $Z_\alpha\subset G_{\alpha,\eta}(Z_\alpha).$ Therefore, $Z_\alpha\subset G_{\alpha,\eta}^{N-1}(Z_\alpha)$ and thus if $f:=F_\eta\circ\Psi\circ G^N_{\alpha,\eta}$ then $X\cup Z_\alpha\subset f(Z_\alpha).$ Moreover, since $(\alpha,\eta)\in(\beta_1\Db^*)^2$ the set $U_\rho$ is a trapping region for $f$ which contains $Z_\alpha.$ Hence, $A:=\bigcap_{n\geq0}f^n(U_\rho)$ contains $Z_\alpha$ thus $f(Z_\alpha)$ which is an open set which itself contains the line $X.$

It remains to show that $A$ is an attractor. But, obverse that both the maps $G_{\alpha,\eta}$ and $F_\eta\circ\Psi$ preserve the pencil $\Pc$ of lines passing through the point $[1:0:0].$ With the standard identification between $\Pc$ and $\Pb^1,$ their actions on $\Pc$ are given by $q^l$ and $q^{\widetilde l}\circ\psi$ respectively. Hence, the one of $f$ is given by $q^{\widetilde l}\circ\psi\circ q^{lN}$ which is a transitive rational map on $\Pb^1.$ From this, it follows as in the proof of \cite[Lemma 4]{jonsson-weickert-attractor} (see also \cite{fs-example}) that $f_{|A}$ is topologically mixing and thus transitive.
\end{proof}
\begin{remark}
To such an attracting set Dinh \cite{d-attractor} (see also \cite{t-attractor}) associates an attracting current and an equilibrium measure. Observe that since in all the examples above a pencil of line is preserved, the support of the attracting current is exactly equal to the corresponding attracting set. This contrasts with the examples from the Section \ref{sec-henon}. Another observation is that, in an obvious way using product maps, the bifurcations in $\Pc_d\times\Pc_d$ can be detected by the smallest Lyapunov exponents of the equilibrium measures associated to attracting sets.
\end{remark}
\section{Cycles of blenders}\label{sec-cycle}
The goal of this section is to explain how one can perturb product maps of the form $(z,w)\mapsto(z,q(w))$ in order to obtain a \textit{cycle} between a blender of repelling type $\Lambda_r$ and one of saddle type $\Lambda_s.$ The fact that two hyperbolic sets form a cycle simply means that the unstable manifold of one set intersects the stable manifold of the other one and vice versa. We will see that in our setting it will be sufficient to prove that the unstable manifold of $\Lambda_s$ intersects $\Lambda_r.$ The proof of Theorem \ref{th-cycle} will then follow easily form this construction using classical results in hyperbolic and complex dynamics.

We are able to fulfill the construction below in a more general setting, but not as general as the one of Theorem \ref{th-main}. So, for simplicity we restrict ourselves to the following context. We first define $q(w):=w^7,$ $r_1:=1,$ $r_2:=e^{2\pi/3},$ $r_3:=e^{4\pi/3}$ and for $i\in\{1,2,3\},$ $s_i:=-r_i.$ Observe that these six points are repelling fixed points of $q$ and $r_1+r_2+r_3=s_1+s_2+s_3=0.$ We denote by $H_i$ the sets defined in Lemma \ref{le-triangle} with $c_i=r_i$ and $V_i^l$ (resp. $U_i^l$) the open sets associated to $r_1,$ $r_2$ and $r_3$ (resp. $s_1,$ $s_2$ and $s_3$) by Lemma \ref{le-vois-rep} with $m_1=1.$ Observe that we can easily require in addition that $\overline{\cup_{i=1}^3V_i^l}\subset q^l(U_j^l)$ for all $j\in\{1,2,3\}.$ The idea of the following proposition is to combine Proposition \ref{prop-exi-rep} and Proposition \ref{prop-exi-selle} using the fact that the polynomial $(w^3+1)/2$ equals $1$ for $w=r_i$ and vanishes for $w=s_i.$ The condition $1/10>\alpha_1>\alpha_2>0$ will simplify the end of the proof of Theorem \ref{th-cycle-bis}.
\begin{proposition}\label{prop-double-blender}
There exist $1/10>\alpha_1>\alpha_2>0,$ $\epsilon_1,\epsilon_2, R>0$ and $l_0\geq1$ such that for all $l\geq l_0$ the map
$$F_l(z,w):=(z+(\alpha_1z+\epsilon_1w)(w^3+1)/2-\alpha_2z-\epsilon_2w,q^l(w))$$
has a blender of repelling type $\Lambda^l_r$ in $\Db\times\cup_{i=1}^3V^l_i$, a blender of saddle type $\Lambda^l_s$ in $4^{-1}\Db\times\cup_{i=1}^3U^l_i$ and for each $\widehat x$ in the natural extension of $\Lambda^l_s$ the unstable manifold $W^u_{\widehat x}$ intersects $\Lambda^l_r.$ Moreover, the sets
$$\widetilde\Lambda^l_r:=\bigcap_{n\in\Zb}F_l^n\left(R\Db\times\cup_{i=1}^3V_i^l\right)\ \text{ and }\ \widetilde\Lambda^l_s:=\bigcap_{n\in\Zb}F_l^n\left(R\Db\times\cup_{i=1}^3U_i^l\right)$$
are basic sets of repelling type and saddle type respectively with $\Lambda_r^l\subset\widetilde\Lambda_r^l,$ $\Lambda_s^l\subset\widetilde\Lambda_s^l,$ which are topologically mixing on their natural extensions with topological entropy equals to $\log3.$
\end{proposition}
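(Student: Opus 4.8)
The plan is to choose the six parameters so that, near the two triangles $\{r_1,r_2,r_3\}$ and $\{s_1,s_2,s_3\}$, the map $F_l$ coincides, up to an error controlled by the (exponentially small) diameters of $V_i^l$ and $U_i^l$, with the two model maps of Proposition~\ref{prop-exi-rep} and Proposition~\ref{prop-exi-selle}. Since $r_i^3=1$, on $2\Db\times V_j^l$ the first coordinate of $F_l$ equals $(1+\alpha_1-\alpha_2)z+(\epsilon_1-\epsilon_2)w$ plus an $O(\mathrm{Diam}\,V_j^l)$ error; since $s_i^3=-1$, on $2\Db\times U_j^l$ it equals $(1-\alpha_2)z-\epsilon_2 w$ plus an $O(\mathrm{Diam}\,U_j^l)$ error. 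Let $\alpha_0,\epsilon_0>0$ be the small constants of Lemma~\ref{le-triangle} for the triangle $c_i=r_i$, taken with $\alpha_0<1/20$, and let $\delta_0,l_0$ be the constants of Proposition~\ref{prop-exi-rep} and Proposition~\ref{prop-exi-selle}. Put $\alpha_1:=2\alpha_0$, $\alpha_2:=\alpha_0$ (so $1/10>\alpha_1>\alpha_2>0$), $\epsilon_2:=\epsilon_0/4$, $\epsilon_1:=\epsilon_0+\epsilon_2$. Then the repelling model has multiplier $1+\alpha_0$ and translation term $\epsilon_0 r_j$, while, after conjugating by $(z,w)\mapsto(4z,w)$, the saddle model has multiplier $1-\alpha_0$ and translation term $\epsilon_0 r_j$; for $l$ large the errors above are $\le\delta_0$, so hypothesis i) of both propositions holds. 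Hypothesis ii) is immediate from the block form of $DF_l$: $|\partial_z h|$ stays close to $1\pm\alpha_0$ and $|\partial_w h|$ is bounded uniformly in $l$, whereas $|(q^l)'(w)|$ grows exponentially on the blocks (Lemma~\ref{le-vois-rep}), so $F_l$ contracts the cone field $C_{\delta_0}$ both on $\overline{\cup_j\Db\times V_j^l}$ and on $\overline{\cup_j\Db\times U_j^l}$. Proposition~\ref{prop-exi-rep} then produces the repelling blender $\Lambda_r^l=\bigcap_{n\ge0}F_l^{-n}(\overline{\cup_j H_j\times V_j^l})$ inside $\Db\times\cup_i V_i^l$, and Proposition~\ref{prop-exi-selle}, applied to the conjugated map, produces the saddle blender $\Lambda_s^l$ inside $4^{-1}\Db\times\cup_i U_i^l$.

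For the basic sets one enlarges $R$ so that $\alpha_0 R>\epsilon_0+\delta_0$. On $R\Db\times V_j^l$, for $l$ large, $F_l$ is a $\delta_0$-perturbation of $(z,w)\mapsto((1+\alpha_0)z+\epsilon_0 r_j,q^l(w))$; using $q^l(V_j^l)\supset\overline{\cup_i V_i^l}$ and a Rouch\'e argument as in the proof of Proposition~\ref{prop-exi-rep}, one gets $F_l(R\Db\times V_j^l)\supset R\Db\times V_i^l$ for all pairs $(i,j)$, so $\widetilde\Lambda_r^l=\bigcap_{n\in\Zb}F_l^n(R\Db\times\cup_i V_i^l)$ is a repelling basic set carrying the full Markov structure on three symbols; hence its natural extension is the two-sided full shift on three symbols, which is topologically mixing with entropy $\log 3$. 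On $R\Db\times U_j^l$, for $l$ large, $F_l$ is a $\delta_0$-perturbation of $(z,w)\mapsto((1-\alpha_0)z+\epsilon_2 r_j,q^l(w))$, which contracts the first coordinate and expands $w$ via $q^l$ with $q^l(U_j^l)\supset\overline{\cup_i U_i^l}$; this gives a dominated splitting on $R\Db\times\cup_i U_i^l$ and shows $\widetilde\Lambda_s^l=\bigcap_{n\in\Zb}F_l^n(R\Db\times\cup_i U_i^l)$ is a saddle basic set conjugate to the same full shift. The inclusions $\Lambda_r^l\subset\widetilde\Lambda_r^l$ and $\Lambda_s^l\subset\widetilde\Lambda_s^l$ follow because the forward orbits of their points stay in the relevant region, while the covering property supplies a backward orbit that stays in $R\Db\times(\cdots)$.

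The crucial point is the cycle. Fix $\widehat x$ in the natural extension of $\Lambda_s^l$ and set $x_{-1}:=\pi(\widehat f^{-1}\widehat x)\in\Lambda_s^l$, with second coordinate in some $U_{j_{-1}}^l$. Because the $w$-dynamics of $F_l$ is $q^l$ and, near the $U$-region, the first coordinate of $F_l$ is a $w$-independent contraction up to a small error, the local unstable manifold $W^u_{\mathrm{loc}}(\widehat f^{-1}\widehat x)$ is a vertical graph over the whole block $U_{j_{-1}}^l$, tangent to $C_{\delta_0}$, with first coordinate in a tiny neighborhood of $4^{-1}\Db$. Applying $F_l$ once and using the extra requirement $\overline{\cup_i V_i^l}\subset q^l(U_{j_{-1}}^l)$ together with the cone contraction of $F_l$, the image $F_l(W^u_{\mathrm{loc}}(\widehat f^{-1}\widehat x))\subset W^u_{\widehat x}$ contains, over each $V_i^l$, a vertical graph $\Gamma$ tangent to $C_{\delta_0}$; since $(w^3+1)/2$ is $O(\mathrm{Diam}\,U_{j_{-1}}^l)$ at the transition, the first coordinate along $\Gamma$ is bounded by $(1-\alpha_0)/4+\epsilon_2+o(1)<1/2$, so $\Gamma\subset H_i\times V_i^l$ (recall $2^{-1}\Db\subset\cap_j H_j$). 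Thus $\Gamma$ is an admissible vertical graph for Proposition~\ref{prop-inter-robuste} applied to $F_l$ itself, so $\Lambda_r^l=\bigcap_{n\ge0}F_l^{-n}(\overline{\cup_j H_j\times V_j^l})$ meets $\Gamma\subset W^u_{\widehat x}$, which is the asserted intersection.

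The routine parts are the model-matching computations of the first two paragraphs and the symbolic bookkeeping for the basic sets. The real obstacle is the last step: one must establish the uniform geometric facts that $W^u_{\mathrm{loc}}$ of a point of $\Lambda_s^l$ genuinely spans the full vertical block $U_{j_{-1}}^l$, remains $C_{\delta_0}$-flat, and has first coordinate close enough to $4^{-1}\Db$ that a single application of $F_l$ already yields a graph crossing each $V_i^l$ while staying inside $\cup_j H_j\times V_j^l$. These estimates, rather than the combinatorics, are where the work is concentrated.
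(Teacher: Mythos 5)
Your proposal is correct and follows essentially the same route as the paper: exploit that $(w^3+1)/2\approx 1$ on the $V_i^l$ and $\approx 0$ on the $U_i^l$ to match the models of Proposition \ref{prop-exi-rep} and (after rescaling by $4$) Proposition \ref{prop-exi-selle}, obtain the basic sets from the full three-symbol Markov covering, and get the cycle by pushing the local unstable graph of a point of $\Lambda^l_s$ forward one iterate into some $H_j\times V_j^l$ tangent to the cone field and invoking Proposition \ref{prop-inter-robuste}. The only difference is cosmetic bookkeeping: you fix explicit relations among $\alpha_1,\alpha_2,\epsilon_1,\epsilon_2$ so both models match exactly, whereas the paper first builds the repelling blender for $h_1$ and absorbs $-\alpha_2 z-\epsilon_2 w$ as an arbitrarily small perturbation.
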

\begin{proof}
By Lemma \ref{le-triangle} and Proposition \ref{prop-exi-rep}, there exist $1/10>\alpha_1>0$ $,\epsilon_1,\delta_1>0$ and $l_1\geq1$ such that for all $l\geq l_1$ a map of the form
$$(z,w)\mapsto(h_1(z,w),q^l(w))$$
with $|h_1(z,w)-((1+\alpha_1)z+\epsilon_1r_i)|\leq\delta_1$ on $2\Db\times V_i^l$ has a blender of repelling type in $\Db\times\cup_{i=1}^3V^l_i.$  In particular, this holds for $h_1(z,w)=z+(\alpha_1z+\epsilon_1w)(w^3+1)/2$ for $l\geq l_1$ large enough since $w$ is close to $r_i$ on $V_i^l$ and thus $(w^3+1)/2$ is close to $1.$ Notice that this still holds for perturbations of $h_1$ of uniform size with respect to $l\geq l_0.$

We now explain why one can perturb such a map in order to obtain in addition a blender of saddle type. For what follows, we need that this blender is contained in a smaller set of the form $4^{-1}\Db\times\cup_{i=1}^3U_i^l.$ Observe that by a simple change of variables, the blender obtained by Proposition \ref{prop-exi-selle} can be chosen to be in $r\Db\times\cup_{i=1}^3U_i^l$ with $r>0$ arbitrarily small. Hence, by Lemma \ref{le-triangle} and Proposition \ref{prop-exi-selle} there exist $\alpha_2,\epsilon_2>0$ arbitrarily small, $\delta_2>0$ and $l_2\geq1$ such that for all $l\geq l_2$ a map of the form
$$(z,w)\mapsto(h_2(z,w),q^l(w))$$
with $|h_2(z,w)-((1-\alpha_2)z-\epsilon_2s_i)|\leq\delta_2$ on $2^{-1}\Db\times U_i^l$ has a blender of saddle type in $4^{-1}\Db\times\cup_{i=1}^3U_i^l.$ Since $w^3+1$ is close to $0$ on $U^l_i$ for $l\geq l_2$ large enough, this inequality holds for $h_2(z,w)=(z+(\alpha_1z+\epsilon_1w)(w^3+1)/2-\alpha_2z-\epsilon_2w).$ Moreover, as $\alpha_2, \epsilon_2>0$ can be chosen arbitrarily small, $h_2$ is an arbitrarily small perturbation of the function $h_1$ defined above. Hence, there exist $l_3\geq1$ and $\alpha_2, \epsilon_2>0$ such that for all $l\geq l_3$ the map $F^l(z,w):=(h_2(z,w),q^l(w))$ has two blenders $\Lambda^l_r$ and $\Lambda_s^l$ of repelling type and of saddle type respectively given by
$$\Lambda_r^l:=\bigcap_{n\geq0}F_l^{-n}\left(\overline{\cup_{i=1}^3H_i\times V_i^l}\right)\ \text{ and }\ \Lambda_s^l:=\bigcap_{n\geq0}F_l^{-n}\left(\overline{\cup_{i=1}^34^{-1}\Db\times U_i^l}\right).$$
If $R>0$ is large enough, it is easy to see that $\overline{R\Db}\subset h_2(R\Db\times\cup_{i=1}^3V^l_i),$ $h_2(\overline{R\Db\times\cup_{i=1}^3U^l_i})\subset R\Db,$ and that the sets $\widetilde\Lambda^l_r$ and $\widetilde\Lambda_s^l$ are hyperbolic, of repelling and saddle type respectively, and containing $\Lambda^l_r$ and $\Lambda_s^l$ respectively. Moreover, the dynamics on their natural extensions are topologically mixing with entropy $\log3$ since in both cases they are conjugated to the natural extension of $q^l$ on $\Lambda^l:=\cap_{n\geq0}q^{-nl}(\cup_{i=1}^3V_i^l)$ which is a two-sided full shift of three symbols.

It remains to prove the last point i.e. for each $\widehat x$ in the natural extension of $\Lambda_s^l$ we have $W^u_{\widehat x}\cap\Lambda^l_r\neq\varnothing.$ Recall that by Proposition \ref{prop-inter-robuste}, if a vertical graph in $H_i\times V^l_i$ is tangent to the cone field $C_{\delta_1}$ then it intersects $\Lambda^l_r.$ Since $q$ is uniformly expanding on the annulus $A:=\{w\in\Cb\,|\, 1/2<|w|<2\}$ for each $0<\delta<\delta_1$ there is $l\geq l_3$ such that $F_l$ contracts $C_{\delta}$ on $2\Db\times A.$ Hence, the local unstable manifold $W^u_{loc}(\widehat x)$ in $(4^{-1}\Db)\times U_i^l$ of a point $\widehat x=(x_i)_{i\leq0}\in\widehat\Lambda_s^l$ with $x_0\in\Lambda^l_s\cap (4^{-1}\Db)\times U_i^l$ is a vertical graph over $U_i^l$ which is tangent to $C_\delta.$ If $\delta>0$ is small enough, its image by $F_l$ contains a vertical graph in $H_j\times V^l_j$ for some $j\in\{1,2,3\}$ since we have assumed that $\overline{\cup_{j=1}^3V^l_j}\subset q^l(U_j^l).$ Here, we use that $\overline{4^{-1}\Db}\subset2^{-1}\Db\subset H_j$ and that $\delta<\min\{1/8\pi,\delta_1\}.$ Moreover, this graph is tangent to $C_\delta$ thus to $C_{\delta_1}$ and therefore by Proposition \ref{prop-inter-robuste}, $F_l(W^u_{loc}(\widehat x))\cap\Lambda_r^l\neq\varnothing,$ i.e. $W_{\widehat F_l(\widehat x)}^u\cap\Lambda_r^l\neq\varnothing.$ The result follows since $\widehat F_l(\widehat\Lambda_s^l)=\widehat\Lambda_s^l.$
\end{proof}

An important point about the above proof is that it is robust by small perturbations. Hence, if we fix $\alpha_1,\alpha_2,\epsilon_1,\epsilon_2,R>0$ and $l\geq l_0$ as above then there exists $\eta_0>0$ such that an endormophism $f$ of $\Pb^2$ with $\|F_l-f\|_{\infty,2R\Db\times2\Db}<\eta_0(2R)^{7l}$ has two blenders $\Lambda_r(f),$ $\Lambda_s(f)$ and two basic sets $\widetilde\Lambda_r(f),$ $\widetilde\Lambda_s(f)$ with the same properties than the ones state in Proposition \ref{prop-double-blender}. Moreover, by taking $\eta_0>0$ sufficiently small, Theorem \ref{th-motion} implies that $\widetilde\Lambda_r(f)$ moves holomorphically with $f.$ In particular, this applies to maps in a small neighborhood in $\Hc_d(\Pb^2)$ of the family of polynomial skew products $(f_\lambda)_{\lambda\in\Db^*}$ given by
$$f_\lambda(z,w):=(z+(\alpha_1z+\epsilon_1w)(w^3+1)/2-\alpha_2z-\epsilon_2w+\eta_0q^l(\lambda z),q^l(w)).$$
The following result is a reformulation of Theorem \ref{th-cycle}.
\begin{theorem}\label{th-cycle-bis}
If $\Omega\subset\Hc_d(\Pb^2)$ is a sufficiently small connected neighborhood of the family $(f_\lambda)_{\lambda\in\Db^*}$ then $\Omega\subset\bif(\Hc_d(\Pb^2))$ and for each $f\in\Omega$ the set $W_{\widetilde\Lambda_s(f)}^u$ is a Zariski open set, for each $\widehat x=(x_i)_{i\leq0}$ in the natural extension of $\widetilde\Lambda_s(f)$, $\overline{W^u_{\widehat x}}=\Pb^2$ and $W^s_{x_0}\subset\Jc_2(f).$
\end{theorem}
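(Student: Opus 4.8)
The plan is to deduce the four assertions from the robust data furnished by Proposition~\ref{prop-double-blender}, using only soft facts about small Julia sets and about hyperbolic basic sets. First, for $\Omega$ small enough every $f\in\Omega$ carries the blenders $\Lambda_r(f)\subset\widetilde\Lambda_r(f)$ and $\Lambda_s(f)\subset\widetilde\Lambda_s(f)$ of Proposition~\ref{prop-double-blender}: $\widetilde\Lambda_s(f)$ is a saddle basic set, topologically mixing with entropy $\log3$ on its natural extension; $\widetilde\Lambda_r(f)$ is a repelling basic set moving holomorphically with $f$; $W^u_{\widehat x}\cap\Lambda_r(f)\neq\varnothing$ for every $\widehat x$ in the natural extension of $\widetilde\Lambda_s(f)$; and $f$ contracts the vertical cone field $C_\delta$ near $\widetilde\Lambda_r(f)$. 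This already yields the positive--entropy statement. Next I place the repelling blender in the small Julia set: for the skew products $f_\lambda$, $\lambda\in\Db^*$, the periodic points of $\widetilde\Lambda_r(f_\lambda)$ project to repelling periodic points of $q^l=w^{7^l}$, which lie on $\Jc(q)=\partial\Db$, and they are also expanding in the vertical direction, hence are repelling periodic points of the skew product and therefore belong to $\Jc_2(f_\lambda)$ by \cite{jonsson-skew}; since $\widetilde\Lambda_r$ is a repeller moving holomorphically, \cite[Lemma~2.3]{dujardin-bif} (used already in the proof of Proposition~\ref{prop-ouvert-de-bif}) gives $\widetilde\Lambda_r(f)\subset\Jc_2(f)\subset\Jc_1(f)$ for all $f$ in a possibly smaller $\Omega$.

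For the unstable--side statements I use that a repelling basic set contained in $\Jc_1(f)$ has a large unstable set: the forward iterates of a small bidisk $B$ around a repelling periodic point $p\in\widetilde\Lambda_r(f)$ cover $\Pb^2\setminus\Ec(f)$ (iterates of a neighbourhood of a $\Jc_1$--point fill $\Pb^2$ outside the exceptional set), and since $\widetilde\Lambda_r(f)$ is a bounded set avoiding the exceptional components this gives $W^u_{\widetilde\Lambda_r(f)}=\Pb^2\setminus\Ec(f)$, Zariski open and dense. Now fix $\widehat x$ in the natural extension of $\widetilde\Lambda_s(f)$ and $y\in W^u_{\widehat x}\cap\Lambda_r(f)$: near $\widetilde\Lambda_r(f)$ the curve $W^u_{\widehat x}$ is a vertical graph tangent to $C_\delta$ through $y$; iterating it forward, using the mixing of $f$ on $\widetilde\Lambda_r(f)$, the fact that $\widetilde\Lambda_r(f)$ is a repeller (so a curve meeting it expands in all directions and, by the inclination lemma, its forward iterates accumulate on a full local unstable bidisk) and --- crucially --- the fact that $\Lambda_r(f)$ is a \emph{blender}, so that this spreading takes place inside $W^u_{\widetilde\Lambda_s(f)}$ rather than in its closure only, one finds that $W^u_{\widetilde\Lambda_s(f)}$ contains an open bidisk around a point of $\widetilde\Lambda_r(f)$; forward invariance of $W^u_{\widetilde\Lambda_s(f)}$ then forces $W^u_{\widetilde\Lambda_s(f)}=\Pb^2\setminus\Ec(f)$, which is Zariski open. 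Density of each individual $W^u_{\widehat x}$ follows because, on a topologically mixing basic set, $\overline{W^u_{\widehat x}}$ does not depend on $\widehat x$ (a standard consequence of the inclination lemma and the local product structure), so it equals $\overline{W^u_{\widetilde\Lambda_s(f)}}=\Pb^2$.

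For the Julia--set statement about $\widetilde\Lambda_s(f)$ and its stable manifolds, first take a periodic point $p\in\widetilde\Lambda_s(f)$: $W^u_{\mathrm{loc}}(\widehat p)$ meets $\Lambda_r(f)\subset\Jc_2(f)$ after finitely many forward iterates, hence already meets $\Jc_2(f)$ since $\Jc_2(f)$ is backward invariant; pulling back along the contracting inverse branch determined by $\widehat p$ and using that $\Jc_2(f)$ is closed and backward invariant forces $p\in\Jc_2(f)$, whence $\widetilde\Lambda_s(f)=\overline{\{\text{periodic points}\}}\subset\Jc_2(f)$. For the stable manifolds one writes $W^s_{x_0}=\bigcup_{n\geq0}f^{-n}\big(W^s_{\mathrm{loc}}(f^n x_0)\big)$, so by backward invariance of $\Jc_2(f)$ it suffices to show $W^s_{\mathrm{loc}}(x)\subset\Jc_2(f)$ for $x\in\widetilde\Lambda_s(f)$; this is the delicate point and is obtained by producing, along the heterodimensional cycle (orbits that shadow $\widetilde\Lambda_s(f)$, jump to $\widetilde\Lambda_r(f)$ along the connection, and come back), repelling periodic points of $f$ that lie in $\Jc_2(f)$ and accumulate on $W^s_{\mathrm{loc}}(x)$, whence $W^s_{\mathrm{loc}}(x)\subset\overline{\Jc_2(f)}=\Jc_2(f)$.

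Finally, $\Omega\subset\bif(\Hc_d(\Pb^2))$ follows from Proposition~\ref{prop-ouvert-de-bif} applied to $\widetilde\Lambda_r(f)$ (a repelling basic set contained in $\Jc_2(f)$ moving holomorphically on $\Omega$): one needs $P(f)$ to intersect $\widetilde\Lambda_r(f)$ properly for every $f\in\Omega$, and, exactly as in the proof of Corollary~\ref{coro-bif}, a one--parameter family inside $\Omega$ together with Proposition~\ref{prop-bbd} produces a parameter for which a forward image of a critical component of $f$ becomes, after passing through the strongly $w$--expanding region, a vertical graph tangent to $C_\delta$ in the box $\bigcup_{i}H_i\times V_i^l$, hence meets $\Lambda_r(f)$ by Proposition~\ref{prop-inter-robuste}; this persists robustly, and properness is guaranteed since the critical set moves with $f$ while $\widetilde\Lambda_r(f)$ only follows a holomorphic motion. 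The main obstacle is precisely the passage from the single intersection $W^u_{\widehat x}\cap\Lambda_r(f)\neq\varnothing$ to the two--dimensional statements ($W^u_{\widetilde\Lambda_s(f)}$ Zariski open, $W^s_{x_0}\subset\Jc_2(f)$): it is here that one must use that $\Lambda_r(f)$ is a blender sitting inside a repeller, not merely a hyperbolic set.
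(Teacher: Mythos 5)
Your architecture matches the paper only on the soft parts (placing $\widetilde\Lambda_r(f)$ in $\Jc_2(f)$, and deducing density of each $W^u_{\widehat x}$ from topological mixing), but the three hard steps all have genuine gaps. First, the claim that $W^u_{\widetilde\Lambda_s(f)}$ contains an open bidisk by iterating a single curve $W^u_{\widehat x}$ forward along the repeller cannot work as stated: forward images of $W^u_{\widehat x}$ stay inside the unstable manifolds of the shifted points, so you only produce a countable union of one-dimensional curves, which has empty interior; the inclination lemma gives accumulation (a statement about closures), and the blender property of $\Lambda_r(f)$ --- which concerns intersections of vertical graphs with $\Lambda_r(f)$ --- does not convert accumulation into containment. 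The mechanism you are missing is the one attached to the \emph{saddle} blender box: for $Z=\cup_{i=1}^3 4^{-1}\Db\times U_i^l$ one has $\overline Z\subset f(Z)$, and local maximality of $\widetilde\Lambda_s(f)$ gives $Z\subset W^u_{\widetilde\Lambda_s(f)}$ (\cite[Corollary 2.6]{jonsson-phd}, as noted after Definition \ref{def-blen-selle}); since $Z$ meets $\Jc_2(f)$, \cite{ds-allupoly} yields $\Pb^2\setminus\Ec(f)\subset\cup_{n\geq0}f^n(Z)\subset W^u_{\widetilde\Lambda_s(f)}$. Likewise, your treatment of $W^s_{x_0}\subset\Jc_2(f)$ leaves precisely the delicate point unproved: you assert that the cycle produces repelling periodic points of $f$ accumulating on $W^s_{\mathrm{loc}}(x)$, but shadowing operates inside a single locally maximal hyperbolic set, not across a cycle between sets of different index, and you give no mechanism to close such orbits. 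The paper avoids periodic points on the stable side altogether: for any small disc $\sigma$ transverse to $W^s_{x_0}$, shadowing inside $\widetilde\Lambda_s(f)$ plus the inclination lemma show that some $f^{n_1}(\sigma)$ contains a vertical graph tangent to $C_{\delta_1}$ in the blender box, hence meets $\Lambda_r(f)\subset\Jc_2(f)$ by Proposition \ref{prop-inter-robuste}, and total invariance plus closedness of $\Jc_2(f)$ give $W^s_{x_0}\subset\Jc_2(f)$.

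The bifurcation step is also insufficient. Applying Proposition \ref{prop-bbd} as in Corollary \ref{coro-bif} requires the chosen one-parameter family to bifurcate somewhere and, even then, only produces \emph{special} parameters at which $P(f)$ meets $\Lambda_r(f)$; robustness of the blender intersection then gives an open subset of $\Omega$ inside $\bif(\Hc_d(\Pb^2))$, not the inclusion $\Omega\subset\bif(\Hc_d(\Pb^2))$ claimed in the theorem. The paper obtains the intersection at \emph{every} $\lambda\in\Db^*$ by a different argument: since $f_\lambda$ is a skew product, the stable manifold of a periodic point of $\widetilde\Lambda_s(f_\lambda)$ is an attracting basin inside a vertical fiber, so by Fatou's theorem it contains a critical point; choosing the periodic point so that this intersection with $C(f_\lambda)$ is transverse, the inclination lemma forces $P(f_\lambda)$ to meet $\Lambda_r(f_\lambda)$ robustly, and this covers all of a small connected $\Omega$. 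Finally, your one-line justification of properness (``the critical set moves with $f$ while $\widetilde\Lambda_r(f)$ only follows a holomorphic motion'') is not a proof: a persistent intersection is a priori possible, and indeed occurs in the stable case of Lemma \ref{le-point-rep-pc}; the paper excludes it here by conjugating by $\phi_\lambda(z,w)=(\lambda z,w)$ and checking, by an explicit computation on the limit map $g_0$ as $\lambda\to0$, that its postcritical set misses the region containing all limit values of $\widetilde\Lambda_r(g_\lambda)$.
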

\begin{proof}
As the repelling periodic points are dense in $\widetilde\Lambda_r(f)$ and that for a skew product of $\Cb^2$ the closure of the repelling periodic points equals $\Jc_2(f),$ we have $\widetilde\Lambda_r(f)\subset\Jc_2(f)$ for all $f\in\Omega$ if $\Omega$ is a sufficiently small neighborhood of the family $(f_\lambda)_{\lambda\in\Db^*}$ (see \cite[Lemma 2.3]{dujardin-bif}).

Let $\widehat x$ and $\widehat y=(y_i)_{i\leq0}$ be in the natural extension of $\widetilde\Lambda_s(f)$ and $\Lambda_s(f)$ respectively. Let $\sigma$ be a small holomorphic disc transverse to $W^s_{x_0}$ near $x_0.$ As we have seen in the proof of Proposition \ref{prop-double-blender}, $W^u_{\widehat y}$ contains a vertical graph in $H_j\times V^l_j$ which is tangent to $C_{\delta_1}.$ Since $\widehat f$ acts transitively on the latter, it follows from the shadowing lemma (see \cite[Paper I, Theorem 2.4]{jonsson-phd}) that there exist a periodic point $\widehat z$ in the natural extension of $\widetilde\Lambda_s(f)$ and $n_0\geq0$ such that $\widehat z=(z_i)_{i\leq0}$ is close to $\widehat x$ and $\widehat f^{n_0}(\widehat z)$ is close to $\widehat y.$ By the continuity of the stable and unstable manifolds (see \cite[Paper I, Theorem 1.2]{jonsson-phd}), $\widehat z$ can be chosen such that $W^u_{\widehat f^{n_0}(\widehat z)}$ contains a vertical graph in $H_j\times V^l_j$ which is tangent to $C_{\delta_1}$ and $\sigma$ is transverse to $W^s_{z_0}$ near $z_0.$ By the inclination lemma (\cite[Paper I, Proposition 1.3]{jonsson-phd}) there exists $n_1\geq0$ such that $f^{n_1}(\sigma)$ also contains a vertical graph in $H_j\times V^l_j$ which is tangent to $C_{\delta_1}$ and thus by Proposition \ref{prop-inter-robuste} $f^{n_1}(\sigma)\cap\Lambda_r(f)\neq\varnothing.$ But $\Lambda_r(f)\subset\Jc_2(f)$ which is totally invariant so $\sigma\cap\Jc_2(f)\neq\varnothing.$ As $\Jc_2(f)$ is closed and $\sigma$ was an arbitrarily small disc transverse to the local stable manifold of $x_0$ we have that this local stable manifold is contained in $\Jc_2(f).$ Again, the total invariance of $\Jc_2(f)$ ensures that $W^s_{x_0}\subset\Jc_2(f).$

If $Z:=\cup_{i=1}^34^{-1}\Db\times U_i^l$ then by the construction of the blender of saddle type we have $\overline Z\subset f(Z).$ Hence, \cite[Paper I, Corollary 2.6]{jonsson-phd} implies $Z\subset W^u_{\widetilde\Lambda_s(f)}$ and so $\cup_{n\geq0}f^n(Z)\subset W^u_{\widetilde\Lambda_s(f)}.$ On the other hand, we have just seen that $Z$ intersects $\Jc_2(f)$ so by \cite{ds-allupoly}, if $\cali E(f)$ denotes the exceptional set of $f,$ which is a totally invariant critical analytic set, then $\Pb^2\setminus\mathcal E(f)\subset\cup_{n\geq0}f^n(Z)\subset W^u_{\widetilde\Lambda_s(f)}.$ But if $\Omega$ is sufficiently small then for each $f\in\Omega,$ $\widetilde\Lambda_s(f)$ is disjoint from the critical set so $\widetilde\Lambda_s(f)\cap\mathcal E(f)=\varnothing$ and therefore $W^u_{\widetilde\Lambda_s(f)}\cap\mathcal E(f)=\varnothing,$ i.e. $W^u_{\widetilde\Lambda_s(f)}$ is equal to the Zariski open set $\Pb^2\setminus\mathcal E(f).$ This also implies that $\overline{W_{\widehat x}^u}=\Pb^2$ for each $\widehat x$ in the natural extension of $\widetilde\Lambda_s(f)$ since classical results from hyperbolic dynamics imply that as $\widehat f$ is topologically mixing, we have $\overline{W^u_{\widetilde\Lambda_s(f)}}=\overline{W_{\widehat x}^u}.$

It remains to prove that $\Omega\subset\bif(\Hc_d(\Pb^2)).$ There are two steps. First we show that the postcritical set intersects $\Lambda_r(f)$ and then that this intersection is proper. The result will then follows from Proposition \ref{prop-ouvert-de-bif}. In both cases, we will use to family $(f_\lambda)_{\lambda\in\Db^*}.$

Let $\lambda\in\Db^*.$ Since $f_\lambda$ is a skew product, if $x$ is a periodic point in $\widetilde\Lambda_s(f_\lambda)$ then the stable manifold $W^s_x$ corresponds to an attractive basin in the fiber associated to $x.$ Hence, by a classical result of Fatou this basin must contain a critical point i.e. the critical set $C(f_\lambda)$ intersects $W^s_x.$ Moreover, since the periodic points are dense (and non-isolated) in $\widetilde\Lambda_s(f_\lambda)$ and that all stable manifolds are contained in horizontal fibers, there exists a periodic point $x$ such that the intersection between $C(f_\lambda)$ and $W^s_x$ is transverse. Hence, by the same arguments than above, the inclination lemma implies that the postcritical set of $f_\lambda$ intersects in a robust way the repelling blender $\Lambda_r(f_\lambda).$ Since this holds for all $\lambda\in\Db^*,$ if $\Omega\subset\Hc_d(\Pb^2)$ is a sufficiently small connected neighborhood of the family $(f_\lambda)_{\lambda\in\Db^*}$ then for each $f\in\Omega$ its postcritical set $P(f)$ intersects $\Lambda_r(f)\subset\widetilde\Lambda_r(f).$ Finally, $P(f)$ has to intersects properly $\widetilde\Lambda_r(f).$ Indeed, otherwise this intersection could be followed in the family $(f_\lambda)_{\lambda\in\Db^*}$ and thus in the conjugated family $(g_\lambda)_{\lambda\in\Db^*}$ where $g_\lambda:=\phi_\lambda\circ f_\lambda\circ\phi_\lambda^{-1}$ with $\phi_\lambda(z,w):=(\lambda z,w).$ At the limit, when $\lambda$ goes to $0,$ we should have an intersection between the postcritical set of the limit map and a limit value of the sets $\widetilde\Lambda_r(g_\lambda).$ However, as $g_\lambda(z,w)=(z(1+\alpha_1(w^3+1)/2-\alpha_2)+\lambda w(\epsilon_1(w^3+1)/2-\epsilon_2)+\lambda\eta_0q^l(z),q^l(w)),$ the limit map is $g_0(z,w):=(z(1+\alpha_1(w^3+1)/2-\alpha_2),q^l(w))$ and a simple computation shows that, under the assumption $\alpha_2<\alpha_1<1/10,$ its postcritical set doesn't intersect $\{(0,w)\in\Cb^2\,|\, 1/2<|w|<2\}$ which contains all the possible limit values of the sets $\widetilde\Lambda_r(g_\lambda)$ when $\lambda$ goes to $0.$
\end{proof}

\begin{remark}\label{rk-cycle}
The approach to obtain an open set of bifurcations in the proof above can be generalized in the following way. Assume that $f$ is a map with a blender of repelling type $\Lambda_r$ and a saddle periodic point $x$ such that its unstable manifold $W_x^u$ intersects $\Lambda_r$ and is tangent to the cone field of $\Lambda_r$ near this intersection. By a result of Robertson \cite{robertson}, the critical set intersects the stable manifold $W^s_x.$ If the intersection is transverse then as in the above proof the postcritical set must intersect $\Lambda_r$ in a robust way. And if this new intersection is proper in the sense of Proposition \ref{prop-ouvert-de-bif} then it gives an open set of bifurcations. However, these two assumptions on the intersections seem difficult to check in general.
\end{remark}

\section{Hénon perturbations}\label{sec-henon}
In this short section we consider small perturbations of polynomial automorphisms of $\Cb^2.$ It turns out that they can give rise to proper attracting sets with repelling points and to infinite sequences of nested attracting sets. The first point is a simple observation and the second one comes easily from the works of Gavosto \cite{gavosto} and Buzzard \cite{buzzard} on the Newhouse phenomenon in the holomorphic setting. It turns out that in \cite{dujardin-bif} Dujardin uses the same kind of examples to show that the Newhouse phenomenon is compatible with stability in the sense of \cite{bbd-bif}.

In \cite{buzzard}, Buzzard obtains a volume preserving polynomial automorphism $f_0$ of $\Cb^2$ with a persistent homoclinic tangency. Moreover, using \cite[Theorem 4.1]{gavosto} he proves that if $f_\epsilon$ is a small perturbation of $f_0$ which is volume decreasing near the tangency  then arbitrarily close to $f_\epsilon$ there exists a map with infinitely many attracting cycles. Actually, using exactly the same arguments but with a volume increasing perturbation $f_\epsilon,$ we obtain a map $f$ close to $f_\epsilon$ with infinitely many repelling cycles near the tangency. We will see that if $f$ is close enough to $f_0$ then all these cycles belong to a proper attracting set $A$ of $f$ and each time we remove to $A$ one of these cycles, we obtain a smaller attracting set. Notice that this set $A$ is the counterpart to (the closure of) the standard set $K^-(f_0)=\{(z,w)\in\Cb^2\,|\, (f_0^{-n}(z,w))_{n\geq0} \text{ is bounded}\}$ associated to $f_0,$ (see also Remark \ref{rk-henon}).

\begin{proof}[Proof of Theorem \ref{th-pertu}]
The maps with persistent homoclinic tangencies obtained by Buzzard are of the form $f_0:=F_3\circ F_2\circ F_1$ with $F_1(z,w)=(z+g_1(w),w),$ $F_2(z,w)=(z,w+g_2(z))$ and $F_3(z,w)=(cz,c^{-1}w)$ for some polynomials $g_1$ and $g_2$ of the same degree $d$ and  some $0<c<1.$ As observed by Dujardin in \cite{dujardin-bif}, $f_0$ can also be seen as the composition of two Hénon maps $h^\pm$ of the form $h^\pm(z,w)=(w,c^{\pm1}z+p^\pm(w))$ where $p^+$ and $p^-$ are two polynomials of degree $d.$ It is a standard fact about Hénon maps that if $R>0$ is large enough then the sets 
$$V^+:=\{(z,w)\in\Cb^2\,|\,|w|\geq\max(R,|z|)\}\ \text{ and }\ V^-:=\{(z,w)\in\Cb^2\,|\,|z|\geq\max(R,|w|)\}$$
satisfy $h^\pm(V^+)\subset V^+$ and $(h^\pm)^{-1}(V^-)\subset V^-.$ Indeed, with the same proof one can obtain that if $R>0$ is large enough then $(h^\pm)^{-1}(V^-)\subset W^-$ where $W^-:=\{(z,w)\in\C^2\,|\, |z|\geq2\max(R,|w|)\}.$ Moreover, seen as rational maps of $\Pb^2$ both $h^\pm$ have as unique indeterminacy point $I(h^\pm)=[1:0:0]$ and contract the line at infinity (minus $I(h^\pm)$) in a single point $[0:1:0].$ We deduce from these observations that $(h^\pm)(\Pb^2\setminus\overline{W^-})\subset\Pb^2\setminus\overline{V^-}.$ Hence, $U:=\Pb^2\setminus\overline{W^-}$ is a trapping region for $h^\pm$ and thus for $f_0$ and its small perturbations.

On the other hand, the homoclinic tangency of $f_0$ has to belong to the bidisc $V:=\{(z,w)\in\Cb^2\,|\, \max(|z|,|w|)<R\}.$ We choose a small perturbation $f_\epsilon$ of $f_0$ such that $f_\epsilon$ is an endomorphism of $\Pb^2$ which is volume increasing on $V,$ injective on $f^{-1}_\epsilon(V)\cap U$ and such that $\overline{f_\epsilon(U)}\subset U.$ For example, the map $f_\epsilon=h^-_\epsilon\circ h^+_\epsilon$ with $h_\epsilon^-(z,w)=(w+a_\epsilon z^d,c^{-1}z+p^-(w))$ and $h_\epsilon^+(z,w)=(w+a_\epsilon z^d,(c+\epsilon)z+p^+(w))$ where we first choose $\epsilon>0$ small and then $a_\epsilon\neq0$ very small with respect to $\epsilon$ is appropriate. Moreover, if $\epsilon$ is small enough then the results of \cite{buzzard} and \cite{gavosto} yield the existence of an endomorphism $f$ of $\Pb^2$ arbitrarily close to $f_\epsilon$ with infinitely many repelling cycles in $V\subset U.$ In particular, $U$ is also a trapping region for $f$ and $f$ is injective on $f^{-1}(V)\cap U.$ Therefore, $A:=\bigcap_{k\geq0}f^k(U)$ is a proper attracting set with infinitely many repelling cycles. Let $(C_n)_{n\geq0}$ denote the sequence of these repelling cycles in $A.$ The injectivity of $f$ on $f^{-1}(V)\cap U$ implies two facts. First, a repelling periodic point cannot be accumulated by other periodic points. In particular, there exists a family of disjoint neighborhoods $B_n\subset A\cap V$ of $C_n$ such that $\overline{B_n}\subset f(B_n).$ The second point implied by the injectivity of $f$ is that the open sets defined inductively by $U_0:=U$ and $U_{n+1}:=U_n\setminus\overline{B_n}$ are trapping regions. The associated attracting sets $A_n:=\bigcap_{k\geq0}f^k(U_n)$ satisfy $A_{n+1}\subsetneq A_n.$ Hence, $A_\infty=\bigcap_{n\geq0} A_n$ is a quasi-attractor. Moreover, $A_\infty$ cannot be an attracting set since any neighborhood $U_\infty$ of $A_\infty$ such that $f(U_\infty)\subset U_\infty$ must contains infinitely repelling cycles $C_n$ and thus $C_n\subset\bigcap_{k\geq0}f^k(U_\infty)\setminus A_\infty.$ 
\end{proof}
\begin{remark}\label{rk-uncountable}
Actually, the construction above leads to uncountably many different quasi-attractors. Let $X$ be an infinite subset of $\Nb.$ Then $A_X:=\cap_{n\in X}A_n$ is a quasi-attractor which is not an attracting set. And two different infinite subsets $X$ and $X'$ give different quasi-attractors $A_X$ and $A_{X'}.$
\end{remark}
\begin{remark}
Notice that in the situation above, the results of Dinh \cite{d-attractor} on attracting sets and attracting currents do not apply directly to the sets $A_n$ with $n\geq1$ and a fortiori to the quasi-attractor $A_\infty.$ On the other hand, we obtain in \cite{t-attractor} the existence of an attracting current $\tau$ supported on $A_\infty.$
\end{remark}

\begin{remark}\label{rk-henon}
Complex Hénon mappings have been extensively studied (see e.g. \cite{bs-1} and \cite{bls-4}). In particular, one can associate to such a map filled Julia sets $K^+,$ $K^-,$ $K=K^+\cap K^-$ and Julia sets $J^\pm=\partial K^\pm,$ $J=J^+\cap J^-.$ Moreover, there exist two positive closed $(1,1)$-currents $\mu^+$ and $\mu^-$ with $\supp(\mu^\pm)=J^\pm.$ The measure $\mu:=\mu^+\wedge\mu^-$ is mixing, hyperbolic, supported in $J,$ the saddle periodic points equidistribute toward it and it is the unique measure of maximal entropy $\log(d).$ As we have seen, we can perturb such a map in order to have an endomorphism $f$ of $\Pb^2$ which possesses an attracting set $A.$ The map $f$ has a Green current $T$ whose support is exactly the Julia set $\Jc_1(f)$ and there exists an attracting current $\tau$ supported in $A.$ There is a strong analogy between these objects. The currents $T$ and $\tau$ correspond respectively to $\mu^+$ and $\mu^-,$ the sets $J^+$ and $K^-$ correspond to $\Jc_1(f)$ and $A.$ Moreover, in this situation the combination of the results of \cite{d-attractor}, \cite{da-ta} and \cite{daurat-2} gives that the measure $\nu:=T\wedge\tau$ is mixing, hyperbolic, the saddle periodic points equidistribute toward it and it is the unique measure supported in $A$ of maximal entropy $\log(d)$ i.e. it can be seen as the continuation of $\mu$ for $f.$
\end{remark}


\bibliographystyle{alpha} 
\newcommand{\etalchar}[1]{$^{#1}$}

\noindent
Universit\'e de Bourgogne Franche-Comt\'e, IMB, UMR CNRS 5584, 21078 Dijon Cedex, France. {\tt e-mail:johan.taflin$@$u-bourgogne.fr}

\end{document}